\renewcommand{\div}{\operatorname{div}}
\newcommand{\Hh}{{\overline{H}}}
\def\leq{\leqslant}
\def\geq{\geqslant}
\numberwithin{equation}{section}
\newtheoremstyle{thmlemcorr}{10pt}{10pt}{\itshape}{}{\bfseries}{.}{10pt}{{\thmname{#1}\thmnumber{
#2}\thmnote{ (#3)}}}
\newtheoremstyle{thmlemcorr*}{10pt}{10pt}{\itshape}{}{\bfseries}{.}\newline{{\thmname{#1}\thmnumber{
\newtheoremstyle{defi}{10pt}{10pt}{\itshape}{}{\bfseries}{.}{10pt}{{\thmname{#1}\thmnumber{
#2}\thmnote{ (#3)}}}
\newtheoremstyle{remexample}{10pt}{10pt}{}{}{\bfseries}{.}{10pt}{{\thmname{#1}\thmnumber{
#2}\thmnote{ (#3)}}}
\newtheoremstyle{ass}{10pt}{10pt}{}{}{\bfseries}{.}{10pt}{{\thmname{#1}\thmnumber{
A#2}\thmnote{ (#3)}}}
\theoremstyle{thmlemcorr}
\newtheorem{theorem}{Theorem}
\numberwithin{theorem}{section}
\newtheorem{lemma}[theorem]{Lemma}
\theoremstyle{thmlemcorr*}
\newtheorem{theorem*}{Theorem}
\newtheorem{lemma*}[theorem]{Lemma}
\newtheorem{corollary*}[theorem]{Corollary}
\newtheorem{proposition*}[theorem]{Proposition}
\newtheorem{problem*}[theorem]{Problem}
\newtheorem{conjecture*}[theorem]{Conjecture}
\theoremstyle{defi}
\newtheorem{hyp}{Assumption}
\theoremstyle{remexample}
\newtheorem{remark}[theorem]{Remark}
\newtheorem{example}[theorem]{Example}
\newtheorem{pro}[theorem]{Proposition}
\theoremstyle{ass}
\newtheorem*{notations*}{Notation}
\begin{document}

\title[Hessian Riemannian and Effective Hamiltonians and Mather measures]{The Hessian Riemannian flow and Newton's method for Effective Hamiltonians and Mather measures}

\author{Diogo A. Gomes}
\address[D. A. Gomes]{
        King Abdullah University of Science and Technology (KAUST), CEMSE Division, Thuwal 23955-6900. Saudi Arabia, and  
        KAUST SRI, Center for Uncertainty Quantification in Computational Science and Engineering.}
\email{diogo.gomes@kaust.edu.sa}
\author{Xianjin Yang}
\address[X. Yang]{
        King Abdullah University of Science and Technology (KAUST), CEMSE Division, Thuwal 23955-6900. Saudi Arabia, and  
        KAUST SRI, Center for Uncertainty Quantification in Computational Science and Engineering.}
\email{xianjin.yang@kaust.edu.sa}

\keywords{Mean Field Game; Effective Hamiltonian; Mather measure}
\subjclass[2010]{
        65M22, 
        35F21,
    	35B27} 

\thanks{
        The authors were supported by King Abdullah University of Science and Technology (KAUST) baseline funds and KAUST OSR-CRG2017-3452.
}
\date{\today}

\begin{abstract}
Effective Hamiltonians arise in several problems, including homogenization of Hamilton--Jacobi equations, nonlinear control systems, Hamiltonian dynamics, and Aubry--Mather theory. In Aubry--Mather theory, related objects, Mather measures, are also of great importance. Here, we combine ideas from mean-field games with the Hessian Riemannian flow to compute effective Hamiltonians and Mather measures simultaneously. {We prove the convergence of the Hessian Riemannian flow in the continuous setting. For the discrete case, we give both the existence and the convergence of the Hessian Riemannian flow.} In addition, we explore a variant of Newton's method
that greatly improves the performance of the Hessian Riemannian flow.  In our numerical experiments, we see that our algorithms preserve the non-negativity of Mather measures and are more stable than {related} methods in problems that are close to singular. Furthermore, our method also provides a way to approximate stationary MFGs.
\end{abstract}

\maketitle

\section{Introduction}
Let $\mathbb{T}^d$ be the unit $d$-dimensional torus. Given $P\in\mathbb{R}^d$ and a {smooth} Hamiltonian $H: \mathbb{T}^d\times\mathbb{R}^d\rightarrow\mathbb{R}$, { which is coercive in the second variable,} {the effective Hamiltonian $\overline{H}(P)$ is the unique real number for which there is a periodic viscosity solution $u: \mathbb{T}^d\rightarrow \mathbb{R}$ of the Hamilton--Jacobi equation}
\begin{equation}
\label{HJ}
H(x, P+D_xu)=\overline{H}(P), \quad x\in \mathbb{T}^d.
\end{equation}
This problem, sometimes called {\em the cell problem} \cite{LPV}, appears in several applications, including homogenization of Hamilton--Jacobi equations \cite{LPV}, front propagation \cite{majda1994large}, Bloch wave-form expansion and WKB approximation of the Schr\"{o}dinger equation \cite{evans2004towards, EGom1}, homogenization of an integral function \cite{weinan1991class}, Aubry--Mather theory \cite{jauslin1999forced, weinan1999aubry}, nonlinear control systems \cite{capuzzo2001rate}, Hamiltonian dynamics \cite{weinan1999aubry, EGom1, EGom2, gomes2003regularity, evans2004towards} and in the study of the long-time behavior of Hamilton--Jacobi equations \cite{barles2000large}. 

For $H(x,p)$ continuous, periodic in $x$ and coercive in $p$, a well-known result in \cite{LPV} gives 
the existence and  uniqueness of $\overline{H}(P)$. However, explicit solutions of \eqref{HJ} are hard to find. Thus,  efficient numerical algorithms are of great interest. {Here, we consider convex  Hamiltonians that satisfy additional growth bounds. More precisely, we suppose that $H$ satisfies: 
\begin{hyp}
	\label{HStronglyConvex}
	The Hamiltonian, $H: \mathbb{T}^d\times \mathbb{R}^d\rightarrow \mathbb{R}$,  is strictly convex in $p$. More precisely, there exists a constant $\rho>0$ such that
	\begin{equation*}
	H_{p_ip_j}(x,p)\xi_i\xi_j\geq \rho |\xi|^2
	\end{equation*}
	for {$p,\xi\in \mathbb{R}^d$ and $x\in \mathbb{T}^d$. }
\end{hyp}
\begin{hyp}
	\label{GrowthBounds}
	{The Hamiltonian $H$ satisfies }
	\begin{align*}
	\begin{split}
	|D_p^2H(x,p)| &\leq C,\\
	|D_{x,p}^2H(x,p)| &\leq C(1+|p|),
	\end{split}
	\end{align*}
and
	\begin{align*}
	|D_x^2H(x,p)| &\leq C(1+|p|^2)
	\end{align*}
	for some constant $C>0$.
\end{hyp}
}

As discussed in Section \ref{previouswork},  {related} methods, except for the one in \cite{falcone2008variational},  compute only $\overline{H}$ and $u$.  However, in Aubry--Mather theory, in addition to $\overline{H}$ and $u$, it is also critical to compute related objects, 
Mather measures, see \cite{Ma, Mane2} or the survey \cite{biryuk2010introduction}. {Given a Tonelli Lagrangian   $L$ and $P\in \mathbb{R}^d$,  a {\em Mather measure} is a probability measure  $\mu\in \mathcal{P}\left(\mathbb{T}^d\times \mathbb{R}^d\right)$  that minimizes}
\begin{equation}
\label{MatherProblem}
\int_{\mathbb{T}^d\times\mathbb{R}^d}\left(L(x,v)+P\cdot v\right)d\mu(x,v),
\end{equation}
among all probability measures that satisfy the following {holonomic} constraint
\begin{equation*}
\int_{\mathbb{T}^d\times\mathbb{R}^d}\left(v\cdot \nabla\varphi\right) d\mu=0, \quad \forall  \varphi\in {C}^1(\mathbb{T}^d). 
\end{equation*}
Let $H$ be the Legendre transform of $L$,  $$H(x,p)=\sup\limits_{v}\left(p\cdot v-L(x,v)\right).$$ If $\overline{H}$ and $u$ solve \eqref{HJ}, the infimum of \eqref{MatherProblem} is $-\overline{H}(P)$, $\mu$ is supported on the graph $\left(x, -D_pH(x,P+D_xu(x))\right)$ and {is uniquely determined by its $x$-projection, denoted by $m$, which is a weak solution of}
\begin{equation}
\label{FokkerPlank}
{-\div(mD_pH(x,P+D_xu))=0, \quad x\in \mathbb{T}^d.}
\end{equation}

{Here, we seek to compute $u, \overline{H},$ and {the projected Mather measure $m$  simultaneously}. More precisely, we want to solve numerically for $(u,m,\overline{H})$ the system
	\begin{equation}
	\label{HJFPTogether}
	\begin{cases}
	H(x,P+D_xu)=\overline{H}(P), & \text{in}\ \mathbb{T}^d,\\
	-\div(mD_pH)=0, & \text{in}\ \mathbb{T}^d. 
	\end{cases}
	\end{equation} 
	Another motivation to solve $(u,m,\overline{H})$ together is that \eqref{HJFPTogether} resembles the following first-order stationary mean-field game (MFG): 
	\begin{equation}
	\label{MFG}
	\begin{cases}
	H(x,P+D_xu)=\Hh(P)+g(m), & \text{in}\ \mathbb{T}^d, \\
	-\div(mD_pH)=0, & \text{in}\ \mathbb{T}^d,  
	\end{cases}
	\end{equation}
	where $g: \mathbb{R}\rightarrow \mathbb{R}$ is a given increasing  function. Mean-field games model the behavior of rational and indistinguishable agents in a large population {(see  \cite{ll1, ll2})}. In \eqref{MFG}, 
	$u$ determines the cost for an agent at $x\in\mathbb{T}^d$, $m$ is a probability density that gives the agents' distributions, and $g$ determines the interaction between agents. When $g=0$, \eqref{MFG} reduces to \eqref{HJFPTogether}, $\Hh(P)$ is the effective Hamiltonian, and $m$ represents the projected Mather measure. 
	
	As pointed out in Example \ref{UniquenessExample} of Section \ref{section2}, the projected Mather measure in \eqref{HJFPTogether} is not unique. However, from the theory of MFGs, the uniqueness of $m$ in \eqref{MFG} is guaranteed if $g$ is increasing. Thus, we use \eqref{MFG} as an approximation to \eqref{HJFPTogether}}.

More precisely, we consider {the} following MFG that  arises in the study of  entropy penalized Mather measures  \cite{evans2003some}: 
\begin{equation}
\label{ApproxiMFG}
\begin{cases}
H(x,P+D_xu^k)=\overline{H}^k\left(P\right)+\frac{1}{k}\ln m^k \\
-\div(m^kD_pH(x,P+D_xu^k))=0,
\end{cases}
\end{equation}
where $k>0$ is an integer and  
\begin{equation}
\label{HBark}
\overline{H}^k(P)=\frac{1}{k}\ln \left(\int_{\mathbb{T}^d}e^{kH(x,P+D_xu^k)}dx\right).
\end{equation}
Under Assumptions \ref{HStronglyConvex} and \ref{GrowthBounds}, as $k\rightarrow \infty$, {$\overline{H}^k(P)$ converges to $\overline{H}(P)$}, a subsequence of $u^k$ converges to a viscosity subsolution of \eqref{HJ}, and if, up to a subsequence, $m^k$ converges to $m$, then $m$ is  the projected Mather measure \cite{evans2003some}. The convergence of $u^k$ and $m^k$ is not guaranteed, see for example the discussion in \cite{GIMY}. Here, we develop a numerical algorithm to solve \eqref{ApproxiMFG} and study numerically the convergence of $u^k$ and $m^k$ as $k\rightarrow+\infty$.

To solve \eqref{ApproxiMFG}, we construct the Hessian Riemannian flow that preserves the non-negativity of $m$. More precisely, we consider the system of PDEs:
\begin{align} 
\label{ApproxHessianMonotoneFlow}
\begin{bmatrix}
\dot{\boldsymbol{m}}\\\dot{\boldsymbol{u}}
\end{bmatrix}=-\begin{bmatrix}
\boldsymbol{m}\left(-H(x,P+D_x\boldsymbol{u})+{\boldsymbol{\overline{H}}(P)}+\frac{1}{k}\ln \boldsymbol{m}\right)\\
-\div(D_pH(x,P+D_x\boldsymbol{u})\boldsymbol{m})
\end{bmatrix},
\end{align}
where
\begin{equation}
\label{BoldHBarFirstAppear}
{\boldsymbol{\overline{H}}(P)}=\frac{\int_{\mathbb{T}^d}\left(\boldsymbol{m}H(x,P+D_x\boldsymbol{u})-\frac{1}{k}\boldsymbol{m}\ln \boldsymbol{m}\right)dx}{\int_{\mathbb{T}^d}\boldsymbol{m}}.
\end{equation}
{We note that   $\boldsymbol{m}$, $\boldsymbol{u}$, and $\boldsymbol{\overline{H}}(P)$ depends on the choice of $k$. }
In Section \ref{section2}, we establish the following convergence theorem. {For the notation, we refer the reader to the end of Section \ref{previouswork}.}
\begin{theorem}
	\label{ConvergenceApproxiU}
	Suppose that Assumptions \ref{HStronglyConvex} and  \ref{GrowthBounds} hold, and that   \eqref{ApproxHessianMonotoneFlow} {has a solution {$\left(\boldsymbol{m}, \boldsymbol{u}\right)$} such that}  {$\left(\boldsymbol{m}, \boldsymbol{u}\right)\in {C}^1\left([0,\infty);C^1_+(\mathbb{T}^d)\times C^2(\mathbb{T}^d)\right)$}. {Let} $(m^*,u^*)$ {be} the  smooth solution of \eqref{ApproxiMFG}. Then, there exists a sequence $\{t_i\}$ such that   $\boldsymbol{u}(t_i)\rightarrow u^*$ in  $W^{1,2}\left({\mathbb{T}^d}\right)$ as $i\rightarrow +\infty$. Moreover, we have $\boldsymbol{u}(t)\rightarrow u^*$ in $L^2\left(\mathbb{T}^d\right)$ and $\boldsymbol{m}(t)\rightarrow m^*$ in $L^1(\mathbb{T}^d)$, as $t\rightarrow \infty$.
\end{theorem}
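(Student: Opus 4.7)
The plan is to recognize the flow \eqref{ApproxHessianMonotoneFlow} as a Hessian--Riemannian saddle-point flow for the convex--concave functional
\[
F(m,u) = \int_{\mathbb{T}^d}\!\left( m\, H(x,P+D_xu) - \tfrac{1}{k}\, m\ln m\right) dx,\qquad \textstyle\int m\,dx=1,
\]
namely $L^2$-gradient descent in $u$ (one checks $\dot{\boldsymbol{u}} = \div(\boldsymbol{m}D_pH) = -\nabla_u F$) and Fisher--Rao gradient ascent in $m$ on the simplex (since $\dot{\boldsymbol{m}} = \boldsymbol{m}(\nabla_m F - \lambda)$ with the mass-conservation multiplier $\lambda = \overline{H} - \tfrac{1}{k}$ forced by \eqref{BoldHBarFirstAppear}; the same identity gives $\int \boldsymbol{m}A\,dx=0$ for $A := H-\overline{H}-\tfrac{1}{k}\ln\boldsymbol{m}$, so $\int \boldsymbol{m}\,dx$ is conserved). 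By strict convexity of $H$ in $p$ (Assumption~\ref{HStronglyConvex}) and strict concavity of $-\tfrac{1}{k}m\ln m$, the saddle is unique and equal to the smooth solution $(m^*,u^*)$ of \eqref{ApproxiMFG}. This casts the theorem as a standard convergence question for convex--concave Hessian flows.

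After fixing the additive constant in $\boldsymbol{u}$ by $\int\boldsymbol{u}\,dx = \int u^*\,dx$, the candidate Lyapunov functional is
\[
V(t) \;=\; \tfrac{1}{2}\|\boldsymbol{u}(t) - u^*\|_{L^2}^2 \;+\; \mathrm{KL}(m^*\,\|\,\boldsymbol{m}(t)) \;\ge\; 0.
\]
Convexity of $F$ in $u$ gives $\tfrac{d}{dt}V_u \le F(\boldsymbol{m},u^*) - F(\boldsymbol{m},\boldsymbol{u})$. The Fisher--Rao calculation combined with the exact Bregman identity for the entropy yields $\tfrac{d}{dt}\mathrm{KL}(m^*\|\boldsymbol{m}) = -\int m^* A\,dx = F(\boldsymbol{m},\boldsymbol{u}) - F(m^*,\boldsymbol{u}) - \tfrac{1}{k}\mathrm{KL}(m^*\|\boldsymbol{m})$. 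Summing and then strengthening the saddle-point bound $F(\boldsymbol{m},u^*) - F(m^*,\boldsymbol{u}) \le 0$ through the strong convexity of $H$ in $p$ and the strict concavity of the entropy (using also $\nabla_u F(m^*,u^*) = -\div(m^*D_pH^*) = 0$ and $\nabla_m F(m^*,u^*)$ constant on $\mathbb{T}^d$) gives the coercive dissipation
\[
\dot V(t) \le -\tfrac{1}{k}\mathrm{KL}(m^*\|\boldsymbol{m}) - \tfrac{1}{k}\mathrm{KL}(\boldsymbol{m}\|m^*) - \tfrac{\rho}{2}\!\int_{\mathbb{T}^d}\!(\boldsymbol{m}+m^*)\,|D_x\boldsymbol{u}-D_xu^*|^2\, dx.
\]

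Integrating in $t$, $V$ is bounded and the right-hand side is in $L^1(0,\infty)$, so there exists $t_i \to \infty$ along which $\int m^*|D_x\boldsymbol{u}(t_i)-D_xu^*|^2\,dx \to 0$ and $\mathrm{KL}(\boldsymbol{m}(t_i)\|m^*)\to 0$. Since $\inf m^* > 0$ on the compact torus ($m^*\in C^1_+$), the first gives $D_x\boldsymbol{u}(t_i)\to D_xu^*$ in $L^2$; Pinsker's inequality gives $\boldsymbol{m}(t_i)\to m^*$ in $L^1$; combined with the Poincaré inequality under the mean-zero normalization, $\boldsymbol{u}(t_i)\to u^*$ in $W^{1,2}$. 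For the full-family convergence, the bound $\dot V \le -\tfrac{1}{k}\mathrm{KL}(m^*\|\boldsymbol{m})$ together with Poincaré control of $V_u$ by $\int m^*|D_x\boldsymbol{u}-D_xu^*|^2\, dx$ shows that $V(t)\to 0$ monotonically (alternatively, a LaSalle invariance argument identifies the $\omega$-limit set as $\{(m^*,u^*)\}$ by uniqueness of the smooth MFG solution). Hence $\|\boldsymbol{u}(t)-u^*\|_{L^2}\to 0$ and, by Pinsker, $\|\boldsymbol{m}(t)-m^*\|_{L^1}\to 0$.

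The principal difficulty is the dissipation inequality itself: the exact cancellation in the $m$-derivative of $V$ hinges on $\int \boldsymbol{m}A\,dx = 0$, which is precisely why $\overline{H}$ is defined by \eqref{BoldHBarFirstAppear}. A secondary obstacle is the asymmetry between the \emph{forward} divergence $\mathrm{KL}(m^*\|\boldsymbol{m})$ that drives $V$ and the \emph{reverse} divergence $\mathrm{KL}(\boldsymbol{m}\|m^*)$ naturally controlled by the dissipation; this mismatch is exactly what separates the subsequential $W^{1,2}$-convergence of $\boldsymbol{u}$ from the full $L^2$- and $L^1$-convergences. Justifying the integration by parts and the passage to the limit uses the regularity $(\boldsymbol{m},\boldsymbol{u}) \in C^1([0,\infty);C^1_+\times C^2)$ and Assumption~\ref{GrowthBounds}.
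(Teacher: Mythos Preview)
Your approach is correct and, at its core, coincides with the paper's: both use the Lyapunov functional
\[
V(t)=\tfrac{1}{2}\|\boldsymbol{u}(t)-u^*\|_{L^2}^2+\mathrm{KL}(m^*\|\boldsymbol{m}(t))
\]
and derive the same dissipation inequality (the paper writes the entropy term as $\tfrac{1}{k}\int(\ln m^*-\ln\boldsymbol{m})(m^*-\boldsymbol{m})$, which equals your $\tfrac{1}{k}[\mathrm{KL}(m^*\|\boldsymbol{m})+\mathrm{KL}(\boldsymbol{m}\|m^*)]$). The paper obtains this by a direct computation of $\dot\phi$ using the monotone-operator structure, whereas you package the same calculation as a convex--concave saddle-point identity for $F(m,u)=\int(mH-\tfrac{1}{k}m\ln m)$; these are two presentations of one computation.

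There are two genuine, if minor, differences in the endgame. First, for the $L^1$-convergence of $\boldsymbol{m}$ the paper proves an ad hoc inequality (its Lemma~3.4) that bounds $|m^*-\boldsymbol{m}|$ by the Bregman remainder $m^*\ln(m^*/\boldsymbol{m})-(m^*-\boldsymbol{m})$; your appeal to Pinsker's inequality is the standard and quicker route to the same conclusion. Second, for the full-limit statement the paper argues ``subsequence $\phi(t_i)\to 0$ plus monotonicity of $\phi$'', while your line ``$\dot V\le -\tfrac{1}{k}\mathrm{KL}(m^*\|\boldsymbol{m})$ together with Poincar\'e control of $V_u$'' actually yields the stronger Gronwall bound $\dot V\le -\alpha V$ (using $\inf m^*>0$), hence exponential decay $V(t)\to 0$; this is cleaner than the paper's argument and gives a rate the paper does not state. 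Your parenthetical LaSalle alternative, by contrast, would require a compactness statement for the trajectory that you have not established, so it is best to rely on the Gronwall route.
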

{Note  that we do not prove that a solution to \eqref{ApproxHessianMonotoneFlow} exists. Instead, in Section \ref{NumericalScheme}, we discretize \eqref{ApproxHessianMonotoneFlow} in space 
and obtain a system of ODEs:
\begin{align}
\label{DynDiscreteHessianRiemannianFlow}
\begin{bmatrix}
\dot{M}\\\dot{U}
\end{bmatrix}=-
\overline{F}\begin{bmatrix}
M\\U
\end{bmatrix},
\end{align}
where $\left(M,U\right)\in \mathbb{R}^N\times \mathbb{R}^N$, $N$ is the number of grid points, and $\overline{F}$ is defined in 
\eqref{overlineF}. 
 Then, we show the existence and    convergence of the flow in \eqref{DynDiscreteHessianRiemannianFlow}, as stated in the next theorem. In the limit $t\rightarrow+\infty$, we obtain a stationary solution to a discretized version of \eqref{ApproxiMFG}.}  
\begin{theorem}
\label{discreteExistenceConvergence}
Suppose that Assumptions \ref{hypGConvex}-\ref{hypumass} hold (see Section \ref{NumericalScheme}).  Then, {\eqref{DynDiscreteHessianRiemannianFlow} admits a unique solution  $\left(M(t),U(t)\right)=\left(m_1(t),\dots,m_N(t),u_1(t),\dots,u_N(t)\right)$  on $[0,+\infty)$}.  {Let $(M^*,U^*)$ solve {\eqref{DMFG-EH} (see Section \ref{NumericalScheme})}, where $M^*=(m_1^*,\dots,m_N^*)$ and $U^*=(u_1^*,\dots,u_N^*)$.
Then}, for each $1\leq j \leq N$, as $t\rightarrow \infty$, we have $$u_j(t)\rightarrow u_j^* \qquad \text{and}\qquad 
m_j(t)\rightarrow m^*_j.$$
\end{theorem}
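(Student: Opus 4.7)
The plan is to combine local ODE existence and positivity preservation with a Lyapunov functional adapted to the Hessian Riemannian metric, and then extract convergence via LaSalle's invariance principle together with uniqueness of the stationary point. For local existence, the vector field $\overline{F}$ is smooth on the open set $\Omega = \{(M,U) \in \mathbb{R}^N\times\mathbb{R}^N : m_j > 0 \text{ for all } j\}$, since the only potentially singular terms come from $\ln m_j$, which is analytic on $(0,+\infty)$. Picard--Lindel\"of then produces a unique maximal $C^1$ solution starting from the strictly positive initial data supplied by Assumption \ref{hypumass}. Because the $M$-block of \eqref{DynDiscreteHessianRiemannianFlow} has the multiplicative form $\dot m_j = -m_j\,\phi_j(M,U)$ for a continuous $\phi_j$, we obtain
$$m_j(t) \;=\; m_j(0)\exp\!\left(-\int_0^t \phi_j(M(s),U(s))\,ds\right),$$
so strict positivity of $m_j$ is preserved as long as the solution exists.

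Next, I would construct a Lyapunov functional inspired by the free energy of \eqref{ApproxiMFG} together with a Bregman-type divergence from the stationary point $(M^*,U^*)$, a natural candidate being
$$\Phi(M,U) \;=\; \sum_{j=1}^N\left(m_j\ln\frac{m_j}{m_j^*} - m_j + m_j^*\right) + \tfrac12\,\|U-U^*\|_W^2,$$
where $\|\cdot\|_W$ is a weighted discrete norm matched to the finite-difference discretization of $-\div(D_pH\,m)$. The Hessian Riemannian flow is designed precisely so that $\dot\Phi \leq 0$ along solutions: the terms coupling the $M$-equation and the $U$-equation cancel through the discrete integration-by-parts (the adjointness built into $\overline{F}$), while the residual term is controlled using the strict convexity of $H$ from Assumption \ref{HStronglyConvex} together with the growth bounds in Assumption \ref{GrowthBounds}. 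Gibbs' inequality yields $\Phi\geq 0$, and its sublevel sets are bounded in $\mathbb{R}^N\times\mathbb{R}^N$ and uniformly bounded away from the boundary $\{m_j=0\}$, so the trajectory stays in a compact subset of $\Omega$ and can be continued to $[0,+\infty)$.

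Finally, for convergence I would apply LaSalle's invariance principle: the $\omega$-limit set of the relatively compact trajectory is nonempty, compact, and contained in $\{\dot\Phi=0\}$. Using the strict convexity of $H$, any point of $\{\dot\Phi=0\}$ must be a zero of $\overline{F}$, i.e.\ a solution of \eqref{DMFG-EH}. The monotonicity induced by the entropy term $\tfrac{1}{k}\ln m$ in \eqref{ApproxiMFG}, together with the normalization fixed by Assumption \ref{hypumass}, ensures that this stationary point is unique, so the $\omega$-limit set reduces to $\{(M^*,U^*)\}$ and the entire trajectory converges componentwise. The main obstacle I expect is the explicit construction of $\Phi$ and the verification that $\dot\Phi\leq 0$ with equality only at $(M^*,U^*)$: this demands choosing the Bregman divergence and the weighted norm so that the $1/m_j$ scaling coming from the Hessian Riemannian metric is respected and the discretization of $H$ preserves the monotonicity argument used in the continuous Theorem \ref{ConvergenceApproxiU}. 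Once this algebraic cancellation is identified, everything else follows from standard dynamical-systems arguments.
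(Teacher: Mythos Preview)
Your overall strategy---local existence, a Bregman/relative-entropy Lyapunov functional, and a LaSalle-type argument---is exactly the right architecture, and it matches what the paper does. But your specific Lyapunov candidate has the arguments of the Bregman divergence reversed, and this is not cosmetic: it breaks both of the two places where you need $\Phi$ to do work.

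The paper uses
\[
\overline{\phi}(t)\;=\;\frac{1}{N}\sum_{j=1}^N m_j^*\ln\frac{m_j^*}{m_j(t)}\;+\;\frac{1}{2N}\sum_{j=1}^N\bigl(u_j(t)-u_j^*\bigr)^2,
\]
i.e.\ $D_h(M^*,M)$ in the entropy part, not your $D_h(M,M^*)=\sum_j\bigl(m_j\ln\tfrac{m_j}{m_j^*}-m_j+m_j^*\bigr)$. The reason this orientation is forced is the Hessian Riemannian scaling you yourself flag: differentiating $-m_j^*\ln m_j$ produces $-\tfrac{m_j^*}{m_j}\dot m_j$, and the $1/m_j$ exactly cancels the $m_j$ in $\dot m_j=-m_j\phi_j$, leaving a term \emph{linear} in $(m_j^*-m_j)$. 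That is what lets you rewrite $\dot{\overline\phi}$ as the discrete monotonicity pairing of $\widetilde F$ and conclude $\dot{\overline\phi}\le 0$. With your $\Phi$, differentiation gives $(\ln\tfrac{m_j}{m_j^*})\dot m_j=-m_j\bigl(\ln\tfrac{m_j}{m_j^*}\bigr)\phi_j$, and the extra nonlinear factor $m_j\ln(m_j/m_j^*)$ destroys the adjointness cancellation; there is no clean route to $\dot\Phi\le 0$.

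The reversed orientation also invalidates your compactness claim. Your $\Phi$ does \emph{not} blow up as $m_j\to 0^+$: $m_j\ln(m_j/m_j^*)-m_j+m_j^*\to m_j^*<\infty$. So sublevel sets of your $\Phi$ are not bounded away from $\{m_j=0\}$, and you cannot conclude global existence from them. By contrast, $m_j^*\ln(m_j^*/m_j)\to+\infty$ as $m_j\to 0^+$, so the paper's $\overline\phi$ genuinely acts as a barrier (the paper actually argues this step slightly differently, via the ODE for $\ln m_j$, but the barrier reading of $\overline\phi$ works too).

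Two smaller points. First, the assumptions you need are the discrete ones on $G$ (Assumptions \ref{hypGConvex}--\ref{hypumass}), in particular the discrete strict-convexity Assumption \ref{hypGStrictlyConvexity} and the discrete Poincar\'e inequality it yields; Assumptions \ref{HStronglyConvex}--\ref{GrowthBounds} are about the continuous $H$ and play no role here. Second, no weighted norm on $U$ is needed---plain $\ell^2$ is what pairs correctly with $\mathcal L_U^*M$. Once you swap to $D_h(M^*,M)$ and invoke Assumption \ref{hypGStrictlyConvexity}, your LaSalle argument is a perfectly good substitute for the paper's subsequence-plus-monotonicity endgame.
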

{In Section \ref{section4}, we explore the connection of the previous flow with  
a variant of Newton's method that is equivalent to the Crank-Nicolson scheme for \eqref{DynDiscreteHessianRiemannianFlow}.
Numerical results and performance comparisons follow in Section \ref{section6}. Particularly, our methods are stable for problems that are nearly singular.} 

{We note that in Aubry--Mather theory,  Mather measures and effective Hamiltonians have also been studied in non-convex settings \cite{2016arXiv160507532G, Cagnetti2011}, where there are still many open problems. 
In particular, fast algorithms for non-convex 
problems would be extremely interesting. However, our methods 
do not seem to apply in that 
setting. Finally, we observe that our algorithm also serves as an alternative numerical algorithm for solving stationary MFGs. 
}

\section{Previous work}
\label{previouswork}
{Several} authors studied and proposed numerical methods for
the computation of effective Hamiltonians. Here, we give a brief overview of the various approaches in the literature. 

Two approaches described in \cite{qian2003two} use the asymptotic behavior of Hamilton--Jacobi equations  to compute $\overline{H}$. The first approach, called small-$\delta$ method,  {introduces a positive parameter $\delta$ and considers} the stationary equation
\begin{equation}
\label{SmallDelta}
\delta u_\delta + H(x,P+D_xu_\delta)=0, \quad x\in \mathbb{T}^d.
\end{equation}
According to \cite{LPV}, $-\delta u_\delta$ converges uniformly to $\overline{H}(P)$ on $\mathbb{R}^d$ as $\delta\rightarrow 0$. Thus, we can choose a small $\delta$ and solve \eqref{SmallDelta} numerically to get an approximation for $\overline{H}(P)$.
The second method, called in \cite{qian2003two} the large-$T$ method, uses a large-time approximation
\begin{equation}
\label{LargeT}
\begin{cases}
u_t+H(x,P+D_xu)=0 &\quad \text{in} \quad \mathbb{T}^d\times(0,\infty), \\
u=v				  &\quad \text{in} \quad \mathbb{T}^d\times\{t=0\},
\end{cases}
\end{equation}
where $v$ is a continuous, periodic function. Under suitable assumptions, \eqref{LargeT} has a unique viscosity solution on $\mathbb{T}^d\times [0,T]$, see \cite{souganidis1985existence}, and  \cite{qian2003two} established that $-u(x,t)/t\rightarrow \overline{H}(P)$ for a general, not necessarily convex,  Hamiltonian, $H$.

Alternatively, the effective Hamiltonian can be computed using a representation formula that arises
as a dual problem of an infinite-dimensional linear programming problem \cite{EGom3, G}. 
This is the idea used in  \cite{gomes2004computing}, where  $\overline{H}(P)$ is computed through the formula,
\begin{equation*}
\overline{H}(P)=\inf_{\phi\in C^1\left(\mathbb{T}^d\right)}\sup_xH\left(x,P+D_x\phi\right),
\end{equation*}
by discretizing the spatial variable and solving the minimax problem. 

The preceding approaches are slow from the computational point of view. Thus, significant efforts have been devoted to developing fast algorithms. These include solving a homogenization problem directly \cite{oberman2009homogenization, luo2011new} and employing a Newton-type method \cite{cacace2016generalized} to solve \eqref{HJ}.

{In \cite{oberman2009homogenization, luo2011new}, given a function $f$,  the authors of} \cite{luo2011new} considered the oscillatory equation
\begin{equation*}
\begin{cases}
H(Du^\epsilon,\frac{x}{\epsilon})=f(x) \quad x\in \Omega\char`\\ \{0\}\subset \mathbb{R}^d, \\
u^\epsilon(0)=0.
\end{cases}
\end{equation*}
Then, the value of $f$ at point $x_0$, which is close enough to the minimum of $u^\epsilon-P\cdot x$, yields an approximation of $\overline{H}(P)$ \cite{luo2011new}.

The generalized Newton method in \cite{cacace2016generalized} uses a novel approach to compute the effective Hamiltonian. There, {\eqref{HJ} is discretized directly into a nonlinear system  $F(X)=0$,} where $X$ encodes a discretized version of $u$ and $\overline{H}$. Then, the resulting system is solved by the Newton method.

The focus of the preceding methods is the computation of the effective Hamiltonian and the viscosity solution. Mather measures do not play a role. In contrast, the variational method in \cite{evans2003some} approximates 
the projected Mather measure and the effective Hamiltonian by
\begin{equation*}
m^k=e^{k\left(H(P+D_xu^k,x)-\overline{H}^k(P)\right)},
\end{equation*}
\begin{equation}
\label{ApproximatedEH}
{\overline{H}^k(P)=\frac{1}{k}\ln\left(\int_{\mathbb{T}^d}e^{kH(x,P+D_xu^k)dx}\right),}
\end{equation}
where $k\in\mathbb{N}$ and $u^k$ is the minimizer of 
\begin{equation}
\label{VariationalFunctional}
I_k[u^k]=\int_{\mathbb{T}^d}e^{kH(x,P+D_xu^k)}dx
\end{equation}
subject to
\begin{equation*}
\int_{\mathbb{T}^d}u^kdx=0.
\end{equation*}
We observe that  \eqref{ApproxiMFG} is the Euler-Lagrange equation corresponding to the functional in  \eqref{VariationalFunctional}. 
If $H(x,p)$ satisfy Assumptions \ref{HStronglyConvex} and \ref{GrowthBounds},
the results in \cite{evans2003some} imply that  $\overline{H}^k(P)\rightarrow \overline{H}\left(P\right)$ as $k\rightarrow \infty$.
{Inspired by this, the authors in \cite{falcone2008variational} propose a numerical method solving} the Euler-Lagrange equation of \eqref{VariationalFunctional}  by finite-difference methods and gets $\overline{H}^k\left(P\right)$ using \eqref{ApproximatedEH}. Numerical experiments in \cite{falcone2008variational} show that this approximation is more efficient than the algorithm in \cite{gomes2004computing} but with less accuracy. However, as pointed out in \cite{falcone2008variational}, this scheme is unstable when $k$ is too large for a fixed mesh. In contrast, our methods seem to be {more stable}, as illustrated in Section \ref{section6}.
\begin{notations*}
	We use $|\cdot|$ to represent the $l^2$-norm of a matrix or a vector, and $\|\cdot\|$ to represent the $L^2$-norm of a function. 
	{Denote by $C^{1}_*(\mathbb{T}^d)$ and $C^{1}_+(\mathbb{T}^d)$, respectively, the spaces of nonnegative and strictly positive functions in $C^{1}(\mathbb{T}^d)$}. {For a Banach space $Y$, the set {${C}^1\left([0,+\infty);Y\right)$} is the space} of continuous {differentiable} functions in $t\in [0,+\infty)$, with values in $Y$. For $f, g\in L^2\left(\mathbb{T}^d\right)$, {the standard $L^2$ inner product $\left\langle f, g \right\rangle$  is}  $\int_{\mathbb{T}^d}fg$. Besides, we also denote the inner product of two vectors in a Euclidean space by $\left\langle \cdot, \cdot \right\rangle$. {We identify the $d$-dimensional torus $\mathbb{T}^d$ with $[0,1]^d$.}  Finally, {we denote by $\mathbb{R}^N_+$  the subset of vectors in $\mathbb{R}^N$ with positive components.} 
\end{notations*}
\section{MFGs and Effective Hamiltonians}
\label{section2}
To solve the cell problem and compute the projected Mather measure, we combine
 \eqref{HJ} and \eqref{FokkerPlank} into the system
\begin{equation}
\label{MFGWithHBar}
\begin{cases}
H(x,P+D_xu)=\overline{H}, \\
-\div(D_pH(x,P+D_xu)m)=0,
\end{cases}
\end{equation}
where $m \geq 0$ is a probability measure. Taking into account that
\begin{equation*}
\overline{H}=\int_{\mathbb{T}^d}H(x,P+D_xu) dx,
\end{equation*}
we define {$F:  C_*^{1}(\mathbb{T}^d)\times C^{2}(\mathbb{T}^d)\rightarrow C^1(\mathbb{T}^d)\times C(\mathbb{T}^d)$} as follows:
\begin{align}
\label{OpF}
F\begin{bmatrix}
m\\u
\end{bmatrix}=\begin{bmatrix}
-H(x,P+D_xu)+\int_{\mathbb{T}^d}H(x,P+D_xu)dx\\
-\div(D_pH(x,P+D_xu)m)
\end{bmatrix}.
\end{align}
We notice that if $u$ is the viscosity solution of \eqref{MFGWithHBar}, so is $u+C$, where $C$ is an arbitrary constant. So,  {to normalize our solutions,} we require $\int_{\mathbb{T}^d}u=0$. Hence, our goal is to solve 
\begin{equation}
\label{MFGEH}
F(m,u)=0, \quad\text{subject to }\int_{\mathbb{T}^d}m=1, \int_{\mathbb{T}^d}u=0.
\end{equation}

The previous equation may not have a solution {$(m,u)$} in {$C_*^{1}\left(\mathbb{T}^d\right)\times C^{2}\left(\mathbb{T}^d\right)$}. For example, $m$ may be singular. We tackle this matter by introducing various approximation procedures.
First, we attempt to use a monotone flow as in \cite{almulla2017two} to approximate the solution of \eqref{MFGEH}. However, we observe that this flow may not preserve the non-negativity of $m$. This leads us to introduce the Hessian Riemannian flow. Under the assumption of the existence of a solution to \eqref{MFGWithHBar} with $m>0$, we 
prove the convergence for $u$. Unfortunately, the convergence for $m$ may not hold due to the non-uniqueness of solutions of \eqref{MFGWithHBar} and the possibility of $m$ vanishing. Hence, we add an entropy penalization term to the Hessian Riemannian flow that gives both the positivity and the convergence for $m$. 

\subsection{The monotone flow}
A way to compute the solution of \eqref{MFGEH} is the monotone flow method introduced in \cite{almulla2017two}. First, we recall that the operator $F$ defined in \eqref{OpF} is monotone provided $H\left(x,p\right)$ is convex in $p$; that is, 
for {$\left(m,u\right), \left(\theta, v\right)\in C^{1}_*(\mathbb{T}^d)\times C^{2}(\mathbb{T}^d)$}, $\int_{\mathbb{T}^d}m=1$ and
$\int_{\mathbb{T}^d}\theta=1$, $F$ satisfies
{
\begin{equation}
	\label{MonotonicityFbarFormula}
	\left\langle F\begin{bmatrix}
	m\\u
	\end{bmatrix} - F\begin{bmatrix}
	\theta\\v
	\end{bmatrix}, \begin{bmatrix}
	m\\u
	\end{bmatrix} - \begin{bmatrix}
	\theta\\v
	\end{bmatrix}\right\rangle \geq 0,
\end{equation}
where we use the  ${L^2\left(\mathbb{T}^d\right)\times L^2\left(\mathbb{T}^d\right)}$ inner product.}
The monotonicity of $F$ suggests the monotone flow,
\begin{equation}
\label{TheorMonotoneFlow}
\begin{bmatrix}
\dot{\boldsymbol{m}}\\\dot{\boldsymbol{u}}
\end{bmatrix}=-F\begin{bmatrix}
\boldsymbol{m}\\\boldsymbol{u}
\end{bmatrix},
\end{equation}
where {$\left(\boldsymbol{m}, \boldsymbol{u}\right)\in {C}^1\left([0,\infty);C^1(\mathbb{T}^d)\times C^{2}(\mathbb{T}^d)\right)$}
to approximate in the limit $t\to \infty$ the stationary solutions. This approach is suggested by the following reasoning. 
If $\left(\boldsymbol{m}, \boldsymbol{u}\right)$, $\left(\widetilde{\boldsymbol{m}}, \widetilde{\boldsymbol{u}}\right)$ solve \eqref{TheorMonotoneFlow} and $\int_{\mathbb{T}^d}\boldsymbol{m}=\int_{\mathbb{T}^d}\widetilde{\boldsymbol{m}}=1$, we have
\begin{equation}
\label{MonotoneContraction}
\frac{d}{dt}\left(\|\boldsymbol{u}-\widetilde{\boldsymbol{u}}\|^2+\|\boldsymbol{m}-\widetilde{\boldsymbol{m}}\|^2\right)=-2\left\langle
F\begin{bmatrix}
\boldsymbol{m}\\\boldsymbol{u}
\end{bmatrix}-F\begin{bmatrix}
\widetilde{\boldsymbol{m}}\\
\widetilde{\boldsymbol{u}}
\end{bmatrix}, 
\begin{bmatrix}
\boldsymbol{m}\\\boldsymbol{u}
\end{bmatrix}-\begin{bmatrix}
\widetilde{\boldsymbol{m}}\\
\widetilde{\boldsymbol{u}}
\end{bmatrix}
 \right\rangle \leq 0,
\end{equation}
provided $t\geq 0, \boldsymbol{m}\geq 0$ and $\boldsymbol{\widetilde{m}}\geq 0$. Thus, if $\left(m^*, u^*\right)$ solves \eqref{MFGEH}. Then, $\left(m^*, u^*\right)$ also solves \eqref{TheorMonotoneFlow}, since $\frac{d}{dt}m^*=\frac{d}{dt}u^*=0$. Thus, if we suppose further that  {$\boldsymbol{m} \in {C}^1\left([0,\infty);C^1(\mathbb{T}^d)\right)$, $\boldsymbol{u}\in {C}^1\left([0,\infty);C^2(\mathbb{T}^d)\right)$}, $\left(\boldsymbol{m}, \boldsymbol{u}\right)$ solves \eqref{TheorMonotoneFlow},  $\int_{\mathbb{T}^d}\boldsymbol{m}=1$, $\boldsymbol{m}\geq 0$, $\int_{\mathbb{T}^d}\boldsymbol{u}=0$ and $m^*\geq 0$, we have
\begin{equation*}
\frac{d}{dt}\left(\|\boldsymbol{u}-u^*\|^2+\|\boldsymbol{m}-m^*\|^2\right)=-2\left\langle
F\begin{bmatrix}
\boldsymbol{m}\\\boldsymbol{u}
\end{bmatrix}-F\begin{bmatrix}
m^*\\
u^*
\end{bmatrix}, 
\begin{bmatrix}
\boldsymbol{m}\\\boldsymbol{u}
\end{bmatrix}-\begin{bmatrix}
m^*\\
u^*
\end{bmatrix}
\right\rangle \leq 0, 
\end{equation*}
according to \eqref{MonotoneContraction}. 
In this case, \eqref{TheorMonotoneFlow} defines a contraction in the region where $\boldsymbol{m}$ is non-negative. 

However, there are several issues with this monotone flow approach. First, we do not know if it is globally defined. Besides, the projected Mather measure may be singular. Finally, the convergence is not guaranteed either. In Example \ref{NegativityOfMonotoneflow} below, we show that the monotone flow may not preserve the non-negativity of $\boldsymbol{m}$. Hence, \eqref{TheorMonotoneFlow} may not give a global contraction. 
\begin{example}
\label{NegativityOfMonotoneflow}
Let $d=1$. We set $H(x,p)=\frac{p^2}{2}+\sin\left(2\pi x\right)$ and $P=0$. Then, the monotone flow in \eqref{TheorMonotoneFlow} becomes
\begin{equation}
\label{NegativityOfMonotoneflowEq}
\begin{bmatrix}
\dot{\boldsymbol{m}}\\\dot{\boldsymbol{u}}
\end{bmatrix}=\begin{bmatrix}
\frac{\boldsymbol{u}_x^2}{2}+\sin\left(2\pi x\right)-\int_{0}^{1}\frac{\boldsymbol{u}_x^2}{2}dx\\
\left(\boldsymbol{m}\boldsymbol{u}_x\right)_x
\end{bmatrix}.
\end{equation}
Let $\left(m_0,0\right)$ to be the initial point and $\int_{0}^{1}m_0dx=1$. It is easy to check that $\left( \boldsymbol{m}, \boldsymbol{u}\right)=\left(m_0+\sin\left(2\pi x\right)t, 0\right)$ is the solution for \eqref{NegativityOfMonotoneflowEq}. However, $\boldsymbol{m}(t)$ becomes negative in some regions as $t\rightarrow +\infty$.  
\end{example}

Another reason why the convergence may fail is that the solution of \eqref{MFGWithHBar} may not be unique, as the next example illustrates. 
\begin{example}
\label{UniquenessExample}
	Let $d=2$ and  $H(x,p)=\frac{|p|^2}{2}$. Then,  $D_pH(x,p)=p$. Let $x=(x_1,x_2)$. We choose $P=(1,0)$. Accordingly, \eqref{MFGWithHBar} becomes
	\begin{equation}
	\label{UniquenessExampleEq}
	\begin{cases}
	\frac{|P+D_xu|^2}{2}=\overline{H}, \\
	-\div\left(m\left(P+D_xu\right)\right)=0.
	\end{cases}
	\end{equation}
	It is easy to see that $\overline{H}=\frac{1}{2}$,  $u=0$ and $m=f(x_2)$, where $f$ is any function that depends only on the second component of $x$ solving \eqref{UniquenessExampleEq}. Thus, $m$ is not unique. 
\end{example}

To guarantee the non-negativity of $m$ in the monotone flow, we use the Hessian Riemannian gradient flow introduced in  \cite{alvarez2004hessian}. 

\subsection{The Hessian Riemannian gradient flow}
In \cite{alvarez2004hessian}, Alvarez et al. considered the constrained minimization problem
$$\min\{f(x)\ |\ x\in\overline{E},\  Ax=b\},$$ where $\overline{E}$ is the closure of an open, nonempty, convex set $E\subset \mathbb{R}^n$, $A \in \mathbb{R}^{m\times n}$ with $m\leq n$, $b\in \mathbb{R}^n$, and $f\in {C}^1(\mathbb{R}^n)$. {To solve this problem, the authors introduced a Riemannian metric  $g$  derived from the Hessian matrix $\nabla^2 h$ of a Legendre-type convex function \cite{alvarez2004hessian} $h$ on $E$. Then, they used the steepest descent flow to generate trajectories in the relative interior of the feasible set $\mathcal{F}:=E\cap\{x\ |\ Ax=b\}$. } 
In the steepest descent method, the authors sought a trajectory $x(t)$ solving
\begin{align}
	\label{SDM}
	\begin{cases}
		\dot{x}+\nabla_Hf_{|\mathcal{F}}(x)=0,\\
		x(0)=x^0\in\mathcal{F},
	\end{cases}
\end{align}
where $\nabla_Hf_{|\mathcal{F}}(x)$ is the projection w.r.t. $g$ of the gradient of $f$ into the admissible directions. 
According to \cite{alvarez2004hessian}, \eqref{SDM} is well-posed. Moreover, this steepest descent flow never leaves the admissible set and leads to a local minimum. 

A similar idea can be used for monotone operators and lead us to the Hessian Riemannian flow. 

\subsection{The Hessian Riemannian flow}
To guarantee the non-negativity of $m$, we introduce the Hessian Riemannian flow. {More precisely, we define the  convex function {$h: C^1_*(\mathbb{T}^d)\times C^{2}(\mathbb{T}^d)\rightarrow\mathbb{R}$}  such that}
\begin{equation*}
h(m,u)=\int_{\mathbb{T}^d}m\ln m + \frac{1}{2}u^2dx.
\end{equation*}
{The Hessian of $h$, $\nabla^2h$, evaluated at $(m,u)$ is defined,  for any $\left(\mu_1,v_1\right), \left(\mu_2,v_2\right)\in C_*^1\left(\mathbb{T}^d\right)\times C^2\left(\mathbb{T}^d\right)$, by
	\begin{equation*}
	\nabla^2h(m,u)\left[\begin{pmatrix}
	\mu_1\\ v_1
	\end{pmatrix}, \begin{pmatrix}
	\mu_2\\ v_2
	\end{pmatrix}\right]=\int_{\mathbb{T}^d}\left(\mu_1,v_1\right)\mathcal{H}\begin{pmatrix}
	\mu_2\\ v_2
	\end{pmatrix}dx,
	\end{equation*}
where $\mathcal{H}$ is
\begin{equation*}
\mathcal{H}=\begin{bmatrix}
\frac{1}{m} & 0\\
0 & 1\\
\end{bmatrix}.
\end{equation*}
Considering the projection onto $\int_{\mathbb{T}^d}m=1$, we redefine the function $F$ in \eqref{OpF} as
\begin{align*}
F\begin{bmatrix}
m\\u
\end{bmatrix}=\begin{bmatrix}
-H(x,P+D_xu)+\frac{\int_{\mathbb{T}^d}H(x,P+D_xu)m 
}{\int_{\mathbb{T}^d}m}\\
-\div(D_pH(x,P+D_xu)m)
\end{bmatrix}.
\end{align*}
Then, we consider the Hessian Riemannian flow, 
\begin{equation*}
	\begin{bmatrix}
		\dot{\boldsymbol{m}}\\\dot{\boldsymbol{u}}
	\end{bmatrix}=-(\mathcal{H})^{-1} F\begin{bmatrix}
	\boldsymbol{m}\\\boldsymbol{u}
\end{bmatrix},
\end{equation*}
}which can be rewritten as 
\begin{align}
	\label{ContinuousHMF}
	\begin{bmatrix}
		\dot{\boldsymbol{m}}\\\dot{\boldsymbol{u}}
	\end{bmatrix}=-\begin{bmatrix}
	\boldsymbol{m}\left(-H(x,P+D_x\boldsymbol{u})+\frac{\int_{\mathbb{T}^d}H(x,P+D_x\boldsymbol{u})\boldsymbol{m}}{\int_{\mathbb{T}^d}\boldsymbol{m}}\right)\\
	-\div(D_pH(x,P+D_x\boldsymbol{u})\boldsymbol{m})
\end{bmatrix}.
\end{align}
The mass of $\boldsymbol{m}$ is preserved by this flow, because
\begin{equation}
\label{Contm}
	\int_{\mathbb{T}^d}\dot{\boldsymbol{m}}=-\int_{\mathbb{T}^d}\boldsymbol{m}\left(-H(x,P+D_x\boldsymbol{u})+\frac{\int_{\mathbb{T}^d}H(x,P+D_x\boldsymbol{u})\boldsymbol{m}}{\int_{\mathbb{T}^d}\boldsymbol{m}}\right)=0.
\end{equation}
{The positivity of $\boldsymbol{m}$ follows by the existence of solutions to the flow in  \eqref{ContinuousHMF}, the continuity of $H$, and \eqref{Contm}. More precisely, assume that $(\boldsymbol{m},\boldsymbol{u})$ is a solution to \eqref{ContinuousHMF} in ${C}^1\left([0,\infty);C^1(\mathbb{T}^d)\times C^2(\mathbb{T}^d)\right)$. Then, the first equation of \eqref{ContinuousHMF} gives
\begin{align*}
\frac{d}{dt}\ln \boldsymbol{m}=H(x,P+D_x\boldsymbol{u})-\frac{\int_{\mathbb{T}^d}H(x,P+D_x\boldsymbol{u})\boldsymbol{m}}{\int_{\mathbb{T}^d}\boldsymbol{m}}.
\end{align*}
The continuity of $H$, the boundedness of $(\boldsymbol{m},\boldsymbol{u})$, and \eqref{Contm} imply that the right-hand side of the prior equation is bounded. Then, for any given $t\in [0,+\infty)$, $\ln \boldsymbol{m}(t)$ is bounded. Hence, $\boldsymbol{m}(t)$ is positive. 

Next, we have the following convergence result.
}
\begin{pro}
	\label{ConvergenceU}
	Suppose that Assumption \ref{HStronglyConvex} holds and that {$\left(\boldsymbol{m}, \boldsymbol{u}\right)\in {C}^1\left([0,\infty);C^1_+(\mathbb{T}^d)\times C^2(\mathbb{T}^d)\right)$} is the solution of \eqref{ContinuousHMF}.  Assume further that $\int_{\mathbb{T}^d}\boldsymbol{m}(0)=1$ and $\int_{\mathbb{T}^d}\boldsymbol{u}(0)=0$. Moreover, suppose that $(m^*,u^*)$ solves \eqref{MFGEH} and {$(m^*,u^*)\in C^1_+(\mathbb{T}^d)\times C^2(\mathbb{T}^d)$}. Then, there exists a sequence, $\{t_i\}$, such that
	\begin{equation*}
	\lim_{i\rightarrow +\infty}\int_{\mathbb{T}^d}\left|D_xu^*-D_x\boldsymbol{u}(t_i)\right|^2m^*dx\rightarrow 0.
	\end{equation*}
	In addition, 
	\begin{equation*}
	\int_{\mathbb{T}^d}m^*\ln m^*\leq  \int_{\mathbb{T}^d}m^*\ln\boldsymbol{m}(t)+C,
	\end{equation*}
	where $C$ is a constant.
\end{pro}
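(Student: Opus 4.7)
\medskip
\noindent\textbf{Proof proposal.}

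The natural Lyapunov functional for a Hessian Riemannian flow driven by the convex potential $h(m,u)=\int_{\mathbb{T}^d}m\ln m+\tfrac12 u^2\,dx$ is the associated Bregman divergence. I would therefore set
\begin{equation*}
\Phi(t):=\int_{\mathbb{T}^d}\!\Big(m^*\ln\!\tfrac{m^*}{\boldsymbol{m}(t)}+\boldsymbol{m}(t)-m^*\Big)dx+\tfrac{1}{2}\|\boldsymbol{u}(t)-u^*\|^2.
\end{equation*}
Since $\int\boldsymbol{m}=\int m^*=1$ the mass term vanishes, so $\Phi(t)\geq 0$ and it equals the KL divergence of $m^*$ from $\boldsymbol{m}(t)$ plus $\tfrac12\|\boldsymbol{u}-u^*\|^2$. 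The core of the argument is to show that $\Phi$ is nonincreasing along the flow, with a quantitatively good dissipation.

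Differentiating in $t$ and using the two equations of \eqref{ContinuousHMF}, together with $\int\dot{\boldsymbol{m}}=0$, gives
\begin{equation*}
\dot\Phi=\int_{\mathbb{T}^d}\boldsymbol{m}H(x,P+D_x\boldsymbol{u})\,dx-\int_{\mathbb{T}^d}m^* H(x,P+D_x\boldsymbol{u})\,dx-\int_{\mathbb{T}^d}\boldsymbol{m}\,D_pH(x,P+D_x\boldsymbol{u})\cdot(D_x\boldsymbol{u}-D_xu^*)\,dx,
\end{equation*}
after integrating the $\dot{\boldsymbol{u}}$-term by parts (no boundary contributions, since we work on $\mathbb{T}^d$). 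The constant-in-space term $\overline{\boldsymbol{H}}$ cancels because $\int\boldsymbol{m}=\int m^*=1$.

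Next, I would apply Assumption \ref{HStronglyConvex} in two different directions. Expanding $H(x,P+D_xu^*)$ around $P+D_x\boldsymbol{u}$, multiplying by $\boldsymbol{m}\geq 0$ and integrating yields
\begin{equation*}
\int_{\mathbb{T}^d}\boldsymbol{m}H(x,P+D_x\boldsymbol{u})\,dx-\int_{\mathbb{T}^d}\boldsymbol{m}\,D_pH(x,P+D_x\boldsymbol{u})\cdot(D_x\boldsymbol{u}-D_xu^*)\,dx\leq \int_{\mathbb{T}^d}\boldsymbol{m}H(x,P+D_xu^*)\,dx-\tfrac{\rho}{2}\!\int_{\mathbb{T}^d}\boldsymbol{m}|D_x\boldsymbol{u}-D_xu^*|^2\,dx.
\end{equation*}
Symmetrically, expanding $H(x,P+D_x\boldsymbol{u})$ around $P+D_xu^*$, multiplying by $m^*\geq 0$ and integrating, the linear term $\int m^*\,D_pH(x,P+D_xu^*)\cdot(D_x\boldsymbol{u}-D_xu^*)\,dx$ vanishes after one integration by parts by the Fokker--Planck equation $-\div(m^* D_pH(x,P+D_xu^*))=0$, leaving
\begin{equation*}
-\int_{\mathbb{T}^d}m^* H(x,P+D_x\boldsymbol{u})\,dx\leq -\int_{\mathbb{T}^d}m^* H(x,P+D_xu^*)\,dx-\tfrac{\rho}{2}\!\int_{\mathbb{T}^d}m^*|D_x\boldsymbol{u}-D_xu^*|^2\,dx.
\end{equation*}
Summing the two inequalities and using $H(x,P+D_xu^*)=\overline{H}^*$ with $\int\boldsymbol{m}=\int m^*=1$ to cancel the zero-order terms gives
\begin{equation*}
\dot\Phi(t)\leq -\tfrac{\rho}{2}\!\int_{\mathbb{T}^d}(\boldsymbol{m}+m^*)|D_x\boldsymbol{u}-D_xu^*|^2\,dx.
\end{equation*}

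With this estimate both conclusions follow immediately. Integrating from $0$ to $\infty$ and using $\Phi\geq 0$ gives $\int_0^\infty\!\int_{\mathbb{T}^d}m^*|D_x\boldsymbol{u}-D_xu^*|^2\,dx\,dt<\infty$, so some sequence $t_i\to\infty$ delivers the first claim. Monotonicity of $\Phi$ and nonnegativity of the $L^2$-piece yield $\int m^*\ln(m^*/\boldsymbol{m}(t))\,dx\leq \Phi(0)$, which is exactly the second claim with $C=\Phi(0)$. The main technical point to verify carefully is the vanishing of the cross term via integration by parts against the stationary equation for $m^*$; this needs $m^*,u^*\in C^1_+\times C^2$, which is assumed. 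Everything else is an application of strong convexity and bookkeeping.
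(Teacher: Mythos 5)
Your proof is correct and follows essentially the same route as the paper's: the same Lyapunov functional (the relative entropy of $m^*$ with respect to $\boldsymbol{m}(t)$ plus $\tfrac12\|\boldsymbol{u}-u^*\|^2$), the same dissipation estimate from strong convexity of $H$ in $p$, and the same conclusion from integrability of the dissipation and monotonicity of the functional. The only cosmetic difference is that you cancel the cross terms by invoking the stationary equations for $(m^*,u^*)$ explicitly, whereas the paper packages the identical computation as the monotonicity pairing with $F(m^*,u^*)$ after inserting $\dot{m^*}=\dot{u^*}=0$.
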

\begin{proof}
	We notice that, if $\int_{\mathbb{T}^d}\boldsymbol{u}(0)=0$, we have $\int_{\mathbb{T}^d}\boldsymbol{u}(t)=0$, since
	\begin{equation*}
	\frac{d}{dt}\int_{\mathbb{T}^d}\boldsymbol{u}(t)=\int_{\mathbb{T}^d}\div(D_pH(x,P+D_x\boldsymbol{u})\boldsymbol{m})dx=0,
	\end{equation*}
	by the periodicity of $\boldsymbol{u}(t)$ and $\boldsymbol{m}(t)$. 
	
	For the convergence, we define a Lyapunov function for $t>0$,
	\begin{equation}
	\label{defphit}
	\phi(t)=\int_{\mathbb{T}^d}m^*\ln m^*-\boldsymbol{m}(t)\ln\boldsymbol{m}(t)-\left(1+\ln\boldsymbol{m}(t)\right)\left(m^*-\boldsymbol{m}(t)\right)dx+\frac{1}{2}\left\|u^*-\boldsymbol{u}(t)\right\|^2.
	\end{equation}
	Because  $\int_{\mathbb{T}^d}m^*= \int_{\mathbb{T}^d}\boldsymbol{m}(t)=1$, $\phi(t)$ can be simplified as
	\begin{equation*}
	\phi(t)=\int_{\mathbb{T}^d}m^*\ln\frac{m^*}{\boldsymbol{m}(t)}dx+\frac{1}{2}\left\|u^*-\boldsymbol{u}(t)\right\|^2.
	\end{equation*}
	
	We know that $\phi(t)\geq 0$ since the mapping $z\mapsto z\ln z$ is convex for all $z\geq 0$. Next, {by differentiating $\phi(t)$ in time, using the fact that $\dot{m^*}=0, \dot{u^*}=0$, and $\int_{\mathbb{T}^d}{\dot{\boldsymbol{m}}}=0$, and integrating by parts, we get}
	\begin{align*}
	\begin{split}
	&\frac{d}{dt}\phi(t)\\
	=&\int_{\mathbb{T}^d}-\frac{\dot{\boldsymbol{m}}}{\boldsymbol{m}}\left(m^*-\boldsymbol{m}\right)dx-\left\langle \dot{\boldsymbol{u}}, u^*-\boldsymbol{u}\right\rangle\\
	\leq&\int_{\mathbb{T}^d}\left(\frac{\dot{m^*}}{m^*}-\frac{\dot{\boldsymbol{m}}}{\boldsymbol{m}}\right)\left(m^*-\boldsymbol{m}\right)dx+\left\langle \dot{u^*}-\dot{\boldsymbol{u}}, u^*-\boldsymbol{u}\right\rangle
	\end{split}\\
	=&-\int_{\mathbb{T}^d} \left(H\left(x,P+D_xu^*\right)-H\left(x,P+D_x\boldsymbol{u}\right)-{\left\langle D_pH\left(x,P+D_x\boldsymbol{u}\right), D_xu^*-D_x\boldsymbol{u}\right\rangle}\right)\boldsymbol{m}\\
	&-\int_{\mathbb{T}^d} \left(H\left(x,P+D_x\boldsymbol{u}\right)-H\left(x,P+D_xu^*\right)-{\left\langle D_pH\left(x,P+D_xu^*\right), D_x\boldsymbol{u}-D_xu^*\right\rangle}\right)m^*\\
	\leq& -\rho\int_{\mathbb{T}^d}\left|D_xu^*-D_x\boldsymbol{u}\right|^2(m^*+\boldsymbol{m})dx,
	\end{align*}
	where we apply Assumption \ref{HStronglyConvex} in the last inequality. Then, we have
	\begin{equation}
	\label{LyapunovIneq}
	\frac{d}{dt}\phi(t)+\rho\int_{\mathbb{T}^d}\left|D_xu^*-D_x\boldsymbol{u}\right|^2(m^*+\boldsymbol{m})dx\leq 0.
	\end{equation}
	Hence, $\boldsymbol{u}(t)$ is bounded in $L^2$ and $\int_{\mathbb{T}^d}\left|D_xu^*-D_x\boldsymbol{u}\right|^2(m^*+\boldsymbol{m})dx \in L^1\left(\left[0,+\infty\right)\right)$. By Lemma \ref{ConvergenceOfL1} below, we know that 
	\begin{align*}
	0=&\liminf_{t\rightarrow +\infty}\int_{\mathbb{T}^d}\left|D_xu^*-D_x\boldsymbol{u}\right|^2(m^*+\boldsymbol{m})dx.
	\end{align*}
	{So, there is a sequence $\{t_i\}$ such that }
	\begin{equation*}
	\lim_{i\rightarrow +\infty}\int_{\mathbb{T}^d}\left|D_xu^*-D_x\boldsymbol{u}(t_i)\right|^2m^*dx\rightarrow 0.
	\end{equation*}
	Besides, integrating \eqref{LyapunovIneq} from $0$ to $t$, we have
	\begin{equation*}
	\int_{\mathbb{T}^d}m^*\ln \frac{m^*}{\boldsymbol{m}(t)}\leq \phi(0).
	\end{equation*}
	So,
	\begin{equation*}
	\int_{\mathbb{T}^d}m^*\ln m^*-\phi(0)\leq  \int_{\mathbb{T}^d}m^*\ln\boldsymbol{m}(t).
	\end{equation*}
\end{proof}
\begin{lemma}
\label{ConvergenceOfL1}
Suppose that $g(t): [0,+\infty)\rightarrow [0,+\infty)$ is  continuous and $\int_0^{+\infty}g(t)<+\infty$. Then, we have
\begin{equation*}
\liminf_{t\rightarrow+\infty}g(t)=0.
\end{equation*}
\end{lemma}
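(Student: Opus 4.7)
The plan is to argue by contradiction. Suppose that $\liminf_{t\to +\infty} g(t) = L > 0$ (the value $L=+\infty$ is handled similarly, or subsumed). By the definition of $\liminf$, there exists $T>0$ such that $g(t) \geq L/2$ for every $t \geq T$; this is the standard characterization that for any $\epsilon < L$, eventually $g(t) > L - \epsilon$, taking $\epsilon = L/2$.

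Next, I would use the non-negativity of $g$ together with the above lower bound to estimate the tail of the integral. Specifically,
\begin{equation*}
\int_0^{+\infty} g(t)\, dt \geq \int_T^{+\infty} g(t)\, dt \geq \int_T^{+\infty} \frac{L}{2}\, dt = +\infty,
\end{equation*}
which contradicts the assumption $\int_0^{+\infty} g(t)\, dt < +\infty$. Hence $\liminf_{t\to +\infty} g(t) = 0$.

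There is no real obstacle here: continuity of $g$ is not even essentially used beyond ensuring the integral makes sense in the Riemann/Lebesgue sense; the argument is a one-line application of the monotonicity of the integral together with the definition of $\liminf$. The only thing to be slightly careful about is handling the case $L = +\infty$ separately, but it is identical: one gets an even larger lower bound for the tail integral.
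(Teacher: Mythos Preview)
Your proof is correct and takes essentially the same approach as the paper: both argue by contradiction, obtaining a positive lower bound on $g$ for all large $t$ from the assumption $\liminf_{t\to\infty} g(t)\neq 0$, which forces the integral to diverge. Your write-up is in fact slightly more explicit than the paper's (which simply asserts the existence of $t_0,\epsilon$ with $g(t)>\epsilon$ for $t>t_0$ and notes the contradiction), and your observation that continuity is inessential is accurate.
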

\begin{proof}
	Suppose that  $\liminf_{t\rightarrow+\infty}g(t)\not=0$. {Then, we can find $t_0\geq 0$ and $\epsilon>0$ such that, for any $t>t_0$, we have $g(t)>\epsilon$.} This contradicts the fact that $\int_0^{+\infty}g(t)<+\infty$.
\end{proof}

Unfortunately, the convergence of $\boldsymbol{m}$ for \eqref{ContinuousHMF} may not hold since solutions of $F(m,u)=0$ may not be unique and $m^*$ may fail to be positive as shown in Example \ref{UniquenessExample}. This observation motivates us to introduce an entropy penalization that we discuss next.  

\subsection{Entropy penalization}
To obtain uniqueness for the projected Mather measure, we consider the entropy penalized model given by \eqref{ApproxiMFG}. 
Combining \eqref{HBark} and the first equation of \eqref{ApproxiMFG}, we get
$$m^k=e^{k\left(H(x,P+D_xu^k)-\overline{H}^k(P)\right)}.$$
Thus, $\overline{H}^k(P)$ can be rewritten as
\begin{equation*}
\overline{H}^k(P)=\frac{\int_{\mathbb{T}^d}\left(m^kH(x,P+D_xu^k)-\frac{1}{k}m^k\ln m^k\right)dx}{\int_{\mathbb{T}^d}m^k}.
\end{equation*}

{
According to \cite{evans2003some}, under Assumptions \ref{HStronglyConvex} and \ref{GrowthBounds}, for each $k$, {there exists a unique solution $(u^k,m^k, \overline{H}^k)\in C^\infty(\mathbb{T}^d)\times C^\infty\left(\mathbb{T}^d\right)\times\mathbb{R}$ to \eqref{ApproxiMFG}.}  Besides, 
\begin{equation*}
\overline{H}(P)=\lim_{k\rightarrow\infty}\overline{H}^k(P).
\end{equation*}
By passing to a subsequence, there exists a function $u$ such that
\begin{equation*}
u^k\rightarrow u \quad \text{uniformly on $\mathbb{T}^d$},
\end{equation*}
and, for each $1\leq q <\infty$,
\begin{equation*}
D_xu^k\rightharpoonup D_xu \quad \text{weakly in $L^q\left(\mathbb{T}^d;\mathbb{R}^d\right)$}.
\end{equation*}
Furthermore, if there exists a probability measure $m$ such that
$m^k\rightharpoonup m$ weakly as measures on $\mathbb{T}^d$, then $m$ is a projected Mather measure.  
Moreover,
\begin{equation*}
{H(x, P+D_xu)\leq \overline{H}(P) \quad \text{a.e. in $\mathbb{T}^d$}.}
\end{equation*}
Thus, $u$ is a subsolution for \eqref{HJ}.  The method in \cite{evans2003some} does not give the convergence of $u^k$ and $m^k$. Here, we study the algorithm for solving \eqref{ApproxiMFG} and examine how the sequences, $u^k$ and $m^k$, behave numerically in Section \ref{section6}. Solving \eqref{ApproxiMFG} is also interesting in itself since the algorithm gives another way to solve stationary MFGs. 
}

The monotone flow for \eqref{ApproxiMFG} may not preserve the mass of $\boldsymbol{m}$. Instead, we explore its Hessian Riemannian flow, which is given in \eqref{ApproxHessianMonotoneFlow}. We notice that the mass of  $\boldsymbol{m}$ is constant since 
\begin{equation*}
\int_{\mathbb{T}^d}\dot{\boldsymbol{m}}=0.
\end{equation*}

{Next, we give a lemma, which is used to prove Theorem \ref{ConvergenceApproxiU} latter. 
\begin{lemma}
	\label{EntropyInequality}
	{Suppose that $a,\epsilon \in \mathbb{R}$, $a>0$, and $0<\epsilon<1$, then, for any $z>0$, we have}
	$$a\ln \frac{a}{z}-(a-z)-a\left(\epsilon+\ln(1-\epsilon)\right)\geq \epsilon\left|z-a\right|.$$
\end{lemma}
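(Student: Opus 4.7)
The plan is to reduce the inequality to a one-variable minimization problem and to split the analysis into the two cases $z \geq a$ and $z < a$, each of which is handled by a direct critical-point calculation. Concretely, I would set
\[
g(z) := a\ln\frac{a}{z} - (a-z) - \epsilon|z-a|, \qquad z>0,
\]
so that the claimed inequality is equivalent to $g(z) \geq a(\epsilon + \ln(1-\epsilon))$ for all $z > 0$. Note that the right-hand side is negative since $\ln(1-\epsilon) < -\epsilon$ on $(0,1)$, so there is actually room to lose something to the $\epsilon|z-a|$ term.

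On $z \geq a$ the function reduces to $g(z) = a\ln(a/z) + (1-\epsilon)(z-a)$, which is smooth and strictly convex (second derivative $a/z^2 > 0$). Setting $g'(z) = -a/z + (1-\epsilon) = 0$ gives a unique minimizer $z_1 = a/(1-\epsilon) \geq a$, at which a direct computation yields $g(z_1) = a(\epsilon + \ln(1-\epsilon))$. This already matches the target bound, so the inequality holds on $[a,\infty)$ with equality at $z_1$.

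On $0 < z < a$ the function becomes $g(z) = a\ln(a/z) - (1+\epsilon)(a-z)$, again strictly convex. Setting $g'(z) = -a/z + (1+\epsilon) = 0$ gives the minimizer $z_2 = a/(1+\epsilon) \in (0,a)$, with minimum value $g(z_2) = a(\ln(1+\epsilon) - \epsilon)$. To finish I would compare the two minima and show
\[
\ln(1+\epsilon) - \epsilon \;\geq\; \epsilon + \ln(1-\epsilon), \qquad 0<\epsilon<1,
\]
which is equivalent to $\ln\!\big((1+\epsilon)/(1-\epsilon)\big) \geq 2\epsilon$. This follows at once from the power series $\ln\!\big((1+\epsilon)/(1-\epsilon)\big) = 2\sum_{k\geq 0}\epsilon^{2k+1}/(2k+1) \geq 2\epsilon$ on $(0,1)$.

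The only genuine subtlety is the last comparison between the two candidate minima; everything else is a routine calculus argument. Note also that the minimizer $z_1 = a/(1-\epsilon)$ shows the inequality is sharp, which is reassuring and suggests the bound is the natural one to use later in the analysis.
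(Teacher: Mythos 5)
Your proposal is correct and follows essentially the same route as the paper: the same split into $z\geq a$ and $z<a$, the same critical points $a/(1-\epsilon)$ and $a/(1+\epsilon)$, and the same final reduction to $\ln\left(\frac{1+\epsilon}{1-\epsilon}\right)\geq 2\epsilon$. The only cosmetic difference is that the paper absorbs the constant $a(\epsilon+\ln(1-\epsilon))$ into the auxiliary functions rather than keeping it on the right-hand side.
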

\begin{proof}
	When $z\geq a$, we define
	$$g(z)=a\ln \frac{a}{z}-(a-z)-a\left(\epsilon+\ln(1-\epsilon)\right)-\epsilon(z-a).$$
	So, we have
	$$\frac{d g(z)}{dz}=-\frac{a}{z}+1-\epsilon.$$
	Thus, $g$ achieves its minimum when $z=\frac{a}{1-\epsilon}$.
	Since $g\left(\frac{a}{1-\epsilon}\right)=0$, {we conclude that when $z\geq a$, $g(z)\geq 0$.}
	
	Similarly, when $z<a$, we define
	$$f(z)=a\ln \frac{a}{z}-(a-z)-a\left(\epsilon+\ln(1-\epsilon)\right)+\epsilon(z-a).$$
	We differentiate $f$ with respect to $z$ and get
	$$\frac{d f(z)}{dz}=-\frac{a}{z}+1+\epsilon.$$
	Thus, $f$ achieves its minimum at $z=\frac{a}{1+\epsilon}$.
	Evaluating $f$ at $z=\frac{a}{1+\epsilon}$, we obtain
	$$f\left(\frac{a}{1+\epsilon}\right)=a\left(\ln\left(\frac{1+\epsilon}{1-\epsilon}\right)-2\epsilon\right) \geq 0.$$
	So, $f(z)\geq 0$ when $z<a$. 
	
	Therefore, {we conclude that, for any $z>0$,}
	$$a\ln \frac{a}{z}-(a-z)-a\left(\epsilon+\ln(1-\epsilon)\right)\geq \epsilon\left|z-a\right|.$$
\end{proof}
}

Then, we prove the convergence for both $\boldsymbol{m}$ and $\boldsymbol{u}$, 
\begin{proof}[Proof (of Theorem \ref{ConvergenceApproxiU})]
	{The argument is an adaptation to the proof of Proposition \ref{ConvergenceU}. As before, we have $\int_{\mathbb{T}^d}\boldsymbol{u}(t)=0$. Let $\phi$ be as in \eqref{defphit}.}
	 Differentiating $\phi$ w.r.t. $t$, we get
	\begin{align*}
	\begin{split}
	\frac{d}{dt}\phi(t)
	\leq&\int_{\mathbb{T}^d}\left(\frac{\dot{m^*}}{m^*}-\frac{\dot{\boldsymbol{m}}}{\boldsymbol{m}}\right)\left(m^*-\boldsymbol{m}\right)dx+\left\langle \dot{u^*}-\dot{\boldsymbol{u}}, u^*-\boldsymbol{u}\right\rangle\\
	=&-\int_{\mathbb{T}^d} \left(H\left(x,P+D_xu^*\right)-H\left(x,P+D_x\boldsymbol{u}\right)-D_pH\left(x,P+D_x\boldsymbol{u}\right)\left(D_xu^*-D_x\boldsymbol{u}\right)\right)\boldsymbol{m}\\
	&-\int_{\mathbb{T}^d} \left(H\left(x,P+D_x\boldsymbol{u}\right)-H\left(x,P+D_xu^*\right)-D_pH\left(x,P+D_xu^*\right)\left(D_x\boldsymbol{u}-D_xu^*\right)\right)m^*\\
	&-\frac{1}{k}\int_{\mathbb{T}^d}\left(\ln m^*-\ln \boldsymbol{m}\right)\left(m^*-\boldsymbol{m}\right)\\
	\leq& -\rho\int_{\mathbb{T}^d}\left|D_xu^*-D_x\boldsymbol{u}\right|^2(m^*+\boldsymbol{m})-\frac{1}{k}\int_{\mathbb{T}^d}\left(\ln m^*-\ln \boldsymbol{m}\right)\left(m^*-\boldsymbol{m}\right),
	\end{split}
	\end{align*}
	where we use Assumption \ref{HStronglyConvex} in the last inequality. Thus, $\phi(t)$ is decreasing, $\boldsymbol{u}(t)$ is bounded in $L^2$,  and $\int_{\mathbb{T}^d}m^*\ln m^*-\phi(0)\leq  \int_{\mathbb{T}^d}m^*\ln\boldsymbol{m}(t)$. Besides, we conclude that $$\rho\int_{\mathbb{T}^d}\left|D_xu^*-D_x\boldsymbol{u}\right|^2m^*+\frac{1}{k}\left(\ln m^*-\ln \boldsymbol{m}\right)\left(m^*-\boldsymbol{m}\right)dx \in L^1\left(\left[0,+\infty\right)\right).$$ Then, by Lemma \ref{ConvergenceOfL1}, we have
	\begin{align*}
	0=&\liminf_{t\rightarrow\infty}\int_{\mathbb{T}^d}\rho\left|D_xu^*-D_x\boldsymbol{u}\right|^2m^*+\frac{1}{k}\left(\ln m^*-\ln \boldsymbol{m}\right)\left(m^*-\boldsymbol{m}\right)dx.
	\end{align*}
	Thus, {we have a sequence $\{t_i\}$  satisfying}
	\begin{equation*}
	\int_{\mathbb{T}^d}|D_xu^*-D_x\boldsymbol{u}(t_i)|^2m^*\rightarrow 0.
	\end{equation*}
	Since $m^*$ is strictly positive on $\mathbb{T}^d$, we have
	\begin{equation*}
	\int_{\mathbb{T}^d}|D_xu^*-D_x\boldsymbol{u}(t_i)|^2\rightarrow 0.
	\end{equation*}
	By the Poincar\'{e} inequality, we obtain
	\begin{equation*}
	\|u^*-\boldsymbol{u}(t_i)\|^2\leq C\|D_xu^*-D_x\boldsymbol{u}(t_i)\|^2\rightarrow 0.
	\end{equation*}
	Thus, we conclude that 
	\begin{equation*}
	\|u^*-\boldsymbol{u}(t_i)\|_{W^{1,2}\left({\mathbb{T}^d}\right)}\rightarrow 0.
	\end{equation*}
	Also, we have
	\begin{equation*}
	\int_{\mathbb{T}^d}\left(\ln m^*-\ln \boldsymbol{m}(t_i)\right)\left(m^*-\boldsymbol{m}(t_i)\right)dx\rightarrow 0.
	\end{equation*}
	Since 
	$$m^*\ln\frac{m^*}{\boldsymbol{m}(t)} -\left(m^*-\boldsymbol{m}(t)\right)\leq \left(\ln m^*-\ln \boldsymbol{m}(t_i)\right)\left(m^*-\boldsymbol{m}(t_i)\right),$$
	we get 
	\begin{equation}
	\label{ConvEntropy}
	\int_{\mathbb{T}^d}m^*\ln m^*-\boldsymbol{m}(t_i)\ln\boldsymbol{m}(t_i)-\left(1+\ln\boldsymbol{m}(t_i)\right)\left(m^*-\boldsymbol{m}(t_i)\right)dx\rightarrow 0.
	\end{equation}
	So, we have $\phi(t_i)\rightarrow 0$. Since $\phi$ is decreasing, we have
	$
	\lim\limits_{t\rightarrow +\infty}\phi(t)=0.
$
	Accordingly, it follows that 
	\begin{equation*}
	\boldsymbol{u}(t)\rightarrow u^* \quad \text{in $L^2\left(\mathbb{T}^d\right)$}.
	\end{equation*}
	Besides, by rewriting \eqref{ConvEntropy}, we obtain
	\begin{align}
	\label{EntropyLimit}
	\lim\limits_{t\rightarrow \infty}\int_{\mathbb{T}^d}\left(m^*\ln \frac{m^*}{\boldsymbol{m}(t)}-(m^*-\boldsymbol{m}(t))\right)=0.
	\end{align}
	{By Lemma \ref{EntropyInequality}}, we get, for any $0<\epsilon<1$,
	\begin{align*}
	&\epsilon\int_{\mathbb{T}^d}\left|m^*-\boldsymbol{m}(t)\right|
	\leq \int_{\mathbb{T}^d} \left(m^*\ln \frac{m^*}{\boldsymbol{m}(t)}-(m^*-\boldsymbol{m}(t))\right)-\left(\epsilon+\ln(1-\epsilon)\right)\int_{\mathbb{T}^d}m^*.
	\end{align*}
	Then, using \eqref{EntropyLimit}, we obtain
	$$\lim\limits_{t\rightarrow \infty}\epsilon\int_{\mathbb{T}^d}\left|m^*-\boldsymbol{m}(t)\right|\leq -\left(\epsilon+\ln(1-\epsilon)\right)\int_{\mathbb{T}^d}m^*.$$
	So, we have
	\begin{align}
	\label{limitL1}
	\lim\limits_{t\rightarrow \infty}\int_{\mathbb{T}^d}\left|m^*-\boldsymbol{m}(t)\right|\leq \left(-1-\frac{\ln\left(1-\epsilon\right)}{\epsilon}\right)\int_{\mathbb{T}^d}m^*.
	\end{align}
	Because \eqref{limitL1} holds for any $\epsilon\in (0,1)$, {we consider the limit $\epsilon \rightarrow 0$ and get}
	\begin{align*}
	\lim\limits_{t\rightarrow \infty}\int_{\mathbb{T}^d}\left|m^*-\boldsymbol{m}(t)\right| = 0.
	\end{align*}
\end{proof}

\section{A numerical scheme for the Hessian Riemannian flow}
\label{NumericalScheme}
{Let $\mathbb{T}_\Delta^d$ be a uniform grid on $\mathbb{T}^d$ and let  $(x_i)_{i=1}^{N}$ be the vector of grid points. We approximate $u$ and $m$ on $\mathbb{T}_\Delta^d$ by  $U=(u_1,\dots,u_N)$ and $M=(m_1,\dots,m_N)$.} In addition, we impose periodicity of $u$ and $m$ using a straightforward convention; for $d=1$, we set $u_0=u_N$ and $m_0=m_N$. Our difference scheme for $H$ is 
\begin{align}
\label{G(U)}
G(U)=\left(G_1(U),\dots,G_N(U)\right)^T, \ \text{where}\ G_i(U) \approx H(x_i,P+D_xu(x_i)).
\end{align}
{Here is an example of a possible discretization on $H$.
\begin{example}
Let $d=1$, $p\in \mathbb{R}$, and $x\in \mathbb{T}^1$. Define the Hamiltonian $H: \mathbb{T}^1\times\mathbb{R}\rightarrow \mathbb{R}$ as
\begin{equation}
\label{QuadHami}
H(x,p)=\frac{\left|p\right|^2}{2}-\sin(2\pi x).
\end{equation}
{Thus, given $P\in \mathbb{R}$, \eqref{MFGWithHBar} becomes
	\begin{align} 
	\label{1dsyspr}
	\begin{cases}
	\frac{\left(P+u_x\right)^2}{2}-\sin(2\pi x)=\overline{H},\\
	-\left(m\left(P+u_x\right)\right)_x=0.
	\end{cases}
	\end{align}
	Let $(m,u,\overline{H})$ solve the prior system. 
	In the discretized setting, we consider equidistributed grid points on $[0,1]$, given by the $(N+1)$-dimensional vector  $X=\{ x_1,\dots,x_N\}=\{ \frac{1}{N},\dots, 1\}$.} Let $(M,U)=( m_1,\dots,m_N, u_1,\dots,u_N)$ be the approximation of $(m,u)$ on $\mathbb{T}_\Delta^d$. Then, we approximate $\left|P+u_x\right|$ at $x_i$ by
\begin{align*}
\sqrt{\left[{\min}\left\{P+\frac{u_{i+1}-u_{i}}{h},0\right\}\right]^2+\left[{\max}\left\{P+\frac{u_{i}-u_{i-1}}{h},0\right\}\right]^2}.
\end{align*}
Accordingly, we approximate $H(x_i,P+u_x)$ by
\begin{equation}
\label{DefGi}
G_i(U)=\frac{1}{2}\left[\left[{\min}\left\{P+\frac{u_{i+1}-u_{i}}{h},0\right\}\right]^2+\left[{\max}\left\{P+\frac{u_{i}-u_{i-1}}{h},0\right\}\right]^2\right] - \sin(2\pi x_i).
\end{equation}
\end{example}
} 

{Next, we study the discretization of \eqref{ApproxiMFG}.}
Let $\mathcal{L}_U:\mathbb{R}^N\rightarrow \mathbb{R}^N$ be the linearized operator of $G$ at $U\in \mathbb{R}^N$ and  $\mathcal{L}_U^*$ its adjoint operator. We define  $\widetilde{F}: \mathbb{R}^{N}_+\times \mathbb{R}^N\rightarrow \mathbb{R}^{N}\times \mathbb{R}^N$ as 
\begin{align}
\label{WidtideF}
\begin{split}
\widetilde{F}\begin{bmatrix}
M\\U
\end{bmatrix}=\begin{bmatrix}
-G_1(U)+{\widetilde{H}^k(P)}+\frac{1}{k}\ln m_1\\
\dots\\
-G_N(U)+{\widetilde{H}^k(P)}+\frac{1}{k}\ln m_N\\
\left(\mathcal{L}_U^*M\right)_1\\
\dots\\
\left(\mathcal{L}_U^*M\right)_N
\end{bmatrix}.
\end{split}
\end{align}
Then, the space-discretized version of \eqref{ApproxiMFG} is
\begin{align}
\label{DMFG-EH}
\begin{cases}
\widetilde{F}\begin{bmatrix}
M\\U
\end{bmatrix}=0,\\
m_i>0, \frac{1}{N}\sum \limits_{i=1}^{N}m_i=1,\\
{\widetilde{H}^k(P)}=\frac{\sum \limits_{i=1}^{N}\left(m_iG_i(U)-\frac{1}{k}m_i\ln m_i\right)}{\sum \limits_{i=1}^{N}m_i},
\end{cases}
\end{align}
where ${\widetilde{H}^k: \mathbb{R}^d \rightarrow \mathbb{R}}$ is a numerical approximation of the effective Hamiltonian. {We note that $\widetilde{H}^k(P)$ depends on $P$ through $G(U)$ and on the choice of $k$.} 

{Then, one can follow the discussion in Proposition 3.5 of \cite{almulla2017two} to show that \eqref{DMFG-EH} admits a unique solution. {To ensure the monotonicity of $\widetilde{F}$, we require each component of $G(U)$ to be convex.  
		\begin{hyp}
			\label{hypGConvex}
			For each $1\leq i\leq N$, the map $U\mapsto G_i(U)$ is convex for $U\in \mathbb{R}^N$.
	\end{hyp} Then, we prove below in Lemma \ref{Monotonicity} that $\widetilde{F}$ is monotone.} }
\begin{lemma}
	\label{Monotonicity}
	Suppose that Assumption \ref{hypGConvex} holds. Let $\left(M,U\right)$ and  $\left(\Theta, V\right)$ be two vectors in $\mathbb{R}_+^N\times\mathbb{R}^N$, where $M=(m_1,\dots,m_N)^T, U=(u_1,\dots,u_N)^T, \Theta=(\theta_1,\dots,\theta_N)^T$ and $V=(v_1,\dots,v_N)^T$. Moreover, $\frac{1}{N}\sum \limits_{i=1}^{N}m_i=1$ and $\frac{1}{N}\sum \limits_{i=1}^{N}\theta_i=1$.
	The operator $\widetilde{F}$ in \eqref{WidtideF} satisfies
	$$\left\langle \widetilde{F}\begin{bmatrix}
	M\\U
	\end{bmatrix} - \widetilde{F}\begin{bmatrix}
	\Theta\\V
	\end{bmatrix}, \begin{bmatrix}
	M\\U
	\end{bmatrix} - \begin{bmatrix}
	\Theta\\V
	\end{bmatrix}\right\rangle \geq 0.$$
\end{lemma}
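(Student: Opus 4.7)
The plan is to adapt the continuous monotonicity computation sketched (in the commented-out block) for \eqref{MonotonicityFbarFormula} to the discrete setting. Expand the $\mathbb{R}^{2N}$ inner product into two pieces corresponding to the ``$M$-block'' (first $N$ coordinates of $\widetilde{F}$) and the ``$U$-block'' (last $N$ coordinates). My goal will be to show that each problematic sum either cancels, is handled by convexity, or is handled by monotonicity of $\ln$.

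First, I would handle the $U$-block. By definition of the adjoint,
\begin{equation*}
\langle \mathcal{L}_U^* M - \mathcal{L}_V^* \Theta,\, U - V\rangle
= \langle M,\, \mathcal{L}_U(U-V)\rangle + \langle \Theta,\, \mathcal{L}_V(V-U)\rangle.
\end{equation*}
Since Assumption \ref{hypGConvex} makes each $G_i$ convex, the subgradient inequality gives $(\mathcal{L}_U(U-V))_i \geq G_i(U) - G_i(V)$ and $(\mathcal{L}_V(V-U))_i \geq G_i(V) - G_i(U)$. Multiplying by the strictly positive weights $m_i$ and $\theta_i$ and summing yields
\begin{equation*}
\langle \mathcal{L}_U^* M - \mathcal{L}_V^* \Theta,\, U - V\rangle \;\geq\; \sum_{i=1}^N m_i\bigl(G_i(U) - G_i(V)\bigr) + \sum_{i=1}^N \theta_i\bigl(G_i(V) - G_i(U)\bigr).
\end{equation*}

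Next, I would work on the $M$-block. Writing out the difference, the contribution is
\begin{equation*}
\sum_{i=1}^N \bigl(-G_i(U) + G_i(V)\bigr)(m_i - \theta_i) + \bigl(\widetilde{H}^k_{M,U}(P) - \widetilde{H}^k_{\Theta,V}(P)\bigr)\sum_{i=1}^N (m_i - \theta_i) + \frac{1}{k}\sum_{i=1}^N (\ln m_i - \ln \theta_i)(m_i - \theta_i).
\end{equation*}
The middle term vanishes by the mass normalization $\frac{1}{N}\sum m_i = \frac{1}{N}\sum \theta_i = 1$, which is precisely the analog of the cancellation in the continuous case. The third term is nonnegative since $\ln$ is monotone increasing. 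Expanding the first term gives $-\sum m_iG_i(U)+\sum\theta_iG_i(U)+\sum m_iG_i(V)-\sum\theta_iG_i(V)$, and adding this to the lower bound from the $U$-block, every mixed product $m_iG_i(U), m_iG_i(V), \theta_iG_i(U), \theta_iG_i(V)$ appears with coefficient zero. Thus the total is bounded below by the nonnegative entropy-type term $\frac{1}{k}\sum_i (\ln m_i - \ln \theta_i)(m_i - \theta_i) \geq 0$, establishing the claim.

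The routine part is bookkeeping the cancellation of the $G$-terms; the only conceptual step is recognizing that (i) the discrete adjoint identity replaces integration by parts, (ii) convexity of each $G_i$ (rather than convexity of $H$ in $p$) is exactly what is needed to push derivatives onto $M$, $\Theta$, and (iii) the equality of discrete masses is what kills the $\widetilde{H}^k$ discrepancy, mirroring the role of $\int m = \int \theta = 1$ in the continuous proof. No serious obstacle is expected, since the structure of \eqref{WidtideF} was designed precisely so that this computation goes through.
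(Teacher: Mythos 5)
Your proposal is correct and follows essentially the same route as the paper's proof: the discrete adjoint identity $\langle \mathcal{L}_U^*M, V-U\rangle = \langle M, \mathcal{L}_U(V-U)\rangle$, the convexity of each $G_i$ weighted by the positive $m_i,\theta_i$, the cancellation of the $\widetilde{H}^k$ term via the equal masses, and the monotonicity of $\ln$ for the entropy term. The only cosmetic difference is bookkeeping order — you bound the $U$-block first and then observe exact cancellation of the $G$-terms, whereas the paper groups each $G$-difference with its adjoint term and applies convexity to the grouped expressions — but the two computations are algebraically identical.
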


\begin{proof}
	Let $\left(M,U\right), \left(\Theta, V\right)$ be as above. We have
	\begin{align}
	\label{FWideMonotonicity}
		&\left\langle \widetilde{F}\begin{bmatrix}
	M\\U
	\end{bmatrix} - \widetilde{F}\begin{bmatrix}
	\Theta\\V
	\end{bmatrix}, \begin{bmatrix}
	M\\U
	\end{bmatrix} - \begin{bmatrix}
	\Theta\\V
	\end{bmatrix}\right\rangle \nonumber \\
	=& \sum  \limits_{j=1}^N\left(G_j\left(V\right)-G_j\left(U\right)\right)(m_j-\theta_j)+\sum \limits_{j=1}^N\left(\left(\mathcal{L}_U^*M\right)_j-\left(\mathcal{L}_V^*\Theta\right)_j\right)(u_j-v_j) \nonumber\\
	&+\frac{1}{k}\sum_{j=1}^N\left(\ln m_j-\ln\theta_j\right)(m_j-\theta_j) \nonumber \\
	&+\sum_{j=1}^{N}\left(\frac{\sum \limits_{i=1}^{N}\left(m_iG_i(U)-\frac{1}{k}m_i\ln m_i\right)}{\sum \limits_{i=1}^{N}m_i}-\frac{\sum \limits_{i=1}^{N}\left(m_iG_i(V)-\frac{1}{k}\theta_i\ln \theta_i\right)}{\sum \limits_{i=1}^{N}\theta_i}\right)\left(m_j-\theta_j\right) \nonumber \\
	=&\sum \limits_{j=1}^N\left(\left(G_j\left(V\right)-G_j\left(U\right)\right)m_j-\left(\mathcal{L}_U^*M\right)_j(v_j-u_j)\right) \nonumber \\
	&+\sum \limits_{j=1}^N\left(\left(G_j\left(U\right)-G_j\left(V\right)\right)\theta_j-\left(\mathcal{L}_V^*\Theta\right)_j(u_j-v_j)\right) \nonumber\\
	&+\frac{1}{k}\sum_{j=1}^N\left(\ln m_j-\ln\theta_j\right)(m_j-\theta_j),
	\end{align}
	taking into account that 
	\begin{align*}
	\begin{split}
	\sum_{j=1}^{N}\left(\frac{\sum \limits_{i=1}^{N}\left(m_iG_i(U)-\frac{1}{k}m_i\ln m_i\right)}{\sum \limits_{i=1}^{N}m_i}-\frac{\sum \limits_{i=1}^{N}\left(m_iG_i(V)-\frac{1}{k}\theta_i\ln \theta_i\right)}{\sum \limits_{i=1}^{N}\theta_i}\right)\left(m_j-\theta_j\right)=0,
	\end{split}
	\end{align*}
	because $\frac{1}{N}\sum \limits_{i=1}^{N}m_i=\frac{1}{N}\sum \limits_{i=1}^{N}\theta_i=1$. 
	Since $z\mapsto \ln z$ is increasing, we get
	\begin{align*}
	\begin{split}
	\frac{1}{k}\sum_{j=1}^N\left(\ln m_j-\ln\theta_j\right)(m_j-\theta_j)\geq 0.
	\end{split}
	\end{align*}
	Because $\mathcal{L}_U^*$ is the adjoint operator of $\mathcal{L}_U$, we have 
	\begin{align*}
	\sum_{j=1}^{N}\left(\mathcal{L}_U^*M\right)_j\left(v_j-u_j\right)=\left\langle\mathcal{L}_U^*M,V-U \right\rangle=\left\langle M,\mathcal{L}_U\left(V-U\right) \right\rangle=\sum_{j=1}^{N}\left(\mathcal{L}_U\left(V-U\right)\right)_jm_j.
	\end{align*}
	Thus, 
	\begin{align*}
	\begin{split}
	&\sum \limits_{j=1}^N\left(\left(G_j\left(V\right)-G_j\left(U\right)\right)m_j-\left(\mathcal{L}_U^*M\right)_j(v_j-u_j)\right)\\
	=&\sum \limits_{j=1}^N \left(
	G_j\left(V\right)-G_j\left(U\right)-\left(\mathcal{L}_U\left(V-U\right)\right)_j
	\right)m_j\geq 0,
	\end{split}
	\end{align*}
	because of the positivity of $m_j$ and of the convexity of $G_j$. Similarly,
	\begin{align}
	\label{AnotherGMonotonicity}
	\begin{split}
	\sum \limits_{j=1}^N\left(\left(G_j\left(U\right)-G_j\left(V\right)\right)\theta_j-\left(\mathcal{L}_V^*\Theta\right)_j(u_j-v_j)\right) \geq 0.
	\end{split}
	\end{align}
	Combining \eqref{FWideMonotonicity} and \eqref{AnotherGMonotonicity}, we conclude that 
	$$\left\langle \widetilde{F}\begin{bmatrix}
	M\\U
	\end{bmatrix} - \widetilde{F}\begin{bmatrix}
	\Theta\\V
	\end{bmatrix}, \begin{bmatrix}
	M\\U
	\end{bmatrix} - \begin{bmatrix}
	\Theta\\V
	\end{bmatrix}\right\rangle \geq 0.$$
\end{proof}

{Next, we construct the Hessian Riemannian flow corresponding to \eqref{DMFG-EH}, which is a discretization of \eqref{ApproxHessianMonotoneFlow} in space.}
Let  $\left(M,U\right)=(m_1,\dots,m_N,u_1,\dots,u_N)$, $(M^0,U^0)=(m^0_1,\dots,m^0_N,u^0_1,\dots,u^0_N)\in \mathbb{R}^N_+\times \mathbb{R}^N$, $\frac{1}{N}\sum \limits_{i=1}^{N}m^0_i=1$, and $\frac{1}{N}\sum \limits_{i=1}^{N}u^0_i=0$. Then, we define $\overline{F}: \mathbb{R}^N_+\times \mathbb{R}^N\rightarrow \mathbb{R}^N\times \mathbb{R}^N$ by
\begin{align}
\label{overlineF}
\overline{F}\begin{bmatrix}
M\\U
\end{bmatrix}=\begin{bmatrix}
m_1\left(-G_1(U)+\frac{\sum\limits_{i=1}^{N}\left(m_iG_i(U)-\frac{1}{k}m_i\ln m_i\right)}{\sum \limits_{i=1}^{N}m_i}+\frac{1}{k}\ln m_1\right)\\
\dots\\
m_N\left(-G_N(U)+\frac{\sum\limits_{i=1}^{N}\left(m_iG_i(U)-\frac{1}{k}m_i\ln m_i\right)}{\sum \limits_{i=1}^{N}m_i}+\frac{1}{k}\ln m_N\right)\\
\left(\mathcal{L}_U^*M\right)_1\\
\dots\\
\left(\mathcal{L}_U^*M\right)_N
\end{bmatrix}.
\end{align}
Accordingly, the Hessian Riemannian flow is
\begin{align}
\label{DynDiscreteHessianFlow}
\begin{cases}
\begin{bmatrix}
\dot{M}\\\dot{U}
\end{bmatrix}=-
\overline{F}\begin{bmatrix}
M\\U
\end{bmatrix},\\
M(0)=M^0, U(0)=U^0.
\end{cases}
\end{align}
{To prove the local existence of \eqref{DynDiscreteHessianFlow}, we need to assume that each partial derivative of $G_i(U)$, $1\leq i\leq N$, is locally Lipschitz. 
	\begin{hyp}
		\label{hypDGLocalLipschitz}
		Let $U\in \mathbb{R}^N$, $U=(u_1,\dots,u_N)^T$. For each $1\leq i, j\leq N$, $\partial_jG_i(U)$ is locally Lipschitz.
\end{hyp}}
Under Assumptions \ref{hypGConvex} and \ref{hypDGLocalLipschitz}, $\overline{F}$ is locally Lipschitz continuous on $\mathbb{R}_+^N\times\mathbb{R}^N$.
Moreover, since $\overline{F}$ depends only on $(M,U)$, we have local existence of the solution for \eqref{DynDiscreteHessianFlow}; that is, given ${(M^0,U^0)}\in \mathbb{R}_+^N\times \mathbb{R}^N$, the initial value problem in \eqref{DynDiscreteHessianFlow} { has a unique solution on $t\in (0, T)$ for some $0<T\leq+\infty$. Same as the discussion for \eqref{ContinuousHMF}, the existence of the solution to \eqref{DynDiscreteHessianFlow} implies the positivity of $M$. }

Next, we prove the boundedness of $(M,U)$ on $(0,T)$, which then implies $T=+\infty$. 
\begin{pro}
\label{LocalBoundedness}
{Suppose that Assumption \ref{hypGConvex} holds and that \eqref{DynDiscreteHessianFlow} has a solution  $\left(M,U\right) \in \mathbb{R}_+^N\times\mathbb{R}^N$ on $[0,T)$, where $T<+\infty$, $M=(m_1,\dots,m_N)^T$ and $U=(u_1,\dots,u_N)^T$.} Then,
\begin{equation*}
\left(\sum_{j=1}^{N}\left(m_j^2(t)+u_j^2(t)\right)\right)^{\frac{1}{2}}
\end{equation*}
is bounded as $t\rightarrow T$.
\end{pro}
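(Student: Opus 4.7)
The plan is to construct a Lyapunov function anchored at a stationary solution $(M^*,U^*)$ of the discrete system \eqref{DMFG-EH}, whose existence is recalled right after \eqref{DMFG-EH} as a consequence of Assumption \ref{hypGConvex}. Following the continuous argument in Proposition \ref{ConvergenceU}, I would set
\[
\phi(t)=\sum_{j=1}^{N}\left(m_j^*\ln\frac{m_j^*}{m_j(t)}-(m_j^*-m_j(t))\right)+\frac{1}{2}\sum_{j=1}^{N}(u_j^*-u_j(t))^2.
\]
By convexity of $z\mapsto z\ln z$ each entropy summand is nonnegative, so $\phi\geq 0$; moreover $\phi(0)<\infty$ because $M^0\in\mathbb{R}_+^N$.

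The key step is to show that $\phi$ is nonincreasing along the flow. Writing $\widetilde{F}_1,\widetilde{F}_2$ for the two blocks of $\widetilde{F}$, equations \eqref{overlineF} and \eqref{DynDiscreteHessianFlow} give $\dot m_j/m_j=-\widetilde{F}_{1,j}(M,U)$ and $\dot u_j=-\widetilde{F}_{2,j}(M,U)$. Differentiating $\phi$ and using $\widetilde{F}(M^*,U^*)=0$ yields
\[
\frac{d\phi}{dt}=\sum_{j=1}^{N}\frac{m_j-m_j^*}{m_j}\dot m_j-\sum_{j=1}^{N}(u_j^*-u_j)\dot u_j=-\left\langle \widetilde{F}\begin{bmatrix}M\\U\end{bmatrix}-\widetilde{F}\begin{bmatrix}M^*\\U^*\end{bmatrix},\begin{bmatrix}M\\U\end{bmatrix}-\begin{bmatrix}M^*\\U^*\end{bmatrix}\right\rangle\leq 0,
\]
where the final inequality is Lemma \ref{Monotonicity}. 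Hence $\phi(t)\leq\phi(0)$ on $[0,T)$.

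The quadratic part of $\phi$ then yields $|U(t)-U^*|\leq\sqrt{2\phi(0)}$, so $|U(t)|\leq|U^*|+\sqrt{2\phi(0)}$. For $M$, I would use mass conservation along the flow: plugging the explicit expression for $\widetilde{H}^k(P)$ into $\sum_j\dot m_j$, in direct analogy with \eqref{Contm}, gives $\sum_j\dot m_j=0$, so $\sum_j m_j(t)=N$ on $[0,T)$. Combined with the positivity of $m_j(t)$ on $[0,T)$ noted just before the proposition, this forces $0<m_j(t)\leq N$ for every $j$, and hence $|M(t)|\leq N^{3/2}$. Adding the two estimates produces the claimed uniform bound on $\left(\sum_j(m_j^2+u_j^2)\right)^{1/2}$ over $[0,T)$, which in particular survives the limit $t\to T$.

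The only delicate point is the derivative identity for $\phi$ and the cancellation that recasts it as the monotonicity pairing; once that is in place the two boundedness statements are essentially immediate. An alternative route for $M$ that avoids mass conservation would be to apply Lemma \ref{EntropyInequality} componentwise to bound $\sum_j|m_j(t)-m_j^*|$ in terms of $\phi(0)$, but using mass conservation is cleaner and tighter.
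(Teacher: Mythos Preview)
Your proof is correct and follows essentially the same route as the paper: both use the relative-entropy-plus-quadratic Lyapunov function anchored at the stationary point $(M^*,U^*)$, show it is nonincreasing along the flow, and combine the resulting bound on $U$ with mass conservation (and positivity) to bound $M$. The only cosmetic difference is that you invoke Lemma~\ref{Monotonicity} directly to obtain $\frac{d\phi}{dt}\leq 0$, whereas the paper expands the derivative explicitly into the convexity terms for $G_j$ and the monotonicity term for $\ln$; this is just a repackaging of the same computation. One small point of presentation: Lemma~\ref{Monotonicity} requires $\frac{1}{N}\sum_i m_i(t)=\frac{1}{N}\sum_i m_i^*=1$, so you should record mass conservation \emph{before} invoking the lemma rather than afterwards.
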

\begin{proof}
Let $(M^0,U^0)=(m^0_1,\dots,m^0_N,u^0_1,\dots,u^0_N)\in \mathbb{R}^N_+\times \mathbb{R}^N$, $\frac{1}{N}\sum \limits_{i=1}^{N}m^0_i=1$ and $\frac{1}{N}\sum \limits_{i=1}^{N}u^0_i=0$. Since $M(0)=M^0$,
we have $\frac{1}{N}\sum \limits_{i=1}^{N}m_i(t)=1$. {In addition, due to $m_i(t)>0$, we have that $m_i(t)$ is bounded as $t\rightarrow T$.} Let $\left(M^*,U^*\right)=\left(m_1^*,\dots,m_N^*,u_1^*,\dots,u_N^*\right)$ be the solution of $\eqref{DMFG-EH}$.  

Define  
\begin{equation}
\label{PhiBar}
\overline{\phi}(t)=\frac{1}{N}\sum_{j=1}^{N}\left(m_j^*\ln\frac{m^*_j}{m_j(t)}\right)+\frac{1}{2N}\sum_{j=1}^{N}\left(u_j(t)-u_j^*\right)^2.
\end{equation}
By the convexity of the mapping $z\mapsto z\ln z$, $z\in \mathbb{R}$, we have 
$$m_j^*\ln m^*_j-m_j(t)\ln m_j(t)-\left(1+\ln m_j(t)\right)\left(m_j^*-m_j(t)\right)\geq 0.$$
Thus,
\begin{align}
\label{positivityofPhi}
\begin{split}
0\leq &\frac{1}{N}\sum_{j=1}^{N}\left(m_j^*\ln m^*_j-m_j(t)\ln m_j(t)-\left(1+\ln m_j(t)\right)\left(m_j^*-m_j(t)\right)\right)\\
=&\frac{1}{N}\sum_{j=1}^{N}\left(m_j^*\ln\frac{m^*_j}{m_j(t)}\right),
\end{split}
\end{align}
taking into account that $\frac{1}{N}\sum\limits_{j=1}^{N}\left(m_j^*-m_j(t)\right)
=0.$
So, we conclude that $\overline{\phi}(t)\geq 0$. Define 
$$\overline{H}_{U,M}=\frac{\sum \limits_{i=1}^{N}\left(m_iG_i(U)-\frac{m_i\ln m_i}{k}\right)}{\sum \limits_{i=1}^{N}m_i}$$
and 
$$\overline{H}_{U^*,M^*}=\frac{\sum \limits_{i=1}^{N}\left(m_i^*G_i(U^*)-\frac{m_i^*\ln m_i^*}{k}\right)}{\sum \limits_{i=1}^{N}m_i^*}.$$
Differentiating $\overline{\phi}$ and using $\dot{m_j^*}=\dot{u^*_j}=0$, we get
\begin{align*}
&\frac{d \overline{\phi}}{dt}
=\frac{1}{N}\sum_{j=1}^{N}\left(-\frac{m^*_j}{m_j}\dot{m}_j\right)+\frac{1}{N}\sum_{j=1}^{N}\left(\left(u_j-u_j^*\right)\dot{u}_j\right)\\
=& \frac{1}{N}\sum_{j=1}^{N}\left(\left(\frac{\dot{m}_j^*}{m_j^*}-\frac{\dot{m}_j}{m_j}\right)\left(m_j^*-m_j\right)\right)+\frac{1}{N}\sum_{j=1}^{N}\left(\left(u_j-u^*_j\right)\left(\dot{u}_j-\dot{u}^*_j\right)\right)\\
=&\frac{1}{N}\sum_{j=1}^{N}\left(\left(G_j(U^*)-\overline{H}_{U^*,M^*}-\frac{1}{k}\ln m_j^*-G_j(U)+\overline{H}_{U,M}+\frac{1}{k}\ln m_j\right)\left(m_j^*-m_j\right)\right)\\
&-\frac{1}{N}\sum_{j=1}^{N}\left(\left(u_j^*-u_j\right)\left(\left(\mathcal{L}_{U^*}^*M^*\right)_j-\left(\mathcal{L}_{U}^*M\right)_j\right)\right)\\
=&\frac{1}{N}\sum_{j=1}^{N}\left(\left(G_j(U^*)-G_j(U)\right)m_j^*-\left(u_j^*-u_j\right)\left(\mathcal{L}^*_{U^*}M^*\right)_j\right)\\
&+\frac{1}{N}\sum_{j=1}^{N}\left(-\left(G_j(U^*)-G_j(U)\right)m_j+\left(u_j^*-u_j\right)\left(\mathcal{L}_U^*M\right)_j\right)\\
&-\frac{1}{kN}\sum_{j=1}^{N}\left(\ln m^*_j-\ln m_j\right)\left(m_j^*-m_j\right),
\end{align*}
using, as before, the identity
\begin{equation*}
\frac{1}{N}\sum_{j=1}^{N}\left(\overline{H}_{U^*,M^*}-\overline{H}_{U,M}\right)\left(m_j^*-m_j\right)=0.
\end{equation*}
Thus, using the definition of $\mathcal{L}$ and $\mathcal{L}^*$, we have 
\begin{align}
\label{MonotonicityPhiBar}
\begin{split}
&\frac{d \overline{\phi}}{dt}+\frac{1}{N}\sum_{j=1}^{N}\left(\left(G_j(U)-G_j(U^*)-\left(\mathcal{L}_{U^*}\left(U-U^*\right)\right)_j\right)m^*_j\right)\\
&+\frac{1}{N}\sum_{j=1}^{N}\left(\left(G_j(U^*)-G_j(U)-\left(\mathcal{L}_U\left(U^*-U\right)\right)_j\right)m_j\right)\\
&+\frac{1}{kN}\sum_{j=1}^{N}\left(\left(\ln m_j-\ln m_j^*\right)\left(m_j-m_j^*\right)\right)= 0.
\end{split}
\end{align}
Due to the convexity of $G_j$ and the monotonicity of $z\mapsto \ln z$, $\overline{\phi}$ is decreasing. In addition, due to  \eqref{positivityofPhi}, we have $$\frac{1}{2N}\sum_{j=1}^{N}\left(u_j(t)-u_j^*\right)^2\leq \overline{\phi}(t)\leq  \overline{\phi}(0).$$
Therefore, we conclude that 
$\left(\sum_{j=1}^{N}\left(m_j^2(t)+u_j^2(t)\right)\right)^{\frac{1}{2}}$ is bounded as $t\rightarrow T$.
\end{proof}

Next, we prove \eqref{DynDiscreteHessianFlow} is well-posed.
\begin{pro}
	\label{GlobalExistence}
	Suppose that Assumptions \ref{hypGConvex} and  \ref{hypDGLocalLipschitz} hold, then \eqref{DynDiscreteHessianFlow} admits a unique solution on $[0,+\infty)$. 
\end{pro}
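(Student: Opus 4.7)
The plan is to argue by contradiction: suppose the maximal interval of existence is $[0,T)$ with $T<+\infty$, and show this forces $(M(t),U(t))$ to remain in a compact subset of the open set $\mathbb{R}_+^N\times\mathbb{R}^N$ as $t\to T$; then, since $\overline{F}$ is locally Lipschitz on this open set under Assumptions \ref{hypGConvex}--\ref{hypDGLocalLipschitz}, the standard ODE continuation theorem will allow us to extend the solution past $T$, contradicting the maximality of $T$.

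The nontrivial point is that the open set $\mathbb{R}_+^N\times\mathbb{R}^N$ has a singular boundary for $\overline{F}$ via the $\ln m_j$ term, so mere boundedness of $(M,U)$ is not enough; I need a positive lower bound on each $m_j(t)$. Proposition \ref{LocalBoundedness} already yields that $\left(\sum_j(m_j^2+u_j^2)\right)^{1/2}$ is bounded as $t\to T$, and the mass conservation $\frac{1}{N}\sum_j m_j(t)=1$ plus positivity of $m_j(t)$ gives a uniform upper bound $m_j(t)\leq N$. To obtain the lower bound, I would return to the Lyapunov function $\overline{\phi}$ defined in \eqref{PhiBar}. By \eqref{MonotonicityPhiBar}, $\overline{\phi}$ is nonincreasing, so $\overline{\phi}(t)\leq\overline{\phi}(0)$ on $[0,T)$. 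Because each term in the sum appearing in \eqref{positivityofPhi} is individually nonnegative (this follows from the convexity of $z\mapsto z\ln z$ applied pointwise to $m_j^*$ and $m_j$), I obtain, for each $j$,
\begin{equation*}
m_j^*\ln\frac{m_j^*}{m_j(t)}+\bigl(m_j(t)-m_j^*\bigr)\leq N\overline{\phi}(0).
\end{equation*}
Since $m_j(t)\leq N$, rearranging gives $\ln(m_j^*/m_j(t))\leq \bigl(N\overline{\phi}(0)+N\bigr)/m_j^*$, hence
\begin{equation*}
m_j(t)\geq m_j^*\exp\!\left(-\frac{N\overline{\phi}(0)+N}{m_j^*}\right)=:\delta_j>0.
\end{equation*}
This, together with the upper bound from Proposition \ref{LocalBoundedness}, places $(M(t),U(t))$ in a compact set $K\subset\mathbb{R}_+^N\times\mathbb{R}^N$ uniformly in $t\in[0,T)$.

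With the trajectory trapped in $K$, the standard continuation result for ODEs with locally Lipschitz right-hand sides yields that the solution can be extended continuously (and uniquely) to some larger interval $[0,T+\eta)$, contradicting the assumed maximality of $T$. Hence $T=+\infty$, and uniqueness on all of $[0,+\infty)$ is inherited from the local uniqueness established through Assumption \ref{hypDGLocalLipschitz}. The main obstacle in this argument is the boundary behavior at $m_j=0$; once the Lyapunov bound $\overline{\phi}(t)\leq\overline{\phi}(0)$ is exploited termwise to extract the lower bound $\delta_j$, the rest of the proof is routine ODE continuation.
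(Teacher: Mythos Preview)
Your proof is correct and takes a genuinely different route from the paper's. Both arguments start from Proposition~\ref{LocalBoundedness} to bound $(M,U)$ on $[0,T)$ and then must rule out $m_j(t)\to 0$ as $t\to T$. The paper does this through a separate Lemma (Lemma~\ref{KCC}): it integrates the ODE for $\ln m_j$ directly, obtaining
\[
\ln\frac{m_j(t_i)}{m_j(0)}+\frac{1}{k}\int_0^{t_i}\ln m_j\,ds=\int_0^{t_i}\Bigl(G_j(U)-\tfrac{\sum_l(m_lG_l(U)-\frac{1}{k}m_l\ln m_l)}{\sum_l m_l}\Bigr)ds,
\]
and observes that the right-hand side stays bounded (since $U$, $M$, and $z\mapsto z\ln z$ on $(0,N]$ are bounded and $T<\infty$), while the left-hand side would diverge to $-\infty$ along any sequence with $m_j(t_i)\to 0$. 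Your approach instead squeezes a \emph{quantitative} lower bound $m_j(t)\geq\delta_j>0$ directly out of the Lyapunov function $\overline{\phi}$, using that each Bregman term $m_j^*\ln(m_j^*/m_j)-(m_j^*-m_j)$ is individually nonnegative and hence individually dominated by $N\overline{\phi}(0)$. This is arguably cleaner: it reuses the machinery already set up in Proposition~\ref{LocalBoundedness} rather than introducing a new lemma, and it yields an explicit lower bound rather than a contradiction. One small cosmetic point: in your rearrangement you invoke ``$m_j(t)\leq N$'', but what you actually need there is $m_j(t)>0$ (so that $-m_j(t)+m_j^*<m_j^*\leq N$); the stated bound is still valid.
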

\begin{proof}
	Define 
	\begin{equation}
	\label{TM}
	T_M=\sup\{T>0\ |\  \exists!\  \text{solution}\  (M,U)\ \text{of \eqref{DynDiscreteHessianFlow} on}\ [0, T)\}.
	\end{equation}
	Since \eqref{DynDiscreteHessianFlow} has local existence, we know $T_M>0$. Suppose that $T_M<+\infty$. {Then, as $t\rightarrow T_M$,  $\sum\limits_{j=1}^{N}\left(m_j^2(t)+u_j^2(t)\right)$ is  bounded  on $[0, T_M)$. Let $\omega^0$ be the set of limit points of $\left(M,U\right)$ on $[0,T_M)$. Define $$\Omega=\left\{(M(t),U(t)): t\in[0, T_M)\right\}\cup \omega^0.$$}Since $\left(M,U\right)$ is bounded, we know that $\omega^0$ is nonempty and that $\Omega$ is compact. Thus, by Lemma \ref{KCC} below, $\Omega\subset \mathbb{R}_+^N\times\mathbb{R}^N$, we can extend $\left(M,U\right)$ beyond $T_M$. The extension contradicts with the finiteness of $T_M$. So, $T_M=+\infty$. 
\end{proof}
\begin{lemma}
	\label{KCC}
	Suppose that Assumtions \ref{hypGConvex} and \ref{hypDGLocalLipschitz} hold and that $\left(M(t),U(t)\right)$ is bounded on $[0, T_M)$, where $T_M$ is defined in \eqref{TM}. Assume $T_M<+\infty$. Define $\overline{\mathbb{R}}=\mathbb{R}\cup \{-\infty,+\infty\}$ and $\overline{\mathbb{R}}_+=\mathbb{R}_+\cup\{0,+\infty\}$. Let $$\Omega=\left\{(M(t),U(t)): t\in[0, T_M)\right\}\cup\omega^0,$$ where  $\omega^0\subset \overline{\mathbb{R}}_+^N\times\overline{\mathbb{R}}^N$, be the set of limit points of $\left(M(t),U(t)\right)$ on $[0,T_M)$. Then $\Omega\subset \mathbb{R}_+^N\times\mathbb{R}^N$.
\end{lemma}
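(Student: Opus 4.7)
The plan is to use the Lyapunov function $\overline{\phi}$ from Proposition \ref{LocalBoundedness} together with the entropy-type inequality to rule out each way that a limit point could escape $\mathbb{R}_+^N\times\mathbb{R}^N$. Since $(M(t),U(t))$ is bounded on $[0,T_M)$ by hypothesis, every component stays inside a compact interval, which immediately rules out the values $\pm\infty$ for the $u_j$'s and $+\infty$ for the $m_j$'s. Consequently $\omega^0\subset[0,+\infty)^N\times\mathbb{R}^N$, and the only remaining danger is that some $m_j(t_n)\to 0$ along a sequence $t_n\to T_M$.

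To exclude this, I would revisit the quantity $\overline{\phi}(t)$ defined in \eqref{PhiBar}. Using the identity $\sum_j(m_j^*-m_j(t))=0$ together with the algebraic rewriting
\begin{equation*}
m_j^*\ln m_j^*-m_j(t)\ln m_j(t)-\left(1+\ln m_j(t)\right)\left(m_j^*-m_j(t)\right)=m_j^*\ln\frac{m_j^*}{m_j(t)}-m_j^*+m_j(t),
\end{equation*}
each summand on the right is nonnegative by convexity of $z\mapsto z\ln z$, and their average plus $\frac{1}{2N}\sum_j(u_j(t)-u_j^*)^2$ equals $\overline{\phi}(t)$. From the monotonicity of $\overline{\phi}$ (established in Proposition \ref{LocalBoundedness}) we obtain, for every $j$ and every $t\in[0,T_M)$,
\begin{equation*}
m_j^*\ln\frac{m_j^*}{m_j(t)}-m_j^*+m_j(t)\leq N\,\overline{\phi}(0).
\end{equation*}

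The main (and only non-routine) step is then the elementary observation that the function $z\mapsto m_j^*\ln(m_j^*/z)-m_j^*+z$ tends to $+\infty$ as $z\to 0^+$. Hence the uniform bound above forces a uniform positive lower bound $m_j(t)\geq c_j>0$ on $[0,T_M)$, where $c_j$ depends only on $m_j^*$ and $\overline{\phi}(0)$. Passing to any limit point $(M^\infty,U^\infty)\in\omega^0$ gives $m_j^\infty\geq c_j>0$, so $\omega^0\subset\mathbb{R}_+^N\times\mathbb{R}^N$. Combining this with the fact that the trajectory itself lies in $\mathbb{R}_+^N\times\mathbb{R}^N$ by local existence yields $\Omega\subset\mathbb{R}_+^N\times\mathbb{R}^N$, which is the claim. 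No significant obstacle is expected; the argument is essentially a quantitative version of the classical fact that relative entropy controls how close a positive measure can come to vanishing on a point.
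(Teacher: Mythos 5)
Your proof is correct, but it follows a genuinely different route from the paper's. The paper argues by contradiction directly from the ODE: it integrates $\frac{d}{dt}\ln m_j$ over $[0,t_i]$ to obtain the identity \eqref{ContraEq}, observes that if $m_j(t_i)\to 0$ the left-hand side diverges to $-\infty$ while the right-hand side stays finite by the boundedness of the trajectory and the continuity of $G$, and concludes. You instead exploit the Lyapunov function $\overline{\phi}$ of \eqref{PhiBar}: since each Bregman-divergence summand $m_j^*\ln\frac{m_j^*}{m_j(t)}-(m_j^*-m_j(t))$ is individually nonnegative and their average is dominated by the decreasing quantity $\overline{\phi}(t)\leq\overline{\phi}(0)$, and since $z\mapsto m_j^*\ln(m_j^*/z)-m_j^*+z$ blows up as $z\to 0^+$ (here you use $m_j^*>0$, which the paper guarantees via the solvability of \eqref{DMFG-EH}), you extract a uniform lower bound $m_j(t)\geq c_j>0$. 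Your argument buys a stronger, quantitative conclusion — a positive lower bound on $M$ valid on all of $[0,T_M)$, not merely the exclusion of boundary limit points — and it reuses machinery already established in Proposition \ref{LocalBoundedness}. The paper's argument is slightly more self-contained at this step in that it needs only the boundedness of the trajectory and the form of the vector field, not the strict positivity of the stationary solution; but since Proposition \ref{LocalBoundedness} already invokes $(M^*,U^*)$, both proofs ultimately rest on the same hypotheses. No gap.
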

\begin{proof}
	We prove $\Omega\subset \mathbb{R}_+^N\times\mathbb{R}^N$ by contradiction. Suppose that  $\Omega\not\subset \mathbb{R}_+^N\times\mathbb{R}^N$.  We can find a sequence $t_i$ such that $\left(M(t_i),U(t_i)\right)\rightarrow \left(M^*, U^*\right)$, where $t_i<T_M$, $t_i\rightarrow T_M$, and $\left(M^*,U^*\right)\in \overline{\left(\mathbb{R}_+^N\times\mathbb{R}^N\right)}\backslash \left(\mathbb{R}_+^N\times\mathbb{R}^N\right)$. Let $M(t)=\left(m_1(t),\dots,m_N(t)\right), U(t)=\left(u_1(t),\dots,u_N(t)\right).$ From \eqref{DynDiscreteHessianFlow}, we know that
	\begin{align*}
	\frac{d}{dt}\left(\ln m_j(t)\right)=G_j(U)-\frac{\sum \limits_{l=1}^{N}\left(m_lG_l(U)-\frac{1}{k}m_l\ln m_l\right)}{\sum \limits_{l=1}^{N}m_l}-\frac{1}{k}\ln m_j.
	\end{align*}
	Thus, 
	\begin{align}
	\label{ContraEq}
	\ln \frac{m_j(t_i)}{m_j(0)}+\frac{1}{k}\int_{0}^{t_i}\ln m_jds=\int_{0}^{t_i}\left(G_j(U)-\frac{\sum \limits_{l=1}^{N}\left(m_lG_l(U)-\frac{1}{k}m_l\ln m_l\right)}{\sum \limits_{l=1}^{N}m_l}\right)ds.
	\end{align}
	Since $(M(t_i),U(t_i))\rightarrow (M^*,U^*)$, the left-hand side of \eqref{ContraEq} converges to $-\infty$. However, by Proposition \ref{LocalBoundedness}, $(M^*,U^*)$ is bounded. So, the right-hand side of \eqref{ContraEq} is finite, which gives a contradiction.
\end{proof}
{Next, we study the convergence of the Hessian Riemannian flow in \eqref{DynDiscreteHessianFlow}. We require the convexity of $G$. 
\begin{hyp}  
	\label{hypGStrictlyConvexity} 
	{Define the operator $\Gamma: \{1,\dots,d\}\times \mathbb{R}\mapsto \mathbb{R}$ such that} $\Gamma(i,\cdot)$ is the forward difference for a given grid vertex in the direction, $i$. Let $U=\{u_1,\dots,u_N\}$ and $V=\{v_1,\dots,v_N\}$ be two sets of different values for the same grid on $\mathbb{T}^d$. Then, there exists a constant $\varrho$ such that
	\begin{align}
	\label{NumericalPoincare}
	\frac{1}{h^2}\sum_{i=1}^{d}\sum_{j=1}^{N}\left(\Gamma(i,u_j)-\Gamma(i,v_j)\right)^2\leq \varrho \sum_{j=1}^{N}\left(G_j(U)-G_j(V)-\nabla G_j(V)^T\left(U-V\right)\right).
	\end{align}
\end{hyp}
\begin{remark}
	In particular, for $d=1$, \eqref{NumericalPoincare} is reduced to 
	\begin{equation}
	\label{NumericalPoincare1D}
	\sum_{j=1}^{N}\left(\frac{u_j-u_{j+1}}{h}-\frac{v_j-v_{j+1}}{h}\right)^2\leq \varrho \sum_{j=1}^{N}\left(G_j(U)-G_j(V)-\nabla G_j(V)^T\left(U-V\right)\right).
	\end{equation}
\end{remark}
{The next proposition shows that the function $G$ defined in \eqref{DefGi} satisfies Assumption \ref{hypGStrictlyConvexity}.

	\begin{pro}
		$G_i, 1\leq i\leq N$ defined in \eqref{DefGi} satisfies \eqref{NumericalPoincare1D}. 
	\end{pro}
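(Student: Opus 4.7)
The plan is to exploit a telescoping identity: although each $G_i$ is only convex (not strictly convex) because of the $\max$/$\min$ truncations, the sum $\sum_{j=1}^N G_j(U)$ reduces to a clean quadratic in the discrete gradients, and then the stated inequality falls out as an equality up to the constant $\varrho$.

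First, I would introduce the shorthand $p_j := P + (u_{j+1}-u_j)/h$ and $q_j := P + (v_{j+1}-v_j)/h$, so that $\min\{p_j,0\}^2 = (p_j^-)^2$ and $\max\{p_j,0\}^2 = (p_j^+)^2$ in the Godunov numerator of \eqref{DefGi}. The key algebraic observation is that in the cyclic sum $\sum_j G_j(U)$, the quantity $p_j$ appears once through the $\min$-term of $G_j$ and once through the $\max$-term of $G_{j+1}$. Using the pointwise identity $(p^-)^2+(p^+)^2=p^2$, this gives
\begin{equation*}
\sum_{j=1}^N G_j(U) \;=\; \frac{1}{2}\sum_{j=1}^N p_j^2 \;-\;\sum_{j=1}^N \sin(2\pi x_j),
\end{equation*}
and analogously for $V$, with $p_j$ replaced by $q_j$.

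Next, I would compute the linear term $\sum_j \nabla G_j(V)^T(U-V)$ not entry by entry but globally: since each $G_j$ is $C^1$ (the $\min$/$\max$ truncations are $C^1$ at $0$), this sum equals $\tfrac{d}{ds}\big|_{s=0}\sum_j G_j(V+s(U-V))$, which by the identity above is $\sum_j q_j(p_j-q_j)$. Substituting everything into the Bregman-type expression, the $\sin$-constants cancel and I obtain
\begin{equation*}
\sum_{j=1}^N \bigl(G_j(U) - G_j(V) - \nabla G_j(V)^T(U-V)\bigr)
= \tfrac12\sum_{j=1}^N (p_j-q_j)^2.
\end{equation*}
Since the left-hand side of \eqref{NumericalPoincare1D} is exactly $\sum_j (p_j - q_j)^2$ (because $(u_j-u_{j+1})/h - (v_j-v_{j+1})/h = q_j-p_j$), the inequality holds with the explicit constant $\varrho = 2$.

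The only delicate step is the telescoping identity $\sum_j[(p_j^-)^2+(p_{j-1}^+)^2]=\sum_j p_j^2$, which uses the periodic convention $u_0=u_N$ to reindex and the elementary identity $(p^-)^2+(p^+)^2=p^2$; beyond that, the argument is a direct computation, and no Poincaré-type or compactness ingredient is needed. I expect no real obstacle since the non-strict convexity of each individual $G_j$ is cured by summing: the "lost" strict convexity in the truncations is recovered because $p_j$ contributes through both $G_j$ (on its negative part) and $G_{j+1}$ (on its positive part).
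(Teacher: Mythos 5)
Your proof is correct and follows essentially the same route as the paper's: both rest on the telescoping identity that converts the cyclic sum $\sum_j G_j$ of the Godunov discretization into the full quadratic $\tfrac12\sum_j p_j^2$ plus a constant, from which the Bregman divergence of the sum is computed exactly as $\tfrac12\sum_j(p_j-q_j)^2$, giving $\varrho=2$. The only (cosmetic) difference is that you evaluate the linear term by differentiating the summed identity via the chain rule, whereas the paper computes $\nabla G_j(V)$ entrywise and then performs a discrete summation by parts.
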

	\begin{proof}
		From the definition in \eqref{DefGi}, we notice that
		\begin{align}
		\label{VerifyGiEq1}
		\begin{split}
		&\sum_{j=1}^{N}\left(G_j(U)-G_j(V)-\left(\nabla G_j(V)\right)^T(U-V)\right)\\
		=&\left(\sum_{j=1}^{N}G_j(U)\right)-\left(\sum_{j=1}^{N}G_j(V)\right)-\left(\nabla\left(\sum_{j=1}^{N}G_j(V)\right)\right)^T\left(U-V\right).
		\end{split}
		\end{align}
		By the periodicity of $U$, i.e. $u_0=u_N$, we get
		\begin{align*}
		\begin{split}
		\sum_{j=1}^{N}G_j(U)=&\frac{1}{2}\sum_{j=1}^{N}\left(\frac{u_j-u_{j-1}}{h}+P\right)^2-\sum_{j=1}^{N}\sin(2\pi x_j)\\
		=&\frac{1}{2h^2}\sum_{j=1}^{N}\left(u_j-u_{j-1}\right)^2+\frac{1}{2}P^2N-\sum_{j=1}^{N}\sin(2\pi x_j).
		\end{split}
		\end{align*}
		Thus,
		\begin{align}
		\label{VerifyGiEq3}
		\begin{split}
		&\left(\sum_{j=1}^{N}G_j(U)\right)-\left(\sum_{j=1}^{N}G_j(V)\right)-\left(\nabla\left(\sum_{j=1}^{N}G_j(V)\right)\right)^T\left(U-V\right)\\
		=&\frac{1}{2h^2}\left(\sum_{j=1}^{N}\left(u_j-u_{j-1}\right)^2-\sum_{j=1}^{N}\left(v_j-v_{j-1}\right)^2-\sum_{j=1}^{N}2\left(2v_j-v_{j-1}-v_{j+1}\right)\left(u_j-v_j\right)\right)\\
		=&\frac{1}{2h^2}\left(\sum_{j=1}^{N}\left(u_j-u_{j-1}\right)^2-\sum_{j=1}^{N}\left(v_j-v_{j-1}\right)^2-\sum_{j=1}^{N}2\left(v_j-v_{j-1}\right)\left(u_j-v_j\right)-\sum_{j=1}^{N}2\left(v_j-v_{j+1}\right)\left(u_j-v_j\right)\right).
		\end{split}
		\end{align}
		For the last two terms in \eqref{VerifyGiEq3}, we use the periodicity of $U$ again and get 
		\begin{align*}
		\begin{split}
		&\sum_{j=1}^{N}2\left(v_j-v_{j-1}\right)\left(u_j-v_j\right)+\sum_{j=1}^{N}2\left(v_j-v_{j+1}\right)\left(u_j-v_j\right)\\
		=&\sum_{j=1}^{N}2\left(v_j-v_{j-1}\right)\left(v_{j-1}-v_j+u_j-v_{j-1}\right)+\sum_{j=1}^{N}2\left(v_j-v_{j+1}\right)\left(u_j-v_j\right)\\
		=&-\sum_{j=1}^{N}2\left(v_j-v_{j-1}\right)^2+\sum_{j=1}^{N}2\left(v_j-v_{j-1}\right)\left(u_j-v_{j-1}\right)+\sum_{j=1}^{N}2\left(v_j-v_{j+1}\right)\left(u_j-v_j\right)\\
		=&-\sum_{j=1}^{N}2\left(v_j-v_{j-1}\right)^2+\sum_{j=1}^{N}2\left(v_j-v_{j-1}\right)\left(u_j-v_{j-1}\right)-\sum_{j=1}^{N}2\left(v_{j}-v_{j-1}\right)\left(u_{j-1}-v_{j-1}\right)\\
		=&-\sum_{j=1}^{N}2\left(v_j-v_{j-1}\right)^2+\sum_{j=1}^{N}2\left(v_j-v_{j-1}\right)\left(u_j-u_{j-1}\right).
		\end{split}
		\end{align*}
		Thus, we have
		\begin{align}
		\label{VerifyGiEq5}
		\begin{split}
		&\left(\sum_{j=1}^{N}G_j(U)\right)-\left(\sum_{j=1}^{N}G_j(V)\right)-\left(\nabla\left(\sum_{j=1}^{N}G_j(V)\right)\right)^T\left(U-V\right)\\
		=&\frac{1}{2h^2}\left(\sum_{j=1}^{N}\left(u_j-u_{j-1}\right)^2+\sum_{j=1}^{N}\left(v_j-v_{j-1}\right)^2-2\sum_{j=1}^{N}\left(v_j-v_{j-1}\right)\left(u_j-u_{j-1}\right)\right)\\
		=&\frac{1}{2}\sum_{j=1}^{N}\left(\frac{u_j-u_{j+1}}{h}-\frac{v_j-v_{j-1}}{h}\right)^2.
		\end{split}
		\end{align}
		Therefore, \eqref{VerifyGiEq1}, \eqref{VerifyGiEq5}, and the periodicity of $U$ give \eqref{NumericalPoincare1D}. 
	\end{proof}	
}
}
{To guarantee that $\sum\limits_{j=1}^{N}u_j$ is constant in the Hessian Riemannian flow, we require $G$ to be invariant by translation, as stated next.
	\begin{hyp}
		\label{hypumass}
		For any $0\leq j \leq N$, we have $G_j(U+s)=G_j(U)$, where $U=(u_1,\dots,u_N)\in\mathbb{R}^N, s\in\mathbb{R}$, and  $U+s=\{u_1+s,\dots,u_N+s\}$.
	\end{hyp}
	\begin{remark} 
		By the definition of $\mathcal{L}_U$ and  $G_j(U+s)=G_j(U)$, we know $\mathcal{L}_U I=0$, where $I\in\mathbb{R}^n$ of which all components are $1$. Then, for any $M\in\mathbb{R}^N$, we have
		$$\sum_{j=1}^{N}\left(\mathcal{L}_U^*M\right)_j=\langle \mathcal{L}_U^*M,I\rangle=\langle M, \mathcal{L}_UI\rangle=0.$$ Then, $\sum\limits_{j=1}^{N}u_j(t)$ is invariant for all $t>0$ in \eqref{DynDiscreteHessianFlow}. 
\end{remark}}
{Next, we show that the flow defined by \eqref{DynDiscreteHessianFlow} converges to the solution of  $\widetilde{F}(M,U)=0$. Here, we show the convergence in one dimension. A similar proof holds for higher dimensions. }
\begin{pro}
\label{ConvergDiscreteHessianFlow}
	Let $d=1$. {Let} $(M^*,U^*)$ be the  solution  {of \eqref{DMFG-EH}}, where $M^*=(m_1^*,\dots,m_N^*)$ and $U^*=(u_1^*,\dots,u_N^*)$. Under Assumptions \ref{hypGConvex}-\ref{hypumass}, we have, as $t\rightarrow \infty$,
	$$u_j(t)\rightarrow u_j^*\  \text{and}\  
	m_j(t)\rightarrow m^*_j.$$
\end{pro}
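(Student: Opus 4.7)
The plan is to adapt the Lyapunov argument from the proof of Theorem \ref{ConvergenceApproxiU} to the discretized flow, using the numerical convexity estimate \eqref{NumericalPoincare1D} in place of the continuous Poincar\'e inequality. The central object is the Lyapunov function $\overline{\phi}$ introduced in \eqref{PhiBar}, which is non-negative and, by \eqref{MonotonicityPhiBar} established in the proof of Proposition \ref{LocalBoundedness}, monotonically decreasing. Its dissipation equals the sum of three non-negative terms: the $G$-convexity defect weighted by $M^*$, the symmetric defect weighted by $M$, and the entropy-type contribution $\frac{1}{kN}\sum_j (\ln m_j - \ln m_j^*)(m_j - m_j^*)$. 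Integrating \eqref{MonotonicityPhiBar} on $[0,+\infty)$ and using $\overline{\phi}\geq 0$, each of these three terms belongs to $L^1([0,+\infty))$; Lemma \ref{ConvergenceOfL1} then produces a sequence $t_i\to+\infty$ along which all three vanish.

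Using Assumption \ref{hypGStrictlyConvexity} with $V=U^*$, together with the strict positivity of the components of $M^*$, the first dissipation term dominates $\sum_j (h^{-1}(u_j-u_{j+1}) - h^{-1}(u_j^*-u_{j+1}^*))^2$ up to a positive constant, so the discrete gradients of $U(t_i)-U^*$ vanish in the limit. This forces $u_j(t_i)-u_j^*\to c$ for some $j$-independent constant $c$. At this point Assumption \ref{hypumass} becomes indispensable: the remark following that assumption shows that $\sum_j u_j(t)$ is conserved along the flow, and since $\sum_j u_j(0)=0=\sum_j u_j^*$ we obtain $c=0$, hence $u_j(t_i)\to u_j^*$ componentwise. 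Simultaneously, the vanishing of the entropy dissipation, combined with the boundedness of $m_j$ from Proposition \ref{LocalBoundedness} and the strict positivity of $m_j^*$, yields $m_j(t_i)\to m_j^*$ for each $j$. Plugging these back into \eqref{PhiBar} gives $\overline{\phi}(t_i)\to 0$, and monotonicity of $\overline{\phi}$ upgrades this to $\overline{\phi}(t)\to 0$.

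Convergence $u_j(t)\to u_j^*$ then follows from the bound $\frac{1}{2N}\sum_j(u_j(t)-u_j^*)^2\leq\overline{\phi}(t)$. To extract $m_j(t)\to m_j^*$, I would apply Lemma \ref{EntropyInequality} entrywise with $a=m_j^*$ and $z=m_j(t)$, sum over $j$, and use $\sum_j(m_j-m_j^*)=0$ together with $\sum_j m_j^*\ln(m_j^*/m_j(t))\leq N\overline{\phi}(t)$ to obtain $\epsilon\sum_j|m_j(t)-m_j^*|\leq N\overline{\phi}(t)-N(\epsilon+\ln(1-\epsilon))$ for every $0<\epsilon<1$; letting first $t\to\infty$ and then $\epsilon\to 0^+$ delivers the $\ell^1$ (hence componentwise) convergence, exactly as in the last step of the proof of Theorem \ref{ConvergenceApproxiU}. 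The principal obstacle is the translation-invariance issue: the convexity estimate \eqref{NumericalPoincare1D} only controls discrete gradients of $U$, so the invariance $\sum_j u_j\equiv 0$ preserved under Assumption \ref{hypumass} is essential to promote convergence up to a constant into convergence in the strong componentwise sense.
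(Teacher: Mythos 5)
Your proposal is correct and follows essentially the same route as the paper: the Lyapunov function $\overline{\phi}$ from \eqref{PhiBar}, the dissipation identity \eqref{MonotonicityPhiBar}, Lemma \ref{ConvergenceOfL1} plus Assumption \ref{hypGStrictlyConvexity} to kill the discrete gradients along a subsequence, Assumption \ref{hypumass} to fix the additive constant, monotonicity of $\overline{\phi}$ to upgrade subsequential to full convergence, and the entropy inequality of Lemma \ref{EntropyInequality} for the $\ell^1$ convergence of $M$. Your "convergence up to a constant, then mass conservation" step is exactly what the paper packages as the quotient-norm equivalence in Lemma \ref{quotientrepresentative}, and your explicit use of the strict positivity of $m_j^*$ to remove the weight in the first dissipation term is a point the paper leaves implicit.
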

\begin{proof}
	Let
	\begin{equation*}
	\overline{\phi}(t)=\frac{1}{N}\sum_{j=1}^{N}\left(m_j^*\ln\frac{m^*_j}{m_j(t)}\right)+\frac{1}{2N}\sum_{j=1}^{N}\left(u_j(t)-u_j^*\right)^2.
	\end{equation*}
	Because of \eqref{positivityofPhi}, $\overline{\phi}\geq 0$. 
	According to \eqref{MonotonicityPhiBar}, we have
	\begin{align*}
	&\frac{d \overline{\phi}}{dt}+\frac{1}{N}\sum_{j=1}^{N}\left(\left(G_j(U)-G_j(U^*)-\left(\mathcal{L}_{U^*}\left(U-U^*\right)\right)_j\right)m^*_j\right)\\
	&+\frac{1}{N}\sum_{j=1}^{N}\left(\left(G_j(U^*)-G_j(U)-\left(\mathcal{L}_U\left(U^*-U\right)\right)_j\right)m_j\right)\\
	&+\frac{1}{kN}\sum_{j=1}^{N}\left(\left(\ln m_j-\ln m_j^*\right)\left(m_j-m_j^*\right)\right)= 0.
	\end{align*}
	Using Lemma \ref{ConvergenceOfL1} and Assumption \ref{hypGStrictlyConvexity}, we can find a sequence, $\{t_i\}$, such that
	\begin{align}
	\label{DuConvergence}
	\sum_{j=1}^{N}\left(\frac{u_j(t_i)-u_{j+1}(t_i)}{h}-\frac{u_j^*-u_{j+1}^*}{h}\right)^2\rightarrow 0
	\end{align}
	and 
	$$\sum_{j=1}^{N}\left(\ln m_j(t_i)-\ln m_j^*\right)\left(m_j(t_i)-m_j^*\right)\rightarrow 0.$$
	Under Assumption \ref{hypumass}, we have $\sum\limits_{j=1}^{N}\left(u_j(t_i)-u_j^*\right)=0$. Then, combining \eqref{DuConvergence}, and Lemma \ref{quotientrepresentative} below, we conclude that
	$$\lim_{i\rightarrow +\infty}\left(u_j(t_i)-u_j^*\right)^2=0.$$
	Thus, we have
	$$\overline{\phi}(t_i)\rightarrow 0.$$
	Since $\overline{\phi}$ is decreasing, we know
	$$\overline{\phi}(t)\rightarrow 0.$$
	Thus, 
	$$\sum_{j=1}^{N}\left(u_j(t)-u_j^*\right)^2\rightarrow 0$$
	and
	$$\sum_{j=1}^{N}m_j^*\ln \frac{m_j^*}{m_j(t)}\rightarrow 0.$$
	Thus, $$u_j(t)\rightarrow u^*_j$$
	and $$m_j(t)\rightarrow m_j^*.$$
\end{proof}
\begin{lemma}
\label{quotientrepresentative}
Let $\{a_j\}, 0\leq j\leq N,$ be a sequence in $\mathbb{R}^N$ such $\sum\limits_{j=1}^{N}a_j=0$. Assume that $a_{N+1}=a_1$. {Then, there exists a constant $C>0$ such that }
\begin{equation}
\label{quotientrepresentativeequivalence}
\sum_{j=1}^{N}a_j^2\leq C\sum_{j=1}^{N}\left(a_{j+1}-a_j\right)^2.
\end{equation}  
\end{lemma}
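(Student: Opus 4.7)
The plan is to prove this as a discrete analogue of the Poincar\'{e} inequality on the circle, exploiting the fact that everything lives in the finite-dimensional space
\[
V=\Bigl\{a\in\mathbb{R}^N:\sum_{j=1}^{N}a_j=0\Bigr\},
\]
and that the right-hand side of \eqref{quotientrepresentativeequivalence} defines a quadratic form whose kernel is easy to identify.

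First, define the quadratic forms $Q(a)=\sum_{j=1}^{N}(a_{j+1}-a_j)^2$ and $\|a\|^2=\sum_{j=1}^{N}a_j^2$ on $\mathbb{R}^N$, with the periodic convention $a_{N+1}=a_1$ built in. Both are continuous, and $Q$ is positive semidefinite. I would then identify the kernel of $Q$: the equation $a_{j+1}=a_j$ for every $j$ forces $a$ to be constant, so $\ker Q$ is the one-dimensional subspace of constant vectors. The zero-mean constraint $\sum_j a_j=0$ defining $V$ intersects this kernel only at the origin, so $Q$ restricted to $V$ is positive definite.

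Next, I would use a compactness argument. The unit sphere $S=\{a\in V:\|a\|=1\}$ is compact (it is closed and bounded in finite dimensions), and $Q$ is continuous and strictly positive on $S$. Hence $\lambda:=\min_{a\in S}Q(a)>0$. Scaling gives $Q(a)\geq\lambda\|a\|^2$ for every $a\in V$, which is \eqref{quotientrepresentativeequivalence} with $C=1/\lambda$.

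There is no real obstacle here since the statement reduces to linear algebra in finite dimensions. A more quantitative alternative would be to diagonalize $Q$ using the discrete Fourier basis $e_k=(e^{2\pi ikj/N})_{j=1}^{N}$, which simultaneously diagonalizes the shift operator. The zero-mean condition removes the $k=0$ mode, while the difference operator acts by multiplication by $e^{2\pi ik/N}-1$, whose modulus squared on the remaining frequencies is bounded below by $4\sin^2(\pi/N)$. This yields the explicit constant $C=1/(4\sin^2(\pi/N))$, but for the application in the proof of Proposition \ref{ConvergDiscreteHessianFlow} only the existence of some $C>0$ is needed, so the compactness proof suffices.
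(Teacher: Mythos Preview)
Your proof is correct and is essentially the same argument as the paper's, just unpacked: the paper observes that $\sqrt{Q}$ and the standard $\ell^2$-norm are both norms on the finite-dimensional space $V\cong\mathbb{R}^N/\mathbb{R}$ and invokes equivalence of norms, whereas you verify positive definiteness of $Q$ on $V$ explicitly and then prove the equivalence by the usual compactness-of-the-sphere argument. Your Fourier remark giving the sharp constant $C=1/(4\sin^2(\pi/N))$ is a nice bonus the paper does not include.
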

\begin{proof}
Assume that $a_{N+1}=a_1$. We consider the linear subspace  $$\mathcal{D}=\left\{a=\left(a_1,\dots,a_N\right)\in \mathbb{R}^N\mid \sum\limits_{j=1}^{N}a_j=0\right\}$$ equipped with the standard $l^2$-norm. Then, $\mathcal{D}$ is isomorphic to the quotient space $\mathbb{R}^N/\mathbb{R}$. We notice that $\mathbb{R}^N/\mathbb{R}$ has another norm given by 
$$|a|_\diamond=\sum_{j=1}^{N}\left(a_{j+1}-a_j\right)^2, a \in \mathbb{R}^N/\mathbb{R}.$$ In addition, because all norms in a finite-dimensional linear space are equivalent, we conclude that \eqref{quotientrepresentativeequivalence} holds. 
\end{proof}
Finally, we record the proof of Theorem \ref{discreteExistenceConvergence}.
\begin{proof}[Proof (of Theorem \ref{discreteExistenceConvergence})]
	The global existence is given by Proposition \ref{GlobalExistence} and the convergence follows from Proposition \ref{ConvergDiscreteHessianFlow}.
\end{proof}
	
\section{Newton's Method for Effective Hamiltonians}
\label{section4}
Here, we explore the  connection between the Hessian Riemannian flow and Newton's method and construct a numerical scheme that, in our numerical tests, improves substantially the speed of the Hessian Riemannian flow. To motivate our method, 
we begin by discretizing  \eqref{DynDiscreteHessianFlow} using the implicit Euler method. Let $(M^j,U^j)$ represent the result of the $j$-th literation and $(M^0,U^0)$ be the initial value. The implicit Euler method computes $\left(M^{j+1},U^{j+1}\right)$ implicitly using the equation
\begin{equation}
\label{BEM}
\begin{bmatrix}
M^{j+1}\\ U^{j+1}
\end{bmatrix}-\begin{bmatrix}
M^{j}\\ U^{j}
\end{bmatrix}=-\xi \overline{F}\begin{bmatrix}
M^{j+1}\\ U^{j+1}
\end{bmatrix},
\end{equation}
where $\xi$ is the step size. Adding
$\frac{\xi}{2}\overline{F}\left(M^{j+1}, U^{j+1}\right)-\frac{\xi}{2}\overline{F}\left(M^j, U^j\right)$ to both sides of \eqref{BEM}, we get
\begin{align*}
&\begin{bmatrix}
M^{j+1}\\ U^{j+1}
\end{bmatrix}+\frac{\xi}{2}\overline{F}\begin{bmatrix}
M^{j+1}\\ U^{j+1}
\end{bmatrix}-\left(\begin{bmatrix}
M^{j}\\ U^{j}
\end{bmatrix}+\frac{\xi}{2}\overline{F}\begin{bmatrix}
M^{j}\\ U^{j}
\end{bmatrix}\right)\\
&=-\xi\left(\frac{1}{2}\overline{F}\begin{bmatrix}
M^{j+1}\\ U^{j+1}
\end{bmatrix}+\frac{1}{2}\overline{F}\begin{bmatrix}
M^{j}\\ U^{j}
\end{bmatrix}\right).
\end{align*}
The prior identity is the Crank-Nicolson scheme with a step size $\xi$ for the following ODE:
\begin{align}
\label{VCBEM}
\frac{d}{dt}\left(\begin{bmatrix}
M\\ U
\end{bmatrix}+\frac{\xi}{2}\overline{F}\begin{bmatrix}
M \\ U
\end{bmatrix}\right)=-\overline{F}\begin{bmatrix}
M \\ U
\end{bmatrix}.
\end{align}
Let {$\nabla \overline{F}\left(M,U\right)$} be the {Jacobian} matrix of $\overline{F}$ at $(M,U)$. Then, \eqref{VCBEM} is equivalent to
{\begin{align}
\label{CNM}
\begin{bmatrix}
\dot{M}\\ \dot{U}
\end{bmatrix}=-\left(I+\frac{\xi}{2}\nabla \overline{F}\begin{bmatrix}
M\\ U
\end{bmatrix}\right)^{-1}\overline{F}\begin{bmatrix}
M \\ U
\end{bmatrix}.
\end{align}}Here, we fix
two non-negative parameters $\kappa$ and $\tau$ and consider the following generalization of
the explicit Euler method for \eqref{CNM}:
\begin{align}
\label{NewtonExplicitDiscretization}
\begin{bmatrix}
M^{j+1}\\ U^{j+1}
\end{bmatrix}=\begin{bmatrix}
M^{j}\\ U^{j}
\end{bmatrix}-\left(\tau I+\kappa \nabla\overline{F}{\begin{bmatrix}
	M^j\\ U^j
	\end{bmatrix}}\right)^{-1}\overline{F}\begin{bmatrix}
M^j\\ U^j
\end{bmatrix}.
\end{align}
When $\kappa=1$ and $\tau=0$, we obtain Newton's method. When $\kappa=0$ and $\tau>0$, 
we get the explicit Euler scheme for \eqref{DynDiscreteHessianRiemannianFlow}. {Here, we do not have a theoretical proof that \eqref{NewtonExplicitDiscretization} preserves the non-negativity and the mass of $M$. However, in the numerical experiments of Section \ref{section6}, $M$ in \eqref{NewtonExplicitDiscretization} is always positive and the mean of $M$ is invariant if \eqref{NewtonExplicitDiscretization} converges. 
 }

\section{Numerical results}
\label{section6}
In this section, we discuss {some} numerical results.  Our algorithms were implemented in Mathematica 10 on MacBook Air (CPU: 1.6 GHz Intel Core i5; Memory: 4 GB 1600 MHz DDR3). For the Hessian Riemannian flow, we use the built-in routine, NDSolve, of Mathematica to solve \eqref{DynDiscreteHessianFlow}. For Newton's method, we use the iteration given in \eqref{NewtonExplicitDiscretization}.

\begin{remark}
To improve the numerical stability, we  solve  \eqref{DynDiscreteHessianFlow} in the following equivalent form. Let $W=\left(w_1,\dots,w_N\right)=\left(\ln m_1,\dots,\ln m_N\right)$ and $e^W=\left(m_1,\dots,m_N\right)$. Denote the initial value by $(M^0,U^0)=\left(m_1^0,\dots,m_N^0,u_1^0, \dots, u_N^0\right)$.  We transform \eqref{DynDiscreteHessianFlow} into 
\begin{align*}
\begin{cases}
\begin{bmatrix}
\dot{W}\\\dot{U}
\end{bmatrix}=-\begin{bmatrix}
\left(-G_1(U)+\frac{\sum\limits_{i=1}^{N}\left(e^{w_i}G_i(U)-\frac{1}{k}e^{w_i}w_i\right)}{\sum \limits_{i=1}^{N}e^{w_i}}+\frac{1}{k}w_1\right)\\
\dots\\
\left(-G_N(U)+\frac{\sum\limits_{i=1}^{N}\left(e^{w_i}G_i(U)-\frac{1}{k}e^{w_i}w_i\right)}{\sum \limits_{i=1}^{N}e^{w_i}}+\frac{1}{k}w_N\right)\\
\left(\mathcal{L}_U^*e^{W}\right)_1\\
\dots\\
\left(\mathcal{L}_U^*e^{W}\right)_N
\end{bmatrix},\\
W(0)=\left(\ln m_1^0,\dots,\ln m_N^0\right), U(0)=U^0.
\end{cases}
\end{align*}
This formulation has the advantage that $M=e^{W}$ is automatically positive. While the Hessian Riemannian flow preserves positivity, small truncation errors sometimes give rise to negative values of $M$. Without the above transformation, this would cause serious numerical difficulties. 
\end{remark}

\subsection{One-dimensional case}
{For $d=1$, let $H$ be as in \eqref{QuadHami}.}
{In this case, the analytic value of the effective Hamiltonian is given in \cite{cacace2016generalized} by 
\begin{align*}
\overline{H}(P)=\begin{cases}
1 & \text{when} \, |P|\leq P_0,\\
c & \text{when} \  |P|>P_0,\  \text{and $c$ is given by}\  |P|=\int_{0}^{1}\left(\sqrt{2(\sin(2\pi s)+c)}\right) ds,
\end{cases}
\end{align*}
where $P_0=\int_{0}^{1}\left(\sqrt{2(\sin(2\pi s)+1)}\right)ds=\frac{4}{\pi}$.
}

{{Then, we approximate $H$ by $G$ given in \eqref{DefGi}.} We also use similar schemes for $H$ in other examples. {We see that $G_i$ satisfies Assumptions \ref{hypGConvex}-\ref{hypumass}}.

In the algorithms, we use the initial value, $(M^0,U^0)=(m_1^0,\dots,m_N^0,u_1^0,\dots,u_N^0)$, where $m_i^0=1+0.9\cos(2\pi x_i)$ and $u_i^0=0.2\cos(2\pi x_i)$. For Newton's method, {as observed in \eqref{NewtonExplicitDiscretization}, there is a balance between a Newton method and a simple explicit descent method. In particular, $I$ can be viewed as a regularizing term at the points where the Jacobian $\nabla \overline{F}$ is nearly singular. Thus, $\tau I+\kappa \nabla\overline{F}$ may not be invertible if $\tau$ is small. Here,} we choose $\tau =\kappa=1$. 

Figure \ref{ExplicitHbarApproximatedHbar} plots the effective Hamiltonians versus their approximated values calculated using the Hessian Riemannian flow (HRF) and  Newton's method (NM). 
Figure \ref{EvolutionOfTheHessianflow} shows the  evolution of  $\overline{\boldsymbol{H}}$, $\boldsymbol{m}$, and $\boldsymbol{u}$ for $P=0.5$ and {$k=10^4$}. In Figure \ref{ExplicitHbarApproximatedHbar}, we see that our method is extremely accurate away from the flat part of the effective Hamiltonian. In the flat part, the Mather measure  corresponding to the different values of $P$
is not strictly positive (see, for example, Figures \ref{2B} and \ref{2e} at the terminal time) and the logarithmic term seems to slow the convergence speed. 
{We also notice in Figure \ref{ExplicitHbarApproximatedHbar} that as $k$ increases, we get more accurate approximations to the flat part of the effective Hamiltonian. Meanwhile, in the experiments, we find that \eqref{NewtonExplicitDiscretization} preserves the mass and the non-negativity of $m$, which also holds for the remaining cases tested below.}
\begin{figure}[h]
	\begin{subfigure}[b]{0.3\textwidth}
		\includegraphics[width=\textwidth]{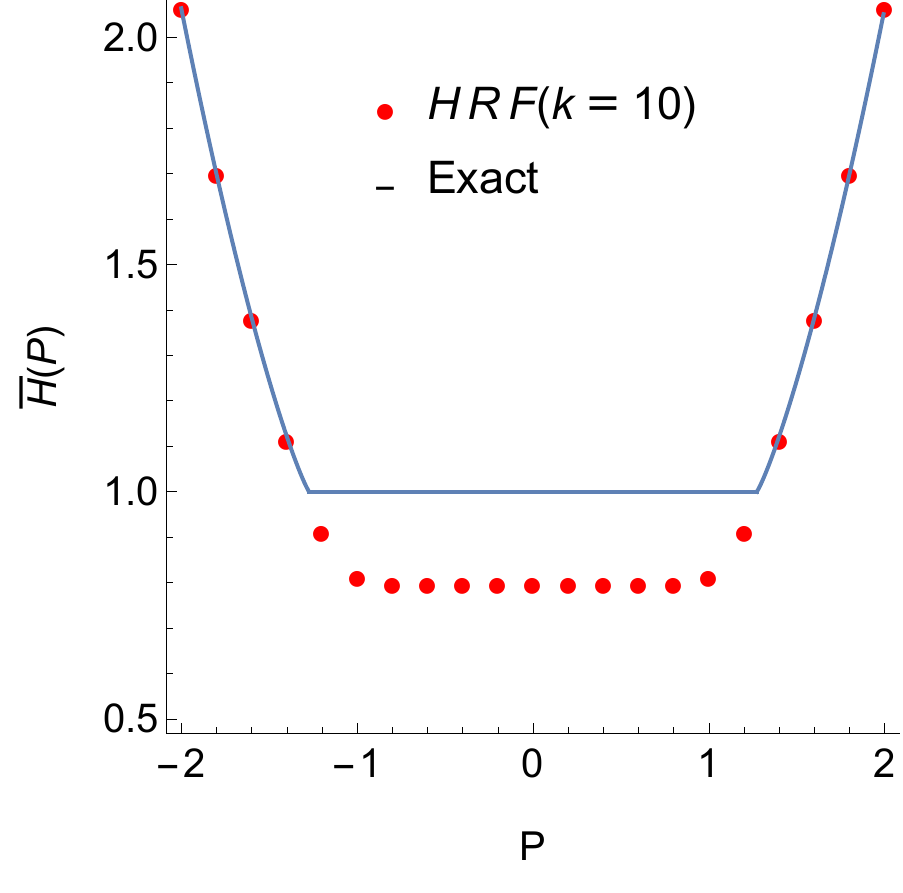}
		\caption{HRF ($k=10$)}
	\end{subfigure}
	\begin{subfigure}[b]{0.3\textwidth}
		\includegraphics[width=\textwidth]{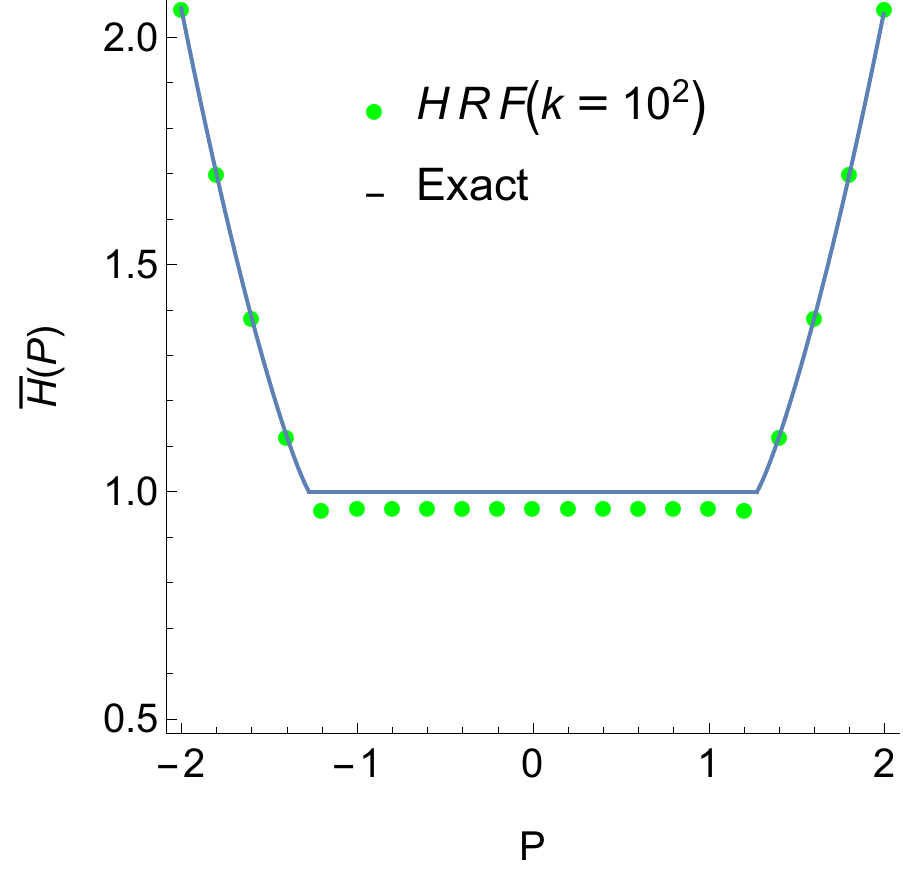}
		\caption{HRF ($k=10^2$)}
	\end{subfigure}
	\begin{subfigure}[b]{0.3\textwidth}
		\includegraphics[width=\textwidth]{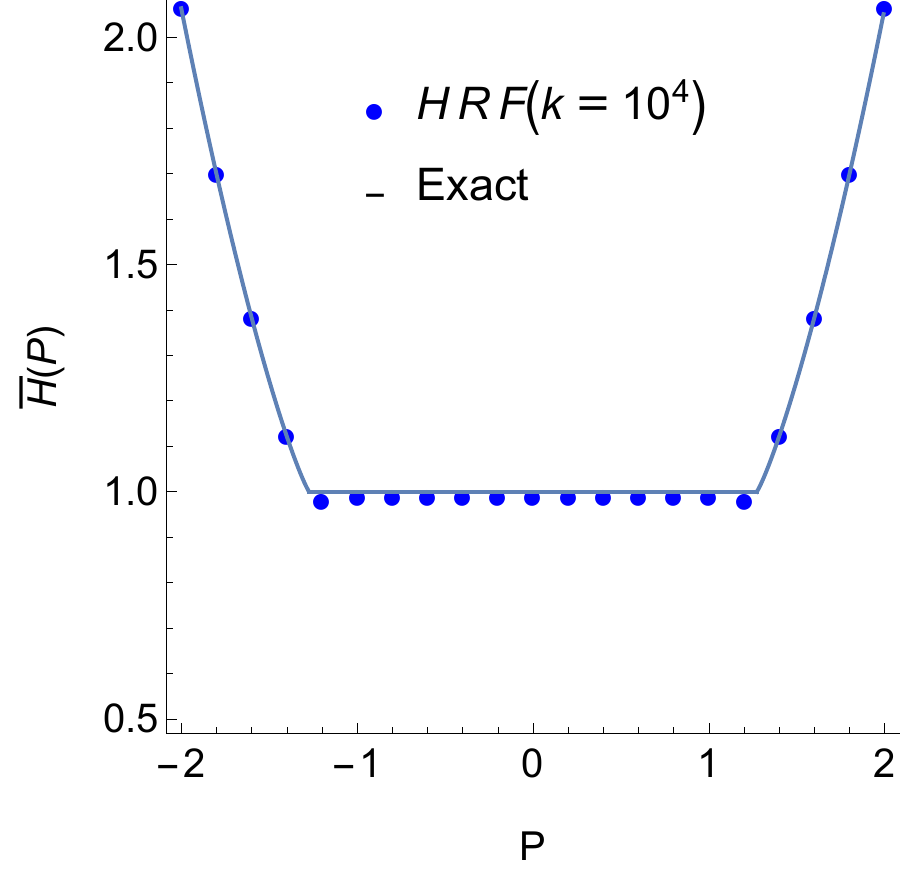}
		\caption{HRF ($k=10^4$)}
	\end{subfigure}
\\
	\begin{subfigure}[b]{0.3\textwidth}
		\includegraphics[width=\textwidth]{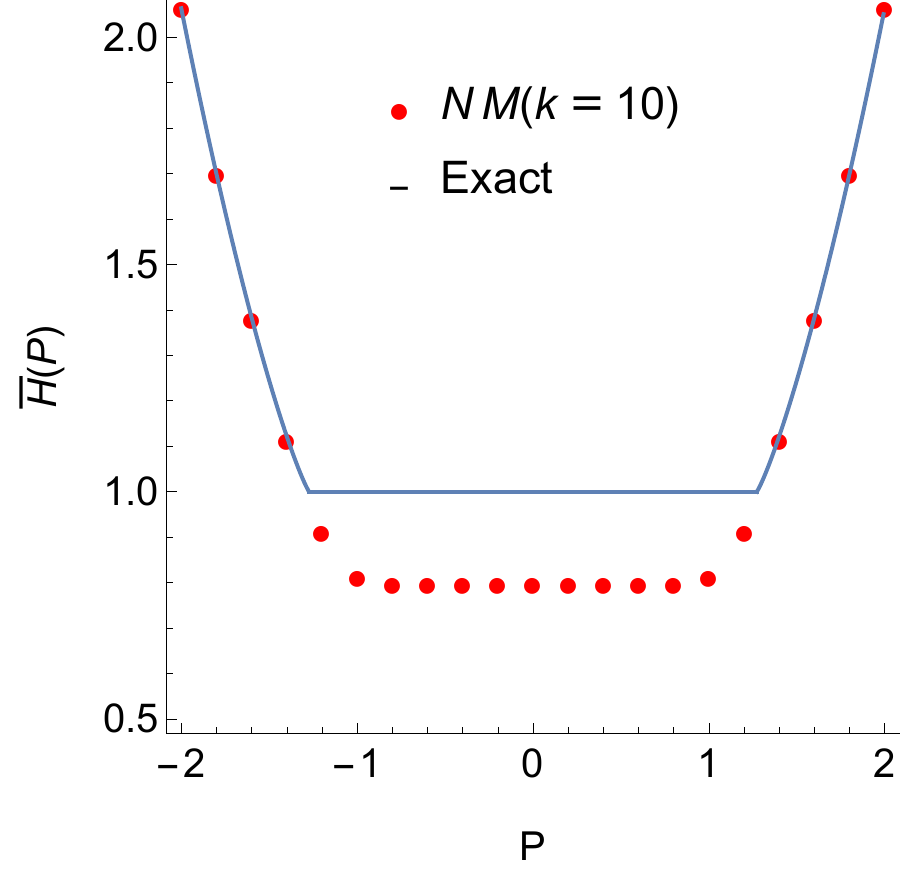}
		\caption{NM ($k=10$)}
	\end{subfigure}
	\begin{subfigure}[b]{0.3\textwidth}
		\includegraphics[width=\textwidth]{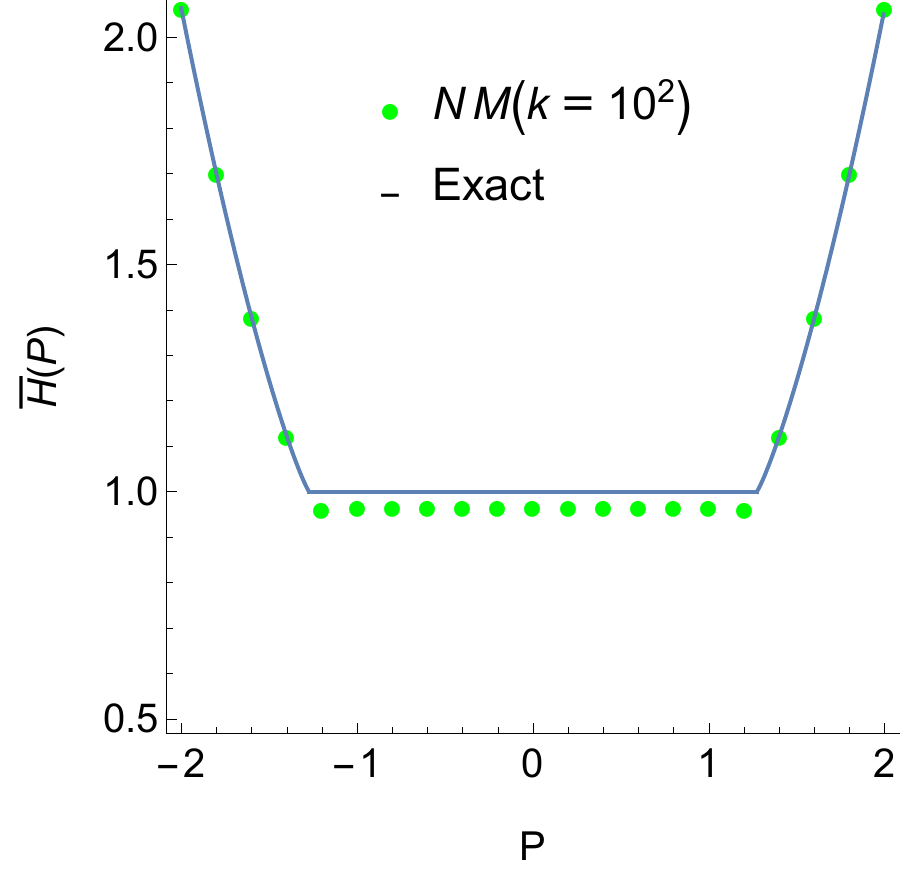}
		\caption{NM ($k=10^2$)}
	\end{subfigure}
	\begin{subfigure}[b]{0.3\textwidth}
		\includegraphics[width=\textwidth]{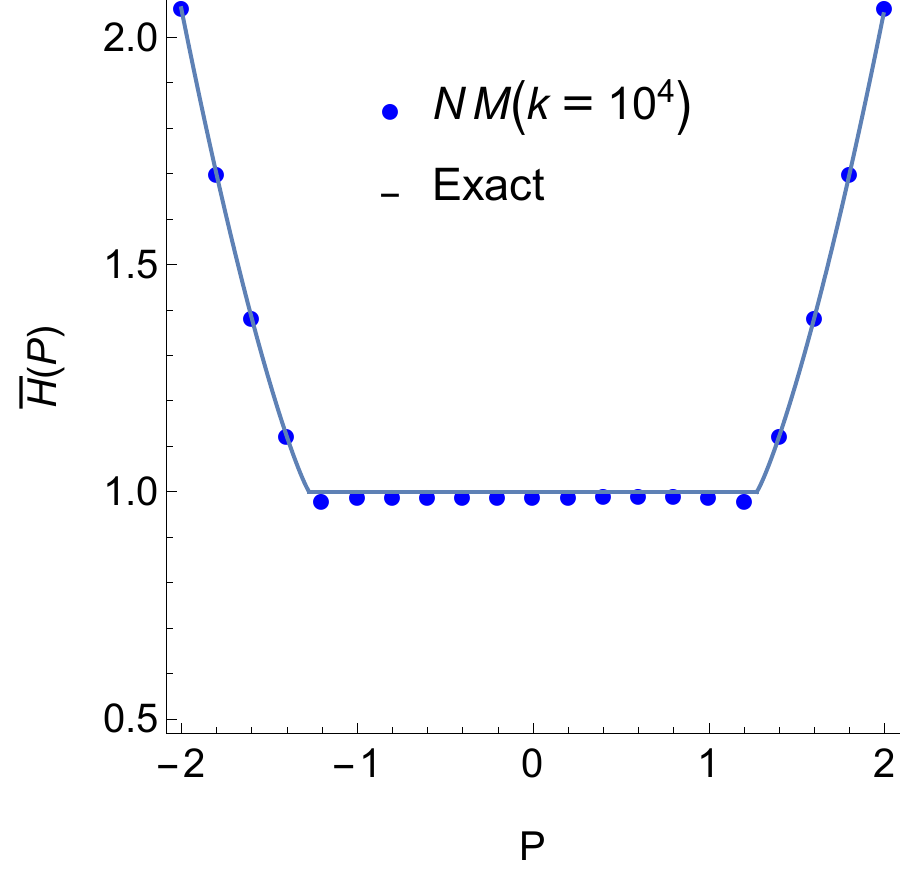}
		\caption{NM ($k=10^4$)}
	\end{subfigure}
	\caption{$\overline{H}$ vs. $\overline{\boldsymbol{H}}$, where $\overline{H}$ is the exact value of the effective Hamiltonian and $\overline{\boldsymbol{H}}$ its approximations given by the
		Hessian Riemannian flow (HRF),
		\eqref{ApproxHessianMonotoneFlow}, and Newton's method (NM), \eqref{NewtonExplicitDiscretization}.}
	\label{ExplicitHbarApproximatedHbar}
\end{figure}

\begin{figure}[h]
	\begin{subfigure}[b]{0.3\textwidth}
		\includegraphics[width=\textwidth]{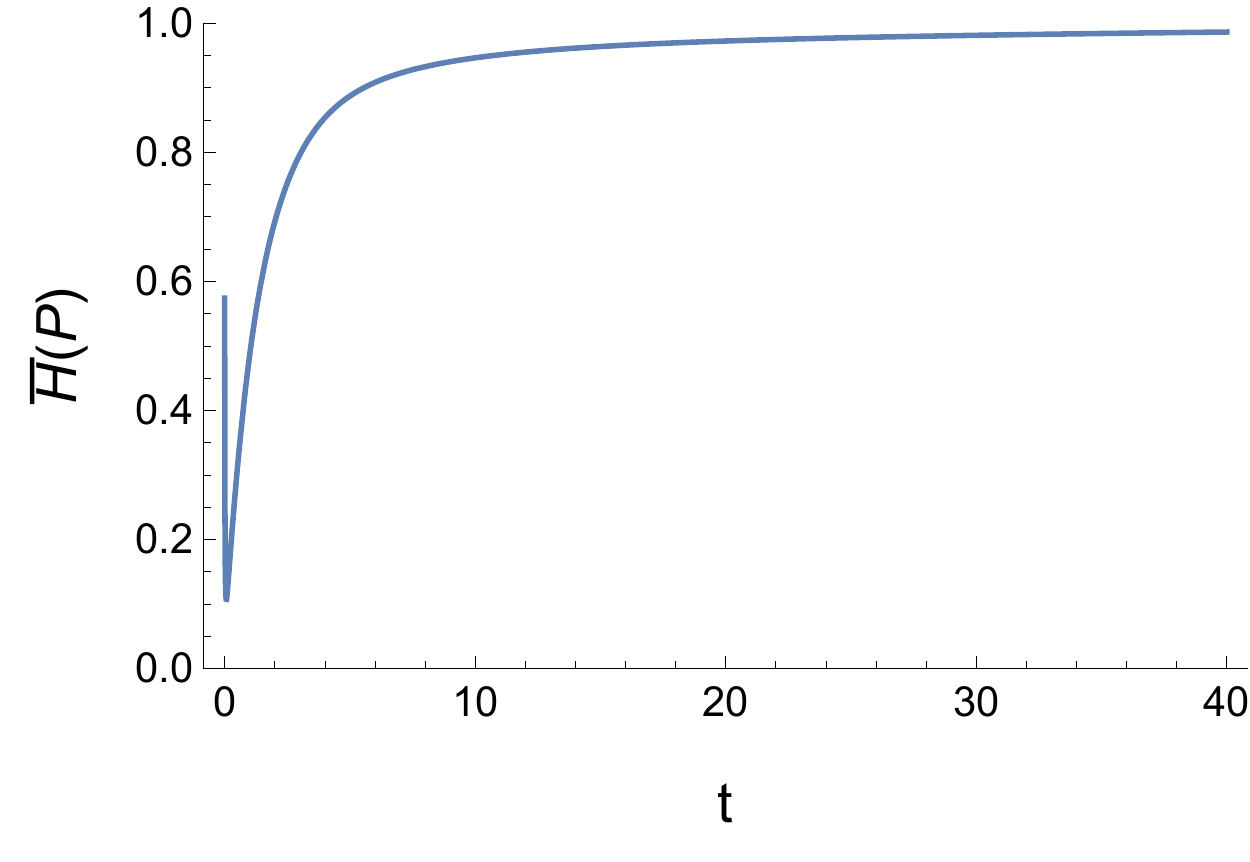}
		\caption{ $\overline{\boldsymbol{H}}(P)$ (HRF)}
	\end{subfigure}
	\begin{subfigure}[b]{0.3\textwidth}
		\includegraphics[width=\textwidth]{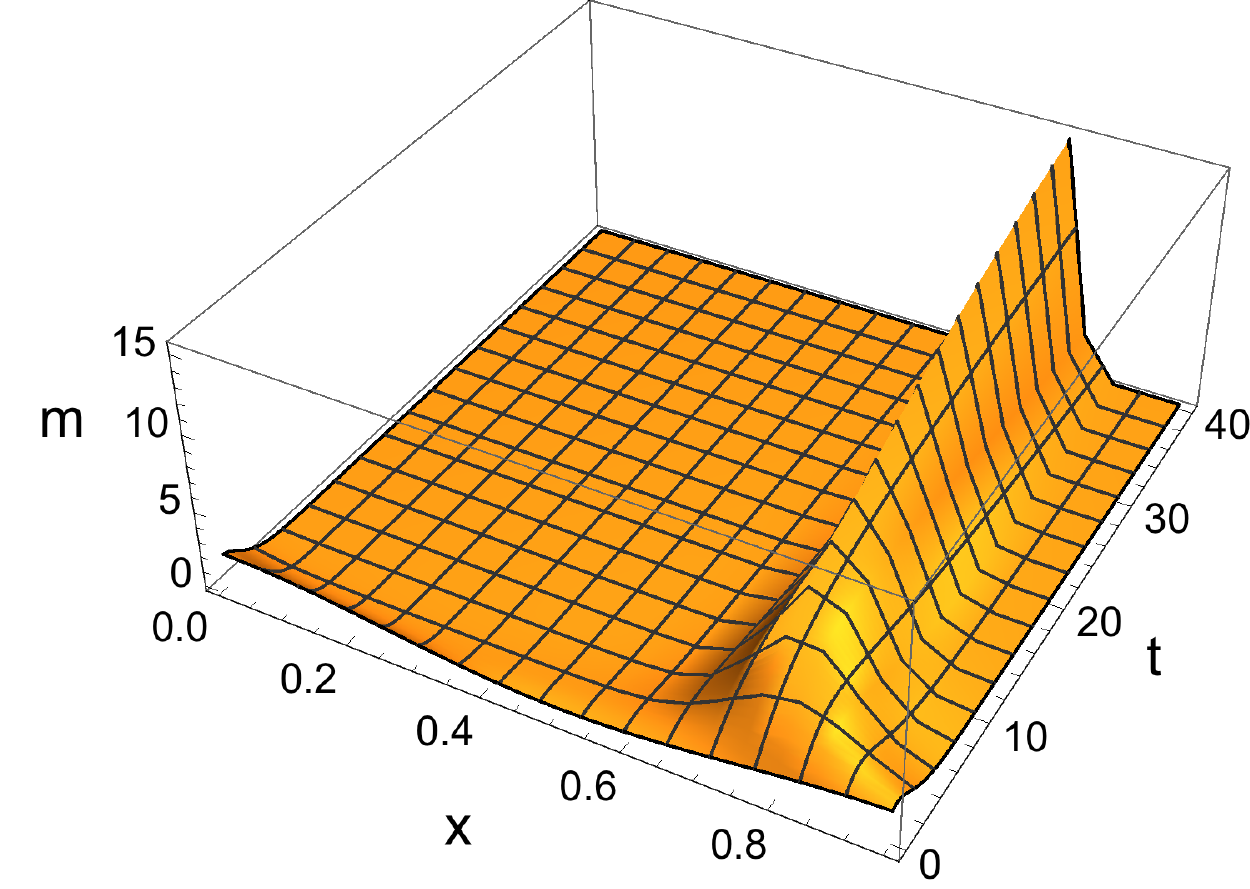}
		\caption{$\boldsymbol{m}$ (HRF)}
		\label{2B}
	\end{subfigure}
	\begin{subfigure}[b]{0.3\textwidth}
		\includegraphics[width=\textwidth]{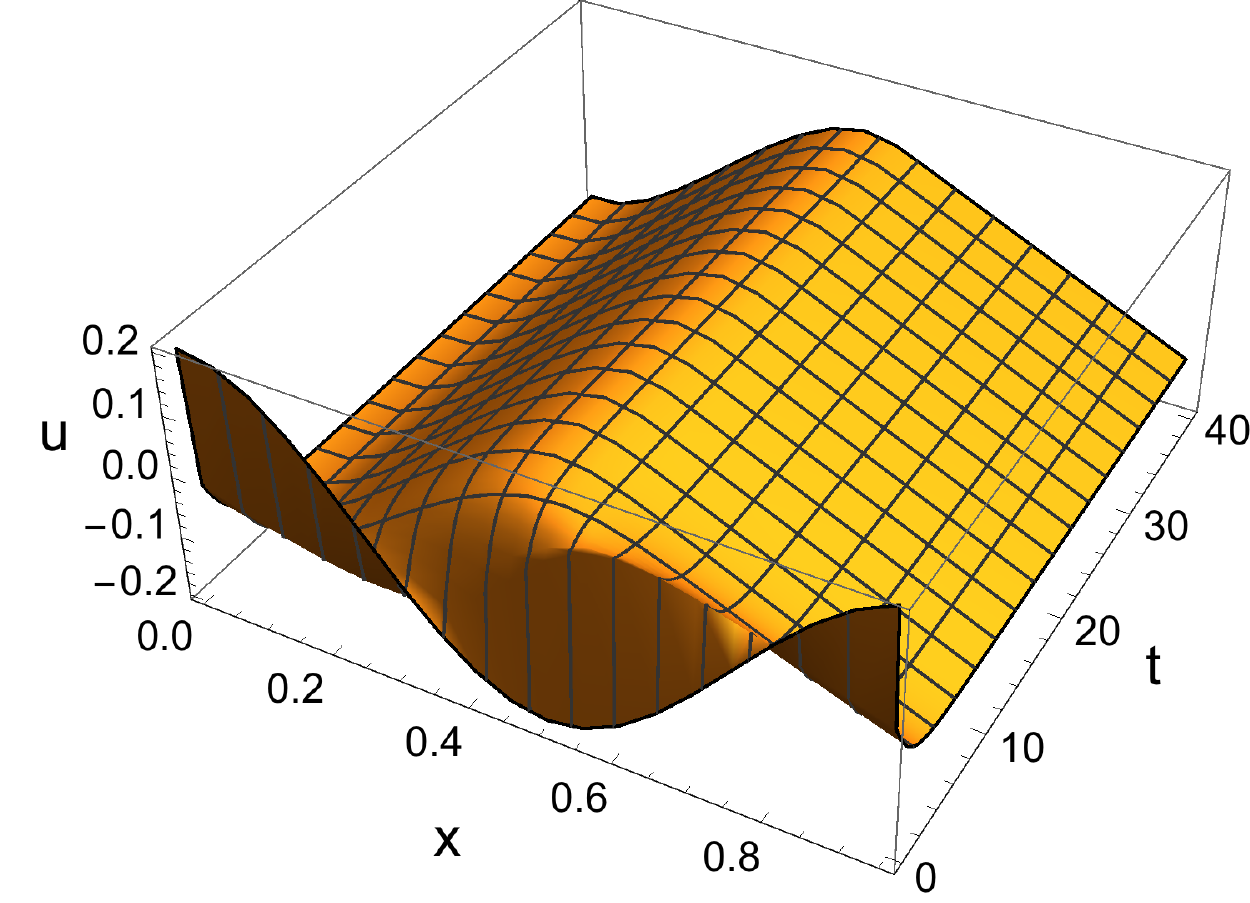}
		\caption{$\boldsymbol{u}$ (HRF)}
	\end{subfigure}
\\
	\begin{subfigure}[b]{0.3\textwidth}
		\includegraphics[width=\textwidth]{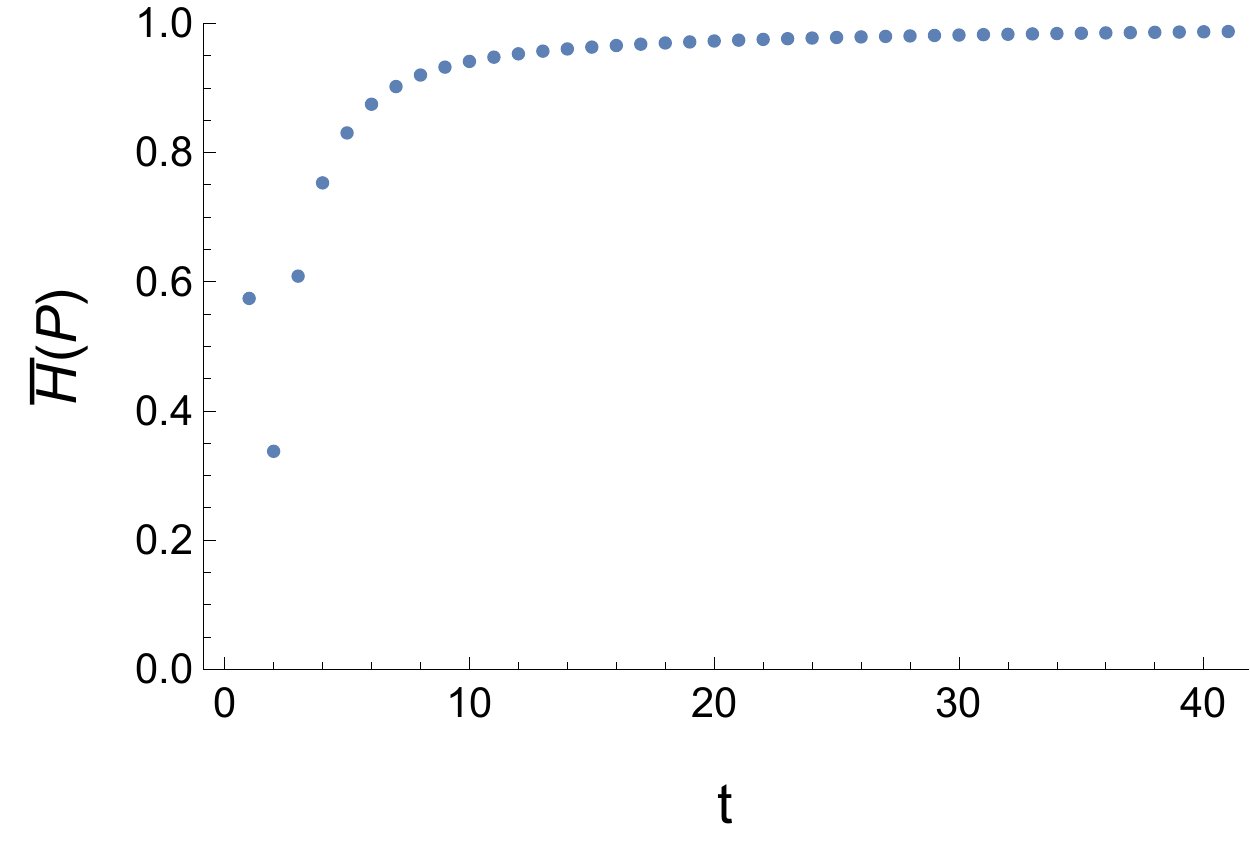}
		\caption{$\overline{\boldsymbol{H}}(P)$ (NM)}
	\end{subfigure}
	\begin{subfigure}[b]{0.3\textwidth}
		\includegraphics[width=\textwidth]{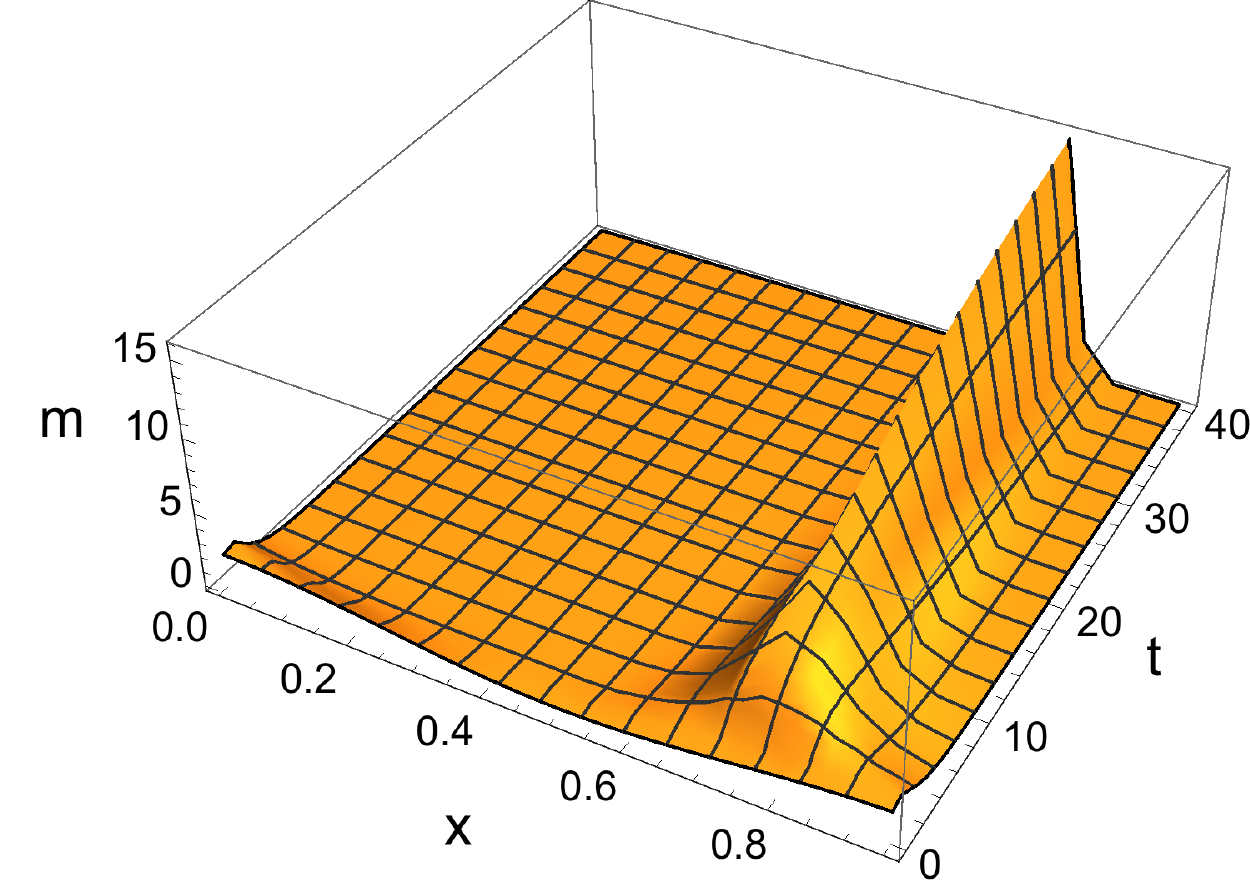}
		\caption{$\boldsymbol{m}$ (NM)}
		\label{2e}
	\end{subfigure}
	\begin{subfigure}[b]{0.3\textwidth}
		\includegraphics[width=\textwidth]{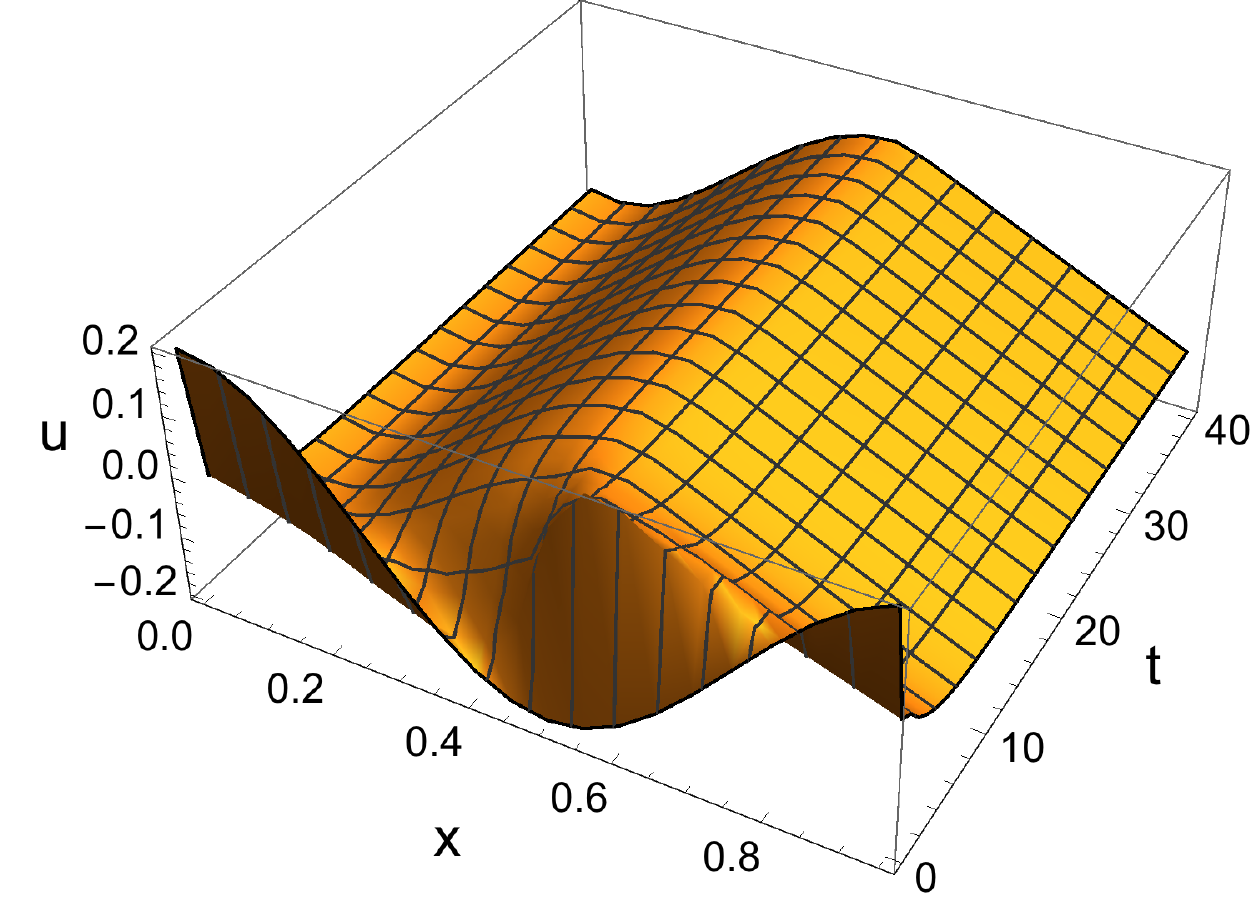}
		\caption{$\boldsymbol{u}$ (NM)}
	\end{subfigure}
	\caption{Numerical solutions of the
		Hessian Riemannian flow (HRF),
		\eqref{ApproxHessianMonotoneFlow}, and Newton's method (NM), \eqref{NewtonExplicitDiscretization}, for {$k=10^4$}.}
	\label{EvolutionOfTheHessianflow}
\end{figure}

To illustrate the convergence of our methods {in time}, we introduce error functions measuring the difference between the numerical result and the exact solution of \eqref{ApproxiMFG}. Let $(\boldsymbol{m}(t), \boldsymbol{u}(t), \overline{\boldsymbol{H}}(t))$ denote either the solution of the Hessian Riemannian flow or Newton's method. Besides, let $(m^*,u^*)$ {be} the  solution of \eqref{ApproxiMFG}  and $\overline{H}^*$ be the corresponding effective Hamiltonian. Inspired by Theorem  \ref{ConvergenceApproxiU}, we define errors:
\begin{align*}
u_{error}(t)=\int_{0}^{1}\left|\boldsymbol{u}(t)-u^*\right|^2dx,\\ 
m_{error}(t)=\int_{0}^{1}\left|\boldsymbol{m}(t)-m^*\right|dx,
\end{align*}
and 
\begin{align*}
\overline{H}_{error}(t)=\left|\overline{\boldsymbol{H}}(t)-\overline{H}^*\right|.
\end{align*}
Here, we use $\boldsymbol{u}(T), \boldsymbol{m}(T), \overline{\boldsymbol{H}}(T)$, where $T$ is the terminal time, to approximate $u^*, m^*, \overline{H}^*$. 
In the simulations, we choose {$k=10^4$, $P=0.5$, and $T=40$}. Figures \ref{HessianflowValidation2}  shows the evolution of the errors for the Hessian Riemannian flow and for  Newton's method. We see that the errors decrease exponentially.   

{Next, we study the convergence of  $u^k$ and $m^k$ as $k\rightarrow +\infty$.  For \eqref{ApproxiMFG} with $H$ given in \eqref{QuadHami}, Theorem 1.2 and Corollary 1.3 in  \cite{GIMY} show that $u^k$ converges and $m^k$ weakly converges if $P\not=P_0$. 	
However, the convergence is not known when $P=P_0$. Here, we examine numerically the convergence of $u^k$ and $m^k$ as $k\rightarrow +\infty$ when $P=P_0=\frac{4}{\pi}$. {We denote by  $(m^*,u^*,\overline{H}^*)$ the solution to \eqref{1dsyspr}.
	 According to Theorem 1.2 and Corollary 1.3 of  \cite{GIMY}, $m^*=\delta_{x=\frac{3}{4}}$, which is a Dirac delta concentrated in $x=\frac{3}{4}$, $u^*_x(x)=\sqrt{2\max\{C(P)+\sin(2\pi x),0\}^2}-P$ for any $x\in [0,1]$, where $C(P)$ is the unique number satisfying $P=\int_{0}^{1}\sqrt{2\max\{C(P)+\sin(2\pi x),0\}^2}dx$, and $\overline{H}^*=1$. Given a fixed $k$, we compute numerically the solution to \eqref{ApproxHessianMonotoneFlow} and denote by  $(\boldsymbol{m}^k(T),\boldsymbol{u}^k(T),\boldsymbol{\overline{H}}^k(T))$ the value of the numerical solution to \eqref{ApproxHessianMonotoneFlow} evaluating at time $T$. Here, we set $T=40$. Since $m^*$ is a Dirac delta, we only consider the errors for $\boldsymbol{u}^k(T)$ and $\boldsymbol{\overline{H}}^k(T)$. We compute and plot in Figure \ref{HessianflowKConv} the errors
	 \begin{align*}
	 u_{err}(k)=\int_0^1\left|\boldsymbol{u}^k(T)-u^*\right|^2dx
	 \end{align*}
	 and
	 \begin{align*}
	 \overline{H}_{err}(k)=\left|\boldsymbol{\overline{H}}^k(T)-\overline{H}^*\right|
	 \end{align*}
	 as $k$ increases and plot  $\boldsymbol{m}^k(T)$ for $T=40$ and $k=10^4$.  We see that $(\boldsymbol{u}^{k}(T), \boldsymbol{\overline{H}}^{k}(T))$ appears to converge as $k$ increases and $\boldsymbol{m}^k(T)$ behaves like $\delta_{x=\frac{3}{4}}$ when $T=40$ and $k=10^4$. 
 }

\begin{figure}
	\begin{subfigure}[b]{0.3\textwidth}
		\includegraphics[width=\textwidth]{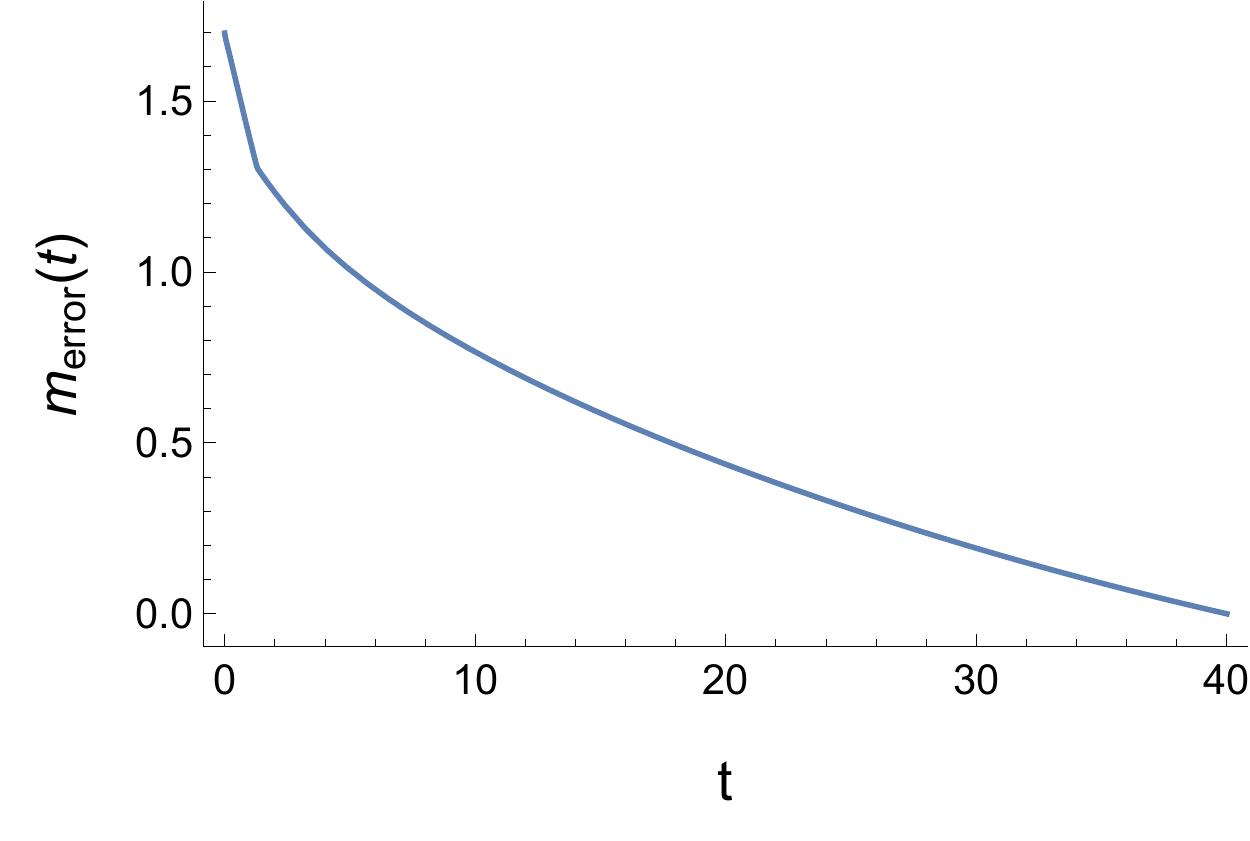}
		\caption{$m_{error}$ (HRF)}
	\end{subfigure}
	\begin{subfigure}[b]{0.3\textwidth}
		\includegraphics[width=\textwidth]{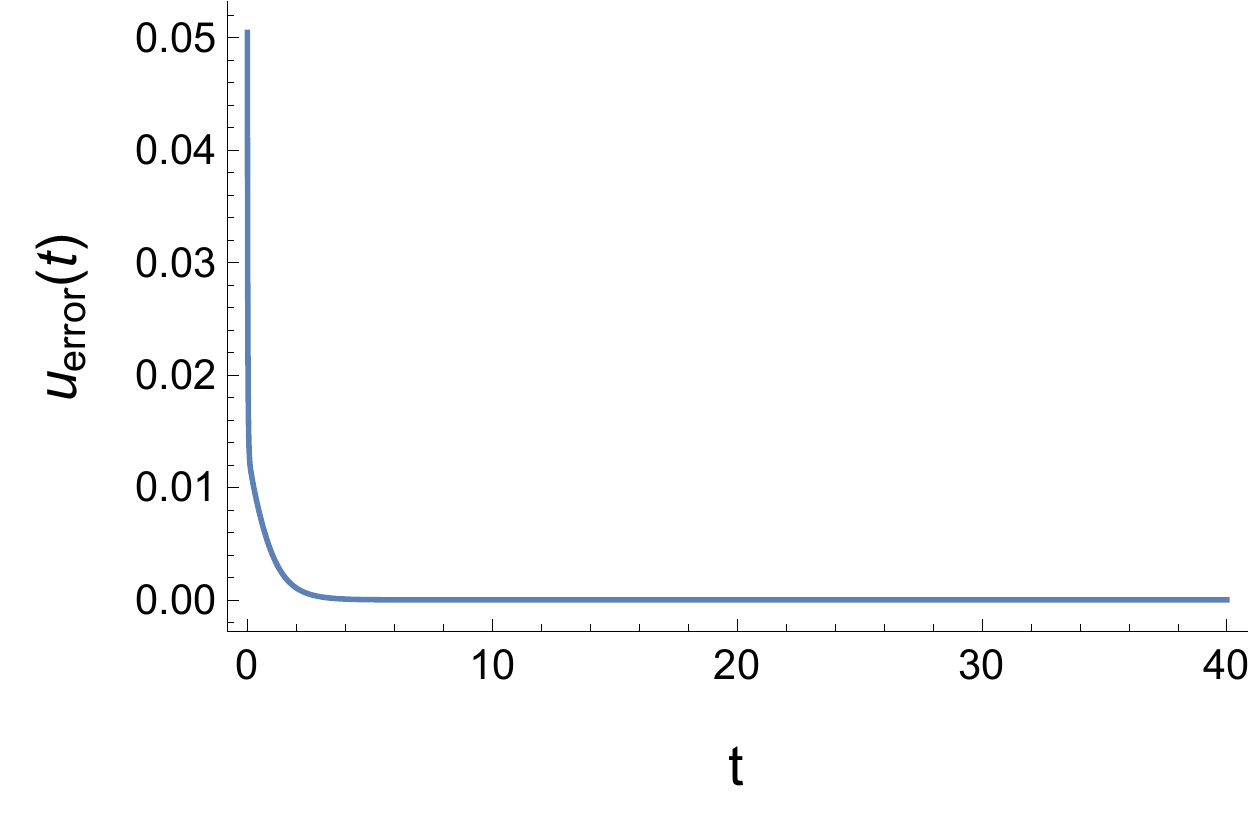}
		\caption{$u_{error}$ (HRF)}
	\end{subfigure}
	\begin{subfigure}[b]{0.3\textwidth}
		\includegraphics[width=\textwidth]{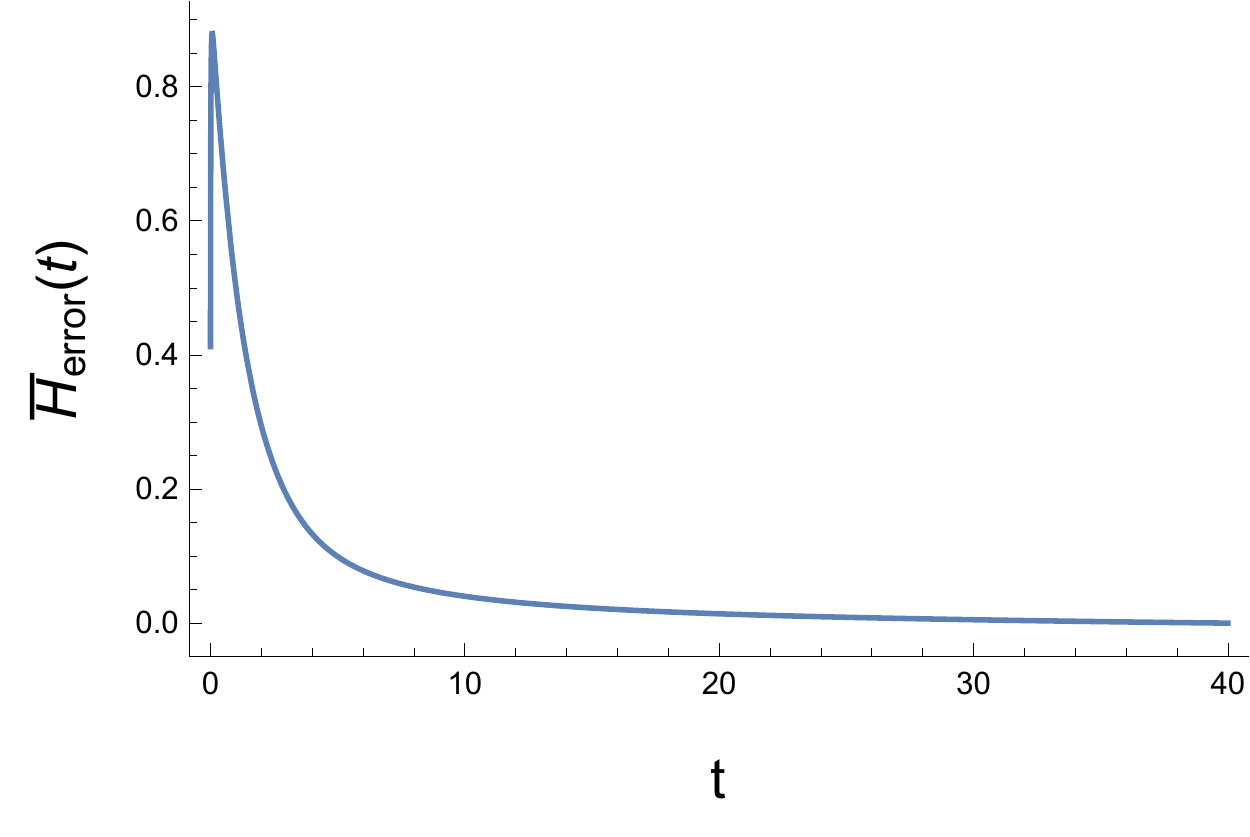}
		\caption{$\overline{H}_{error}$ (HRF)}
	\end{subfigure}
\\
	\begin{subfigure}[b]{0.3\textwidth}
		\includegraphics[width=\textwidth]{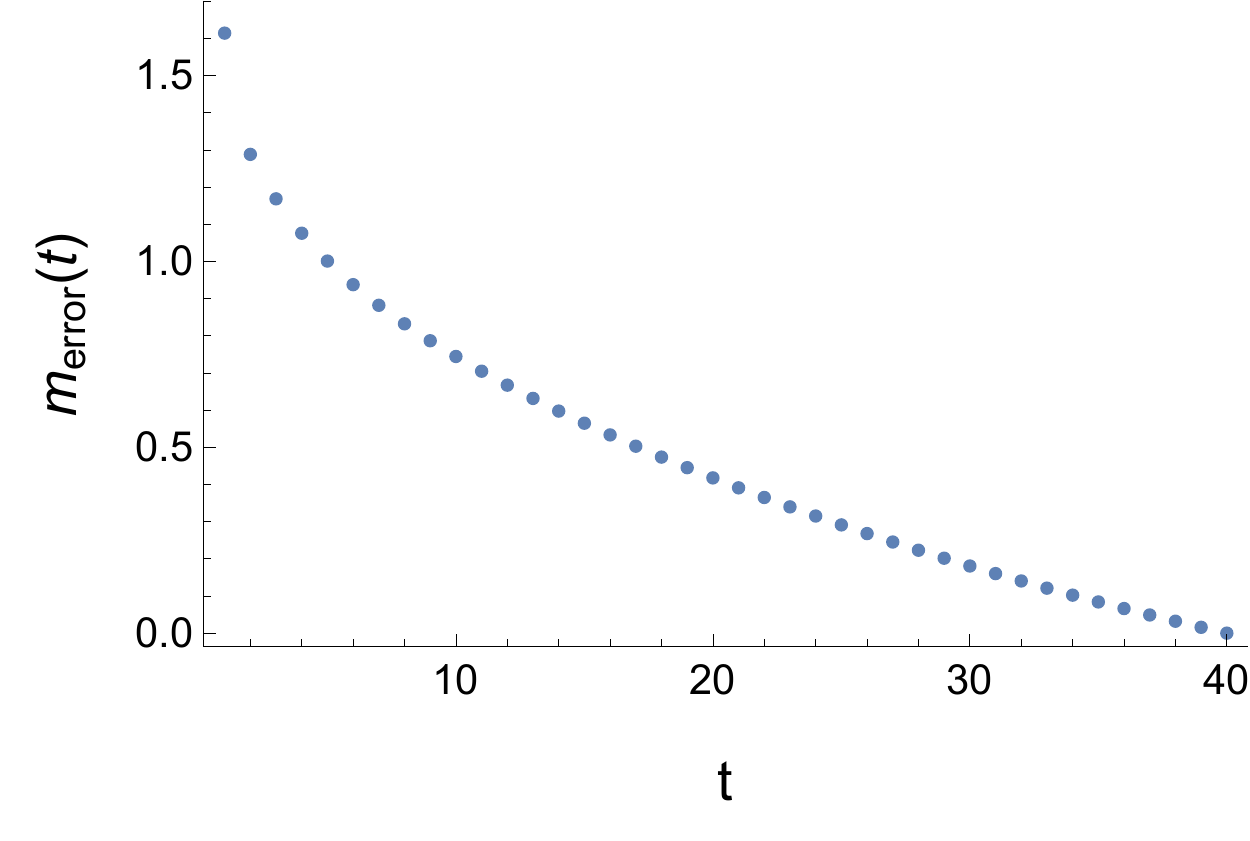}
		\caption{$m_{error}$ (NM)}
	\end{subfigure}
	\begin{subfigure}[b]{0.3\textwidth}
		\includegraphics[width=\textwidth]{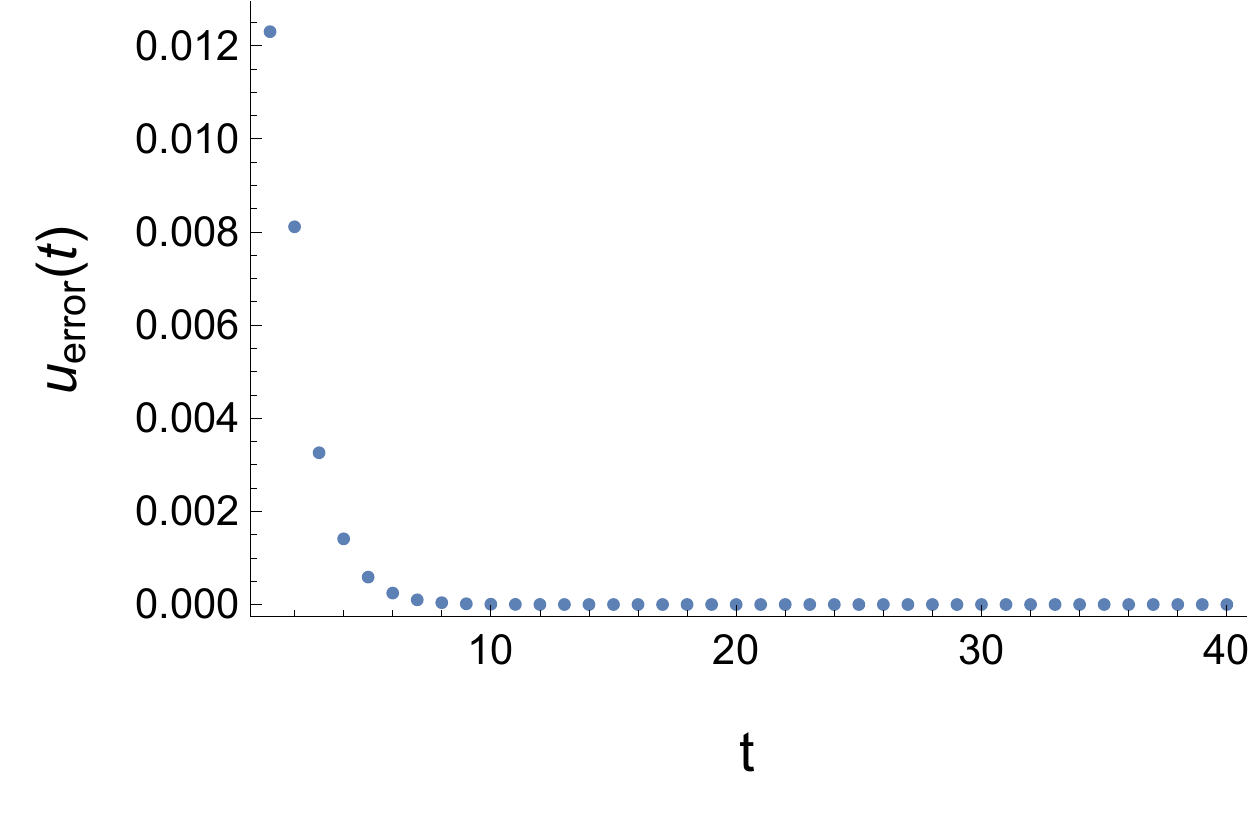}
		\caption{$u_{error}$ (NM)}
		\label{HessianflowValidation2:e}
	\end{subfigure}
	\begin{subfigure}[b]{0.3\textwidth}
		\includegraphics[width=\textwidth]{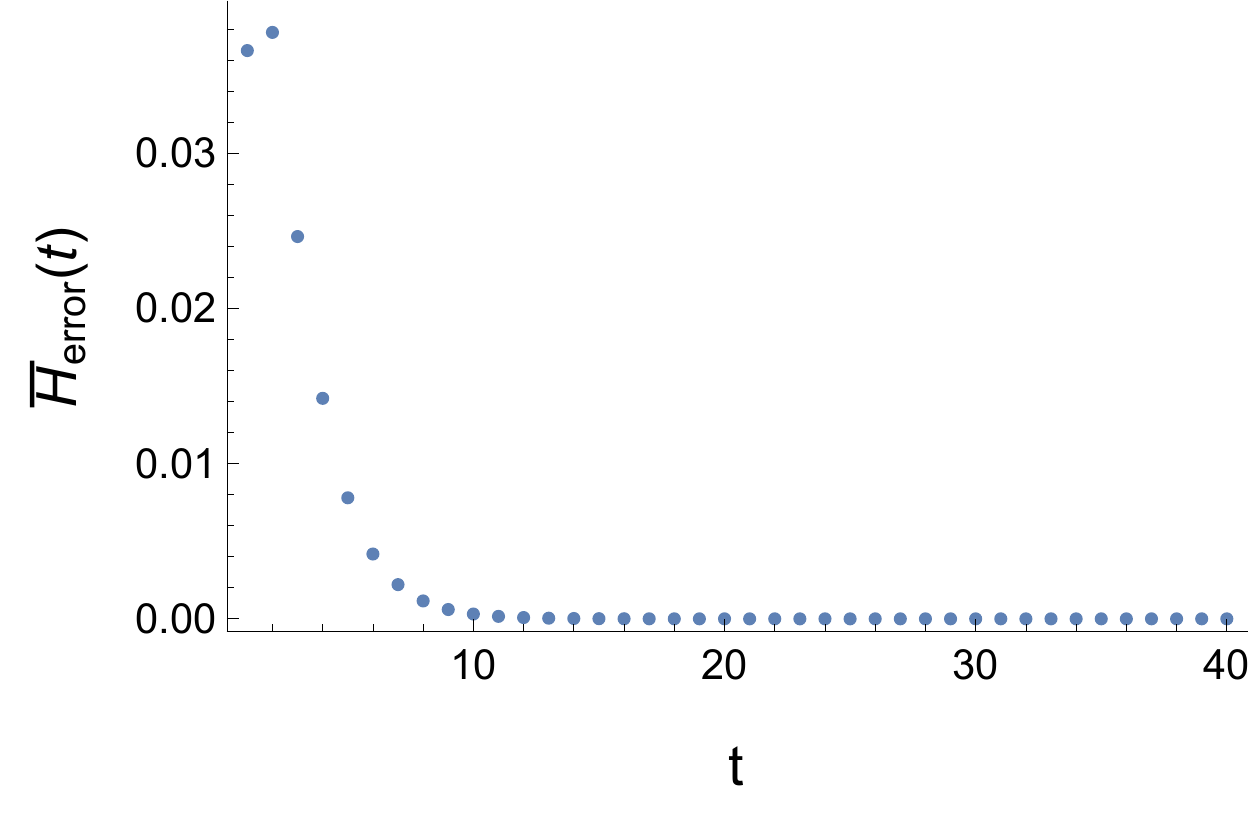}
		\caption{$\overline{H}_{error}$ (NM)}
	\end{subfigure}
	\caption{Evolutions of errors of the
		Hessian Riemannian flow (HRF),
		\eqref{ApproxHessianMonotoneFlow}, and Newton's method (NM), \eqref{NewtonExplicitDiscretization}.}
	\label{HessianflowValidation2}
\end{figure}

\begin{figure}
	\begin{subfigure}[b]{0.3\textwidth}
		\includegraphics[width=\textwidth]{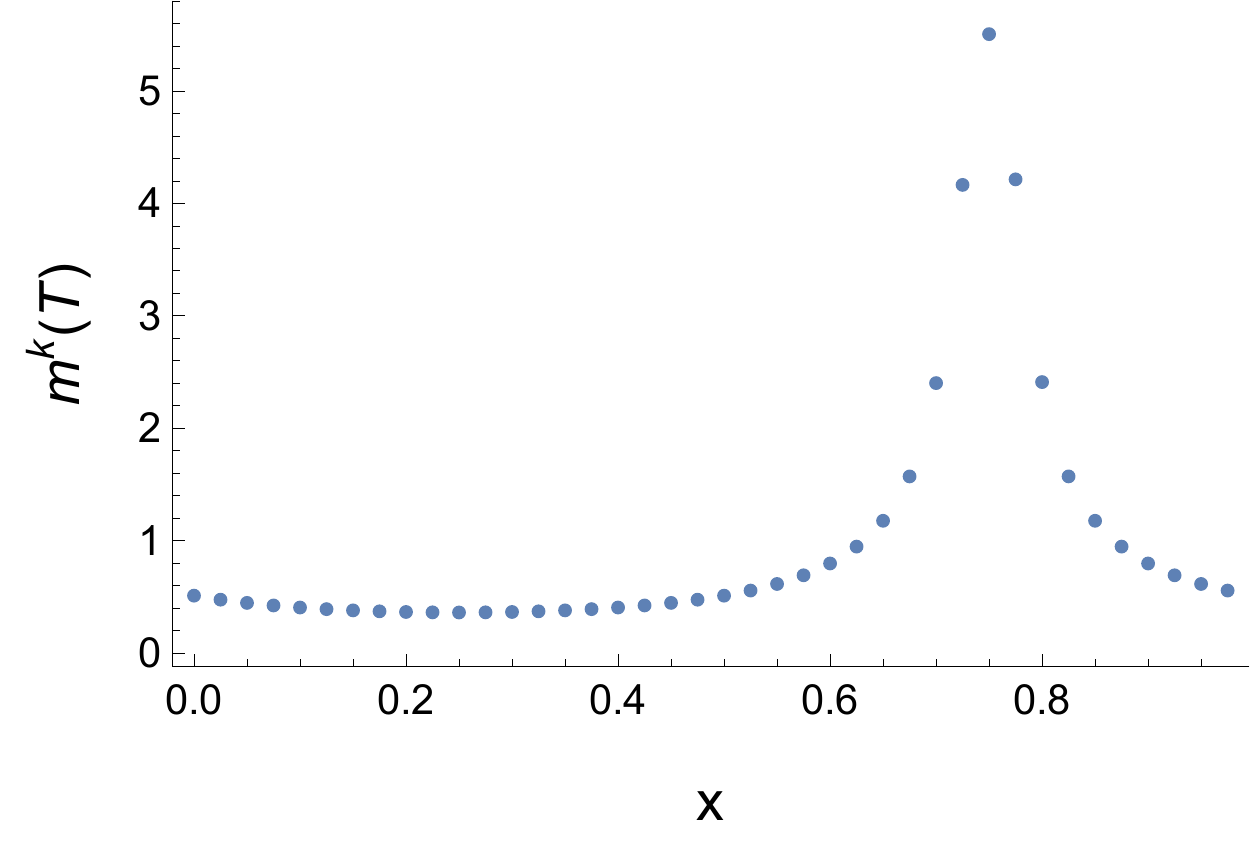}
		\caption{$\boldsymbol{m}^k(T)$ for $T=40$ and $k=10^4$.}
	\end{subfigure}
	\begin{subfigure}[b]{0.3\textwidth}
		\includegraphics[width=\textwidth]{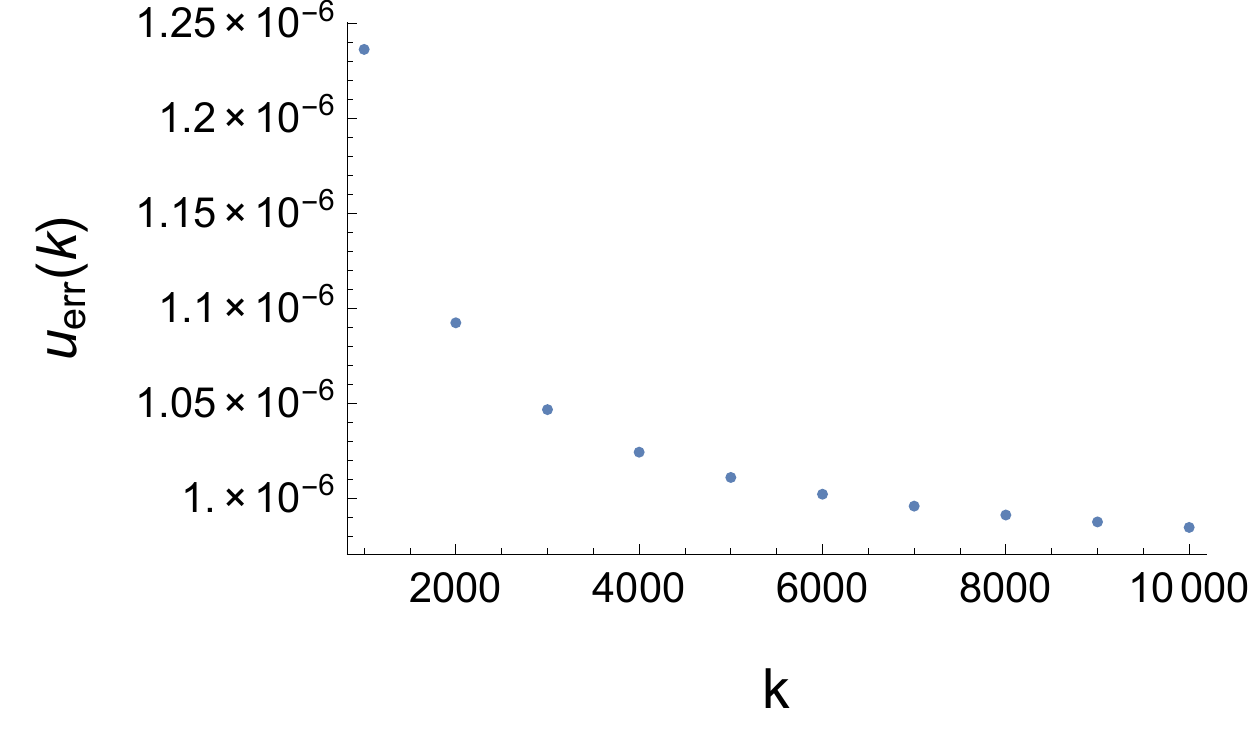}
		\caption{$u_{err}(k)$}
	\end{subfigure}
	\begin{subfigure}[b]{0.3\textwidth}
		\includegraphics[width=\textwidth]{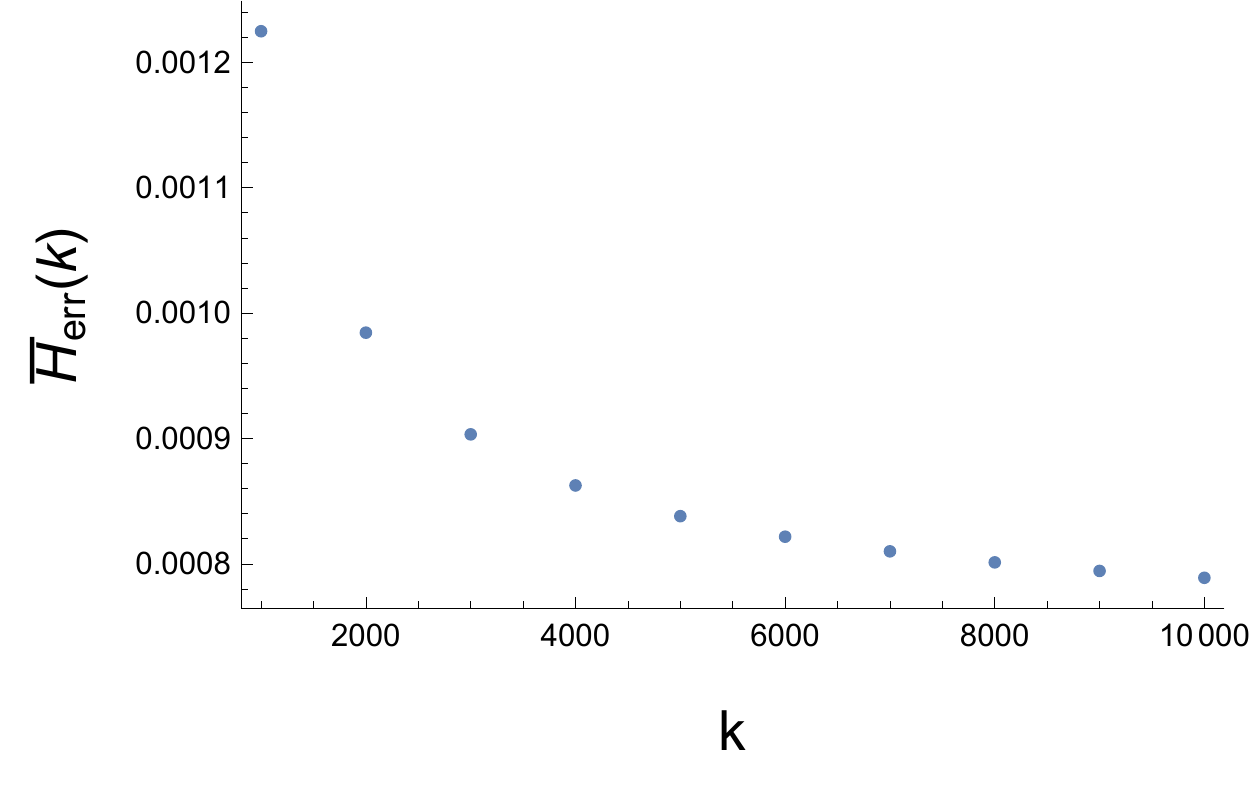}
		\caption{$\overline{H}_{err}(k)$}
	\end{subfigure}
	\caption{The convergence of solutions, approximated by \eqref{ApproxHessianMonotoneFlow}, of \eqref{ApproxiMFG} as $k$ increases when $d=1$ and $P=\frac{4}{\pi}$.}
	\label{HessianflowKConv}
\end{figure}

\subsection{{Two-dimensional case}}
\begin{figure}
	\begin{subfigure}[b]{0.3\textwidth}
		\includegraphics[width=\textwidth]{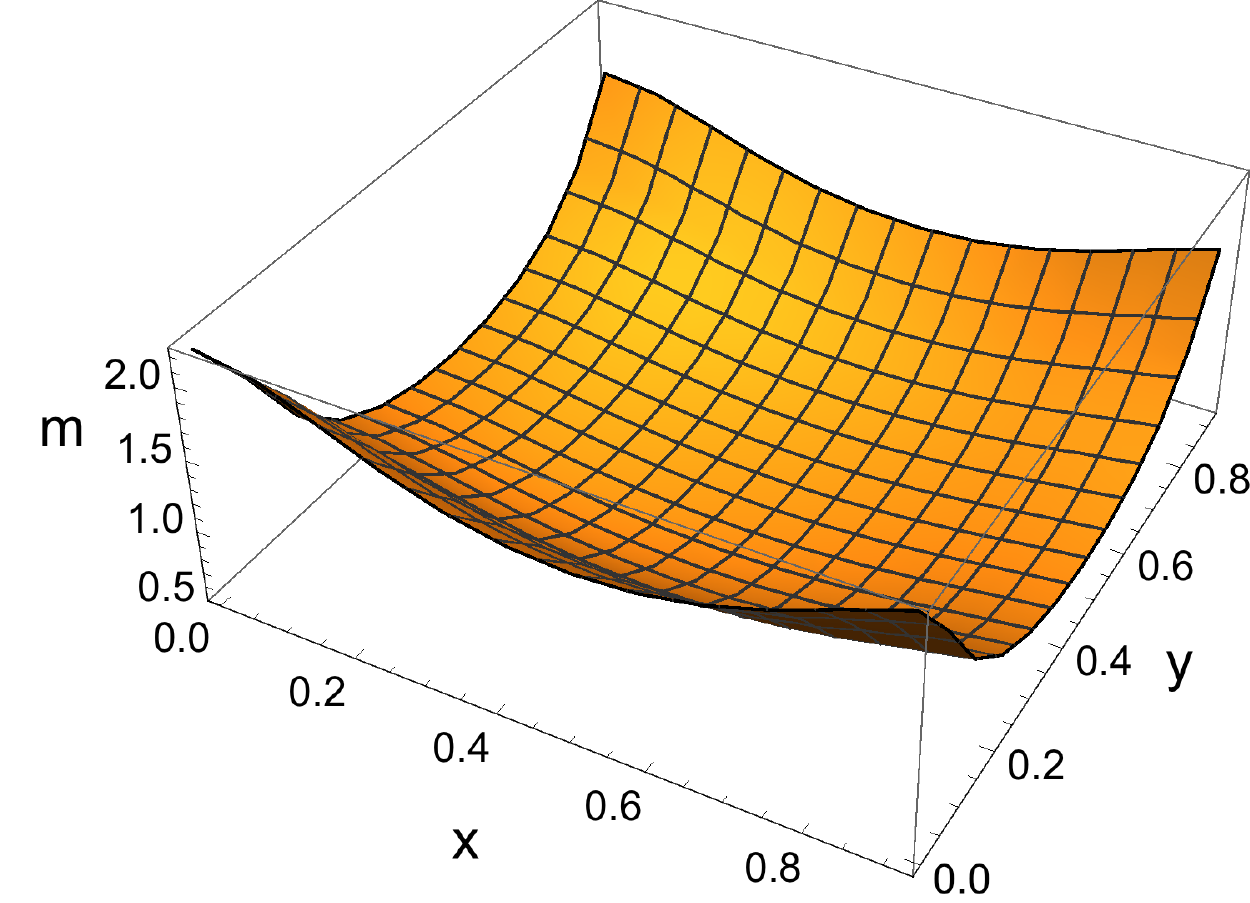}
		\caption{$m$ (HRF)}
	\end{subfigure}
	\begin{subfigure}[b]{0.3\textwidth}
		\includegraphics[width=\textwidth]{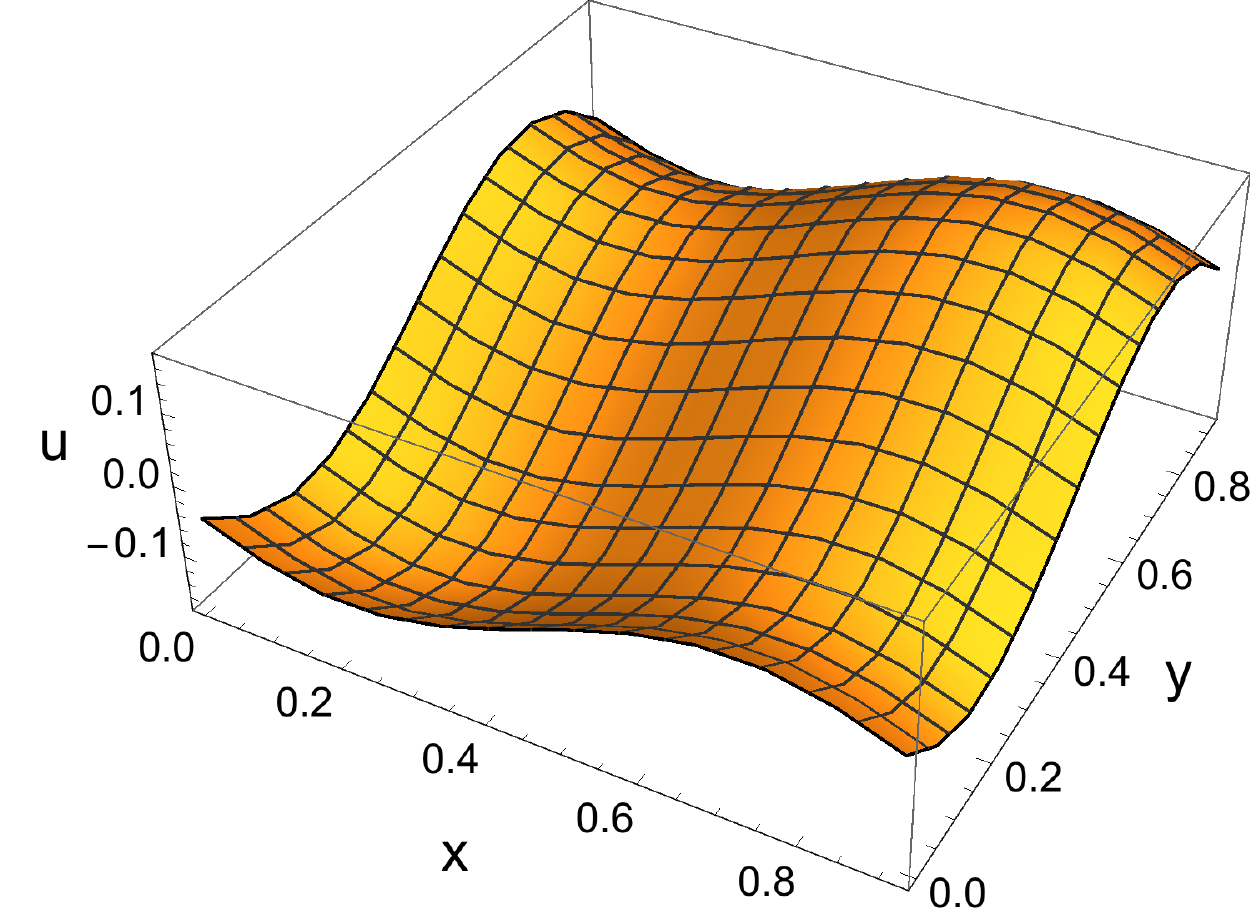}
		\caption{$u$ (HRF)}
	\end{subfigure}
\\
	\begin{subfigure}[b]{0.3\textwidth}
		\includegraphics[width=\textwidth]{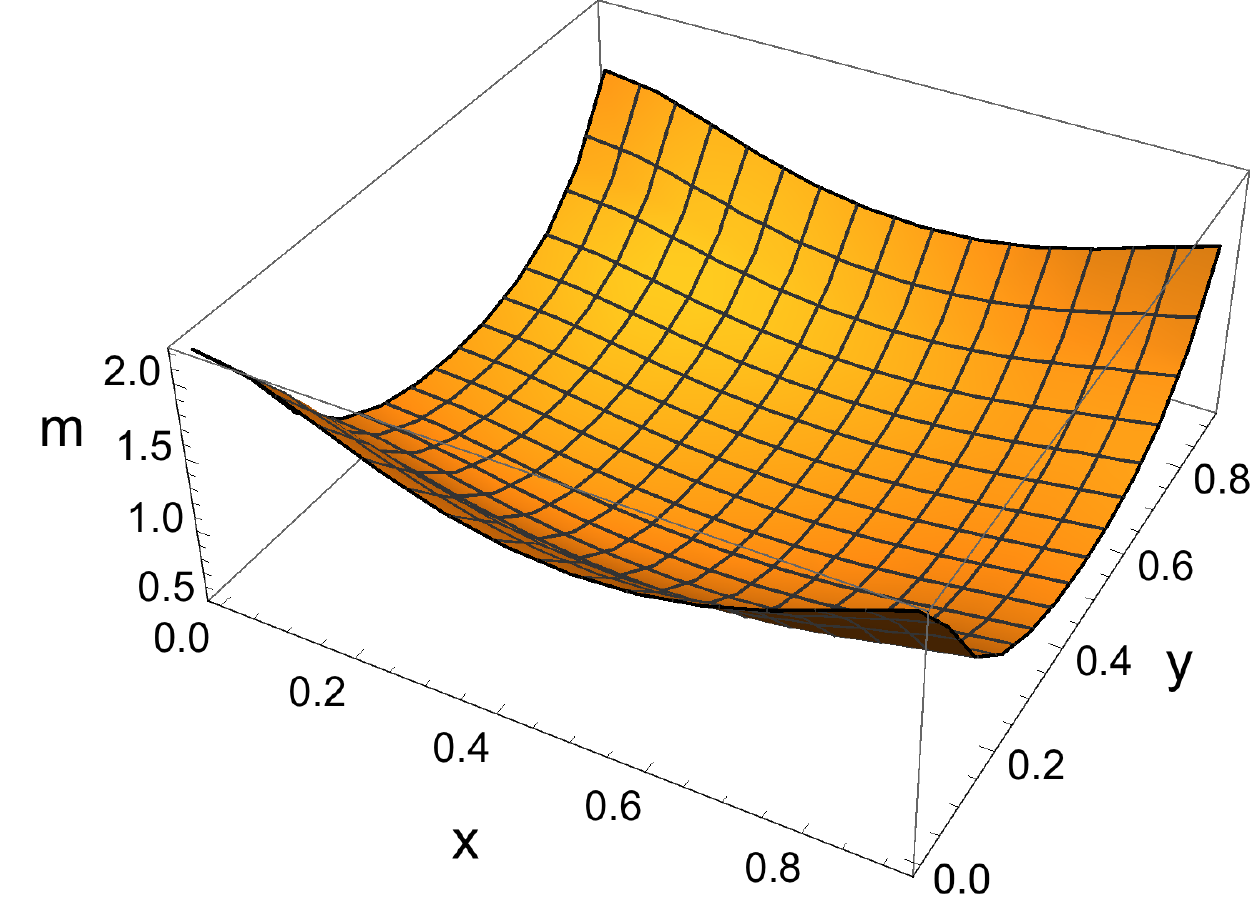}
		\caption{$m$ (NM)}
	\end{subfigure}
	\begin{subfigure}[b]{0.3\textwidth}
		\includegraphics[width=\textwidth]{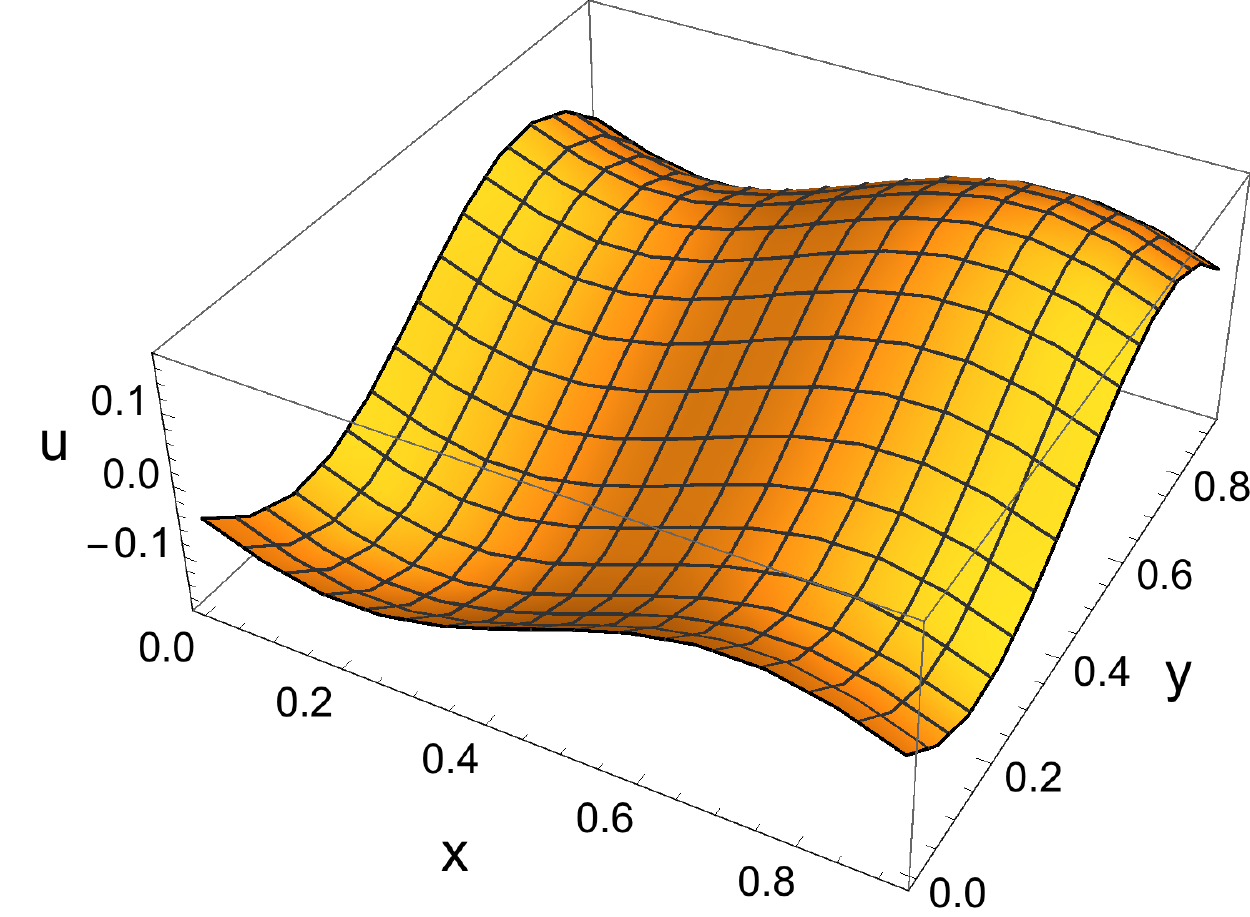}
		\caption{$u$ (NM)}
	\end{subfigure}
	\caption{Numerical solutions of
		Hessian Riemannian flow (HRF),
		\eqref{ApproxHessianMonotoneFlow}, and Newton's method (NM), \eqref{NewtonExplicitDiscretization}, at $t=14$ when $d=2$, $P=(1.5,2.5)$, and $H(x,p)=\frac{\left|p_1\right|^2}{2}+\frac{\left|p_2\right|^2}{2}+\cos\left(2\pi x_1\right)+\cos\left(2\pi x_2\right)$.}
	\label{TwoDimM}
\end{figure}
{
In higher dimensions, neither \cite{evans2003some} nor \cite{GIMY} give the convergence of measures in \eqref{ApproxiMFG} as $k\rightarrow+\infty$. Here, we study numerically the convergence in two dimensions. 
}
Let $d=2$,  $p=(p_1,p_1)\in \mathbb{R}^2$, and $x=(x_1, x_2)\in \mathbb{T}^2$. We consider the two-dimensional Hamiltonian discussed in \cite{gomes2004computing}: 
\begin{align}
\label{2DHmtn}
H(x,p)=\frac{\left|p_1\right|^2}{2}+\frac{\left|p_2\right|^2}{2}+\cos\left(2\pi x_1\right)+\cos\left(2\pi x_2\right).
\end{align}
{Let $P=(P_1,P_2)\in\mathbb{R}^2$. 
In this case, as pointed out in \cite{gomes2004computing}, 
\begin{equation*}
\overline{{H}}(P_1,P_2)=\overline{H}(P_1)+\overline{H}(P_2),
\end{equation*}
where $\overline{H}(P_1)$ and $\overline{H}(P_2)$ are one-dimensional effective Hamiltonians related to  
\begin{equation*}
{H}(x,p)=\frac{p^2}{2}+\cos(2\pi x). 
\end{equation*}
According to the discussion in the one-dimensional case, the critical values of $P$ satisfy $|P_1|=\frac{4}{\pi}$ and $|P_2|=\frac{4}{\pi}$. For $|P_1|,|P_2|>\frac{4}{\pi}$,} we choose $P=(1.5,2.5)$ for which $\overline{H}$=$4.4099660$ according to \cite{gomes2004computing}. For Newton's method, we set $\tau=2$ and $\kappa=1$. Fixing $N=144$, Table     \ref{TwoDimensions} shows computed $\overline{\boldsymbol{H}}(P)$ at $t=14$ for different values of $k$. We see that when $k=10^4$, we get a very accurate approximation for $\overline{H}$. Figure \ref{TwoDimM} plots $\boldsymbol{m}$ and $\boldsymbol{u}$ at $t=14$ when $k=10^4$.

{Similar to the corresponding one-dimensional case, one may wonder whether the convergence holds when $|P_1|\leq \frac{4}{\pi}$ and $|P_2|\leq \frac{4}{\pi}$.  Thus, we choose $P_1=1.273<\frac{4}{\pi}$ and $P_2=-1.2>-\frac{4}{\pi}$. {Here, we do not have an explicit solution to \eqref{HJFPTogether}.  To study the convergence, we first set $k'\in \mathbb{N}$ very large and compute $(\boldsymbol{m}^{k'}(T),\boldsymbol{u}^{k'}(T),\boldsymbol{\overline{H}}^{k'}(T))$. Then, we compute and plot in Figure  \ref{HessianflowKConv2D} the errors,
		\begin{align}
		\label{uerrordef}
		u_{error}(k)=\int_0^1\left|\boldsymbol{u}^k(T)-\boldsymbol{u}^{k'}(T)\right|^2dx,\\
		\label{merrordef}
		m_{error}(k)=\int_0^1\left|\boldsymbol{m}^k(T)-\boldsymbol{m}^{k'}(T)\right|dx,
		\end{align}
		and
		\begin{align}
		\label{Herrordef}
		\overline{H}_{error}(k)=\left|\boldsymbol{\overline{H}}^k(T)-\boldsymbol{\overline{H}}^{k'}(T)\right|,
		\end{align}
		as $k$ increases.} We see from Figure \ref{HessianflowKConv2D} that the approximated solutions appear to converge as $k$ increases.  
}

\begin{figure}
	\begin{subfigure}[b]{0.3\textwidth}
		\includegraphics[width=\textwidth]{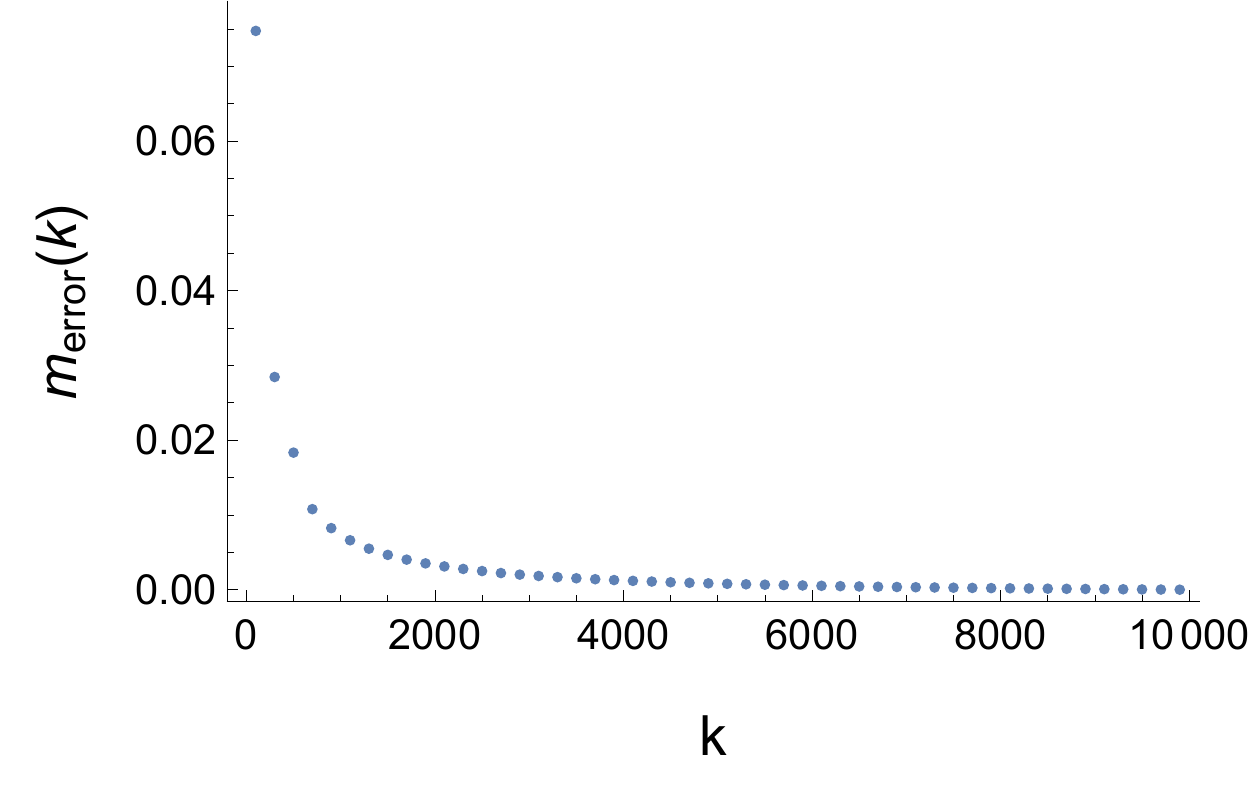}
		\caption{$m_{error}(k)$}
	\end{subfigure}
	\begin{subfigure}[b]{0.3\textwidth}
		\includegraphics[width=\textwidth]{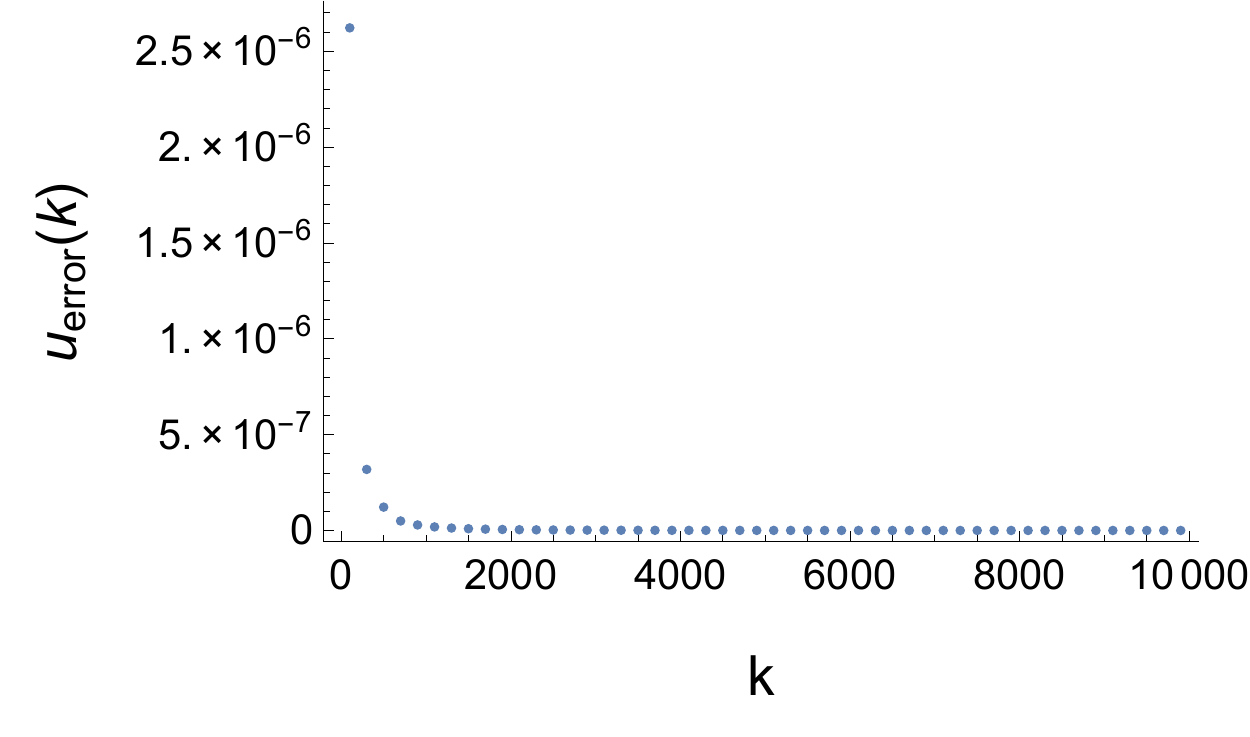}
		\caption{$u_{error}(k)$}
	\end{subfigure}
	\begin{subfigure}[b]{0.3\textwidth}
		\includegraphics[width=\textwidth]{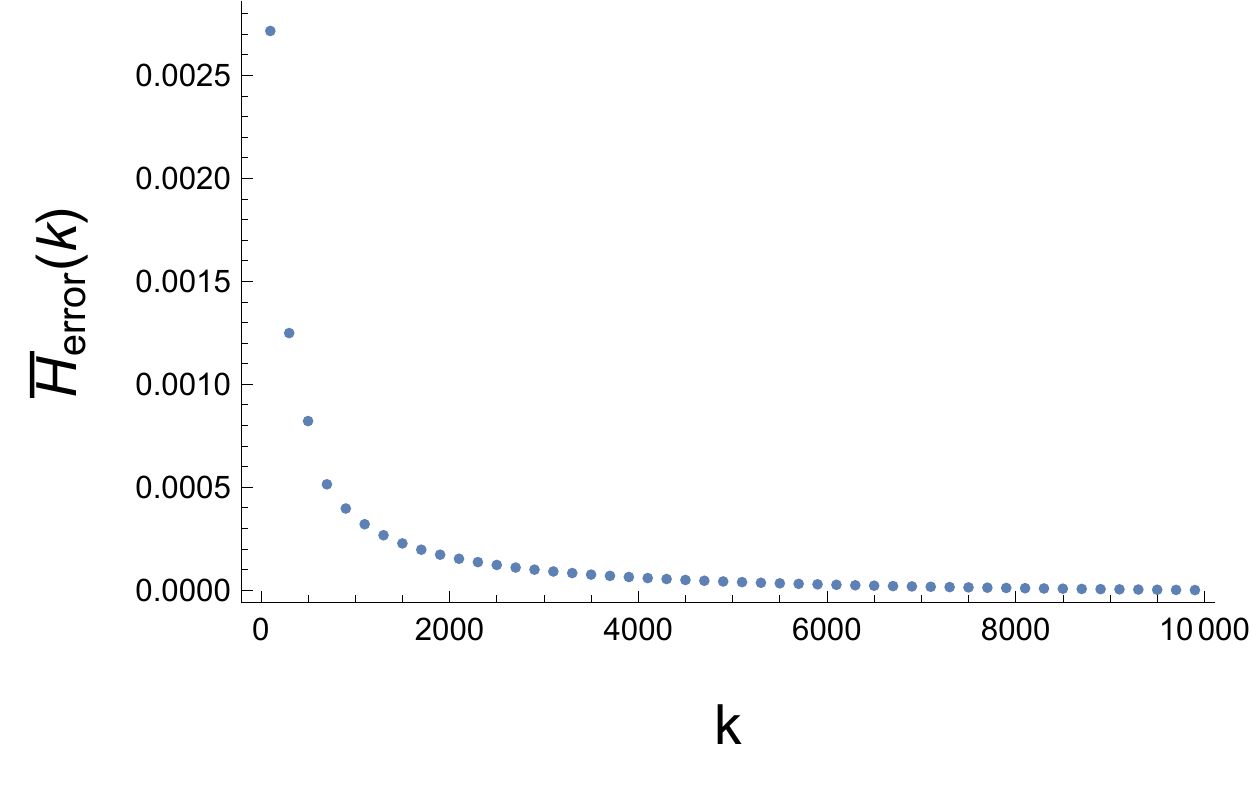}
		\caption{$\overline{H}_{error}(k)$}
	\end{subfigure}
	\caption{The convergence of solutions, approximated by \eqref{ApproxHessianMonotoneFlow}, of \eqref{ApproxiMFG} as $k$ increases when $d=2$, $P=(1.273,-1.2)$, and $H(x,p)=\frac{\left|p_1\right|^2}{2}+\frac{\left|p_2\right|^2}{2}+\cos\left(2\pi x_1\right)+\cos\left(2\pi x_2\right)$.}
	\label{HessianflowKConv2D}
\end{figure} 

\begin{table}[h!]
	\begin{tabular}{||c c c c c||} 
		\hline
		k & 10 & $10^2$ & $10^3$ & $10^4$\\ 
		\hline
		$\overline{\boldsymbol{H}}(P)$ (HRF) & 4.40251 & 4.40916 & 4.40989 & 4.40996 \\ 
		\hline
		$\overline{\boldsymbol{H}}(P)$ (NM) & 4.40935 & 4.40994 & 4.40996 & 4.40996 \\ 
		\hline
	\end{tabular}
	\caption{$\overline{\boldsymbol{H}}(P)$ for different values of $k$ when $d=2$, $P=(1.5,2.5)$, and $H(x,p)=\frac{\left|p_1\right|^2}{2}+\frac{\left|p_2\right|^2}{2}+\cos\left(2\pi x_1\right)+\cos\left(2\pi x_2\right)$.}
	\label{TwoDimensions}
\end{table}
{
Next, we study a non-separable Hamiltonian. For $x=(x_1,x_2)\in \mathbb{T}^2$ and $p=(p_1,p_2)\in \mathbb{R}^2$,  We consider
\begin{align*}
H(x,p)=\frac{\left|p_1\right|^2}{2}+\frac{\left|p_2\right|^2}{2}+\sin(2\pi x_1)\sin(2\pi x_2).
\end{align*}
In this case, we do not know the explicit solution of $\overline{H}$. In the numerical experiments, we fix $N=144$. Let $P=(P_1,P_2)\in \mathbb{R}^2$. We plot $\overline{\boldsymbol{H}}(P)$ at $t=14$ for $k=10^4$ and $P\in [-2,2]\times[-2,2]$ in Figure \ref{NonSepHRFHBar2D}.   For $P=(1.5,2.5)$, we plot in Figure \ref{NonSepTwoDimM} the approximated results for $\boldsymbol{m}$ and $\boldsymbol{u}$ at $t=14$ when $k=10^4$. Meanwhile, in Figure \ref{NonSepHessianflowKConv2D}, we plot the errors defined in  \eqref{uerrordef}, \eqref{merrordef}, and \eqref{Herrordef}.  
\begin{figure}
	\begin{subfigure}[b]{0.3\textwidth}
		\includegraphics[width=\textwidth]{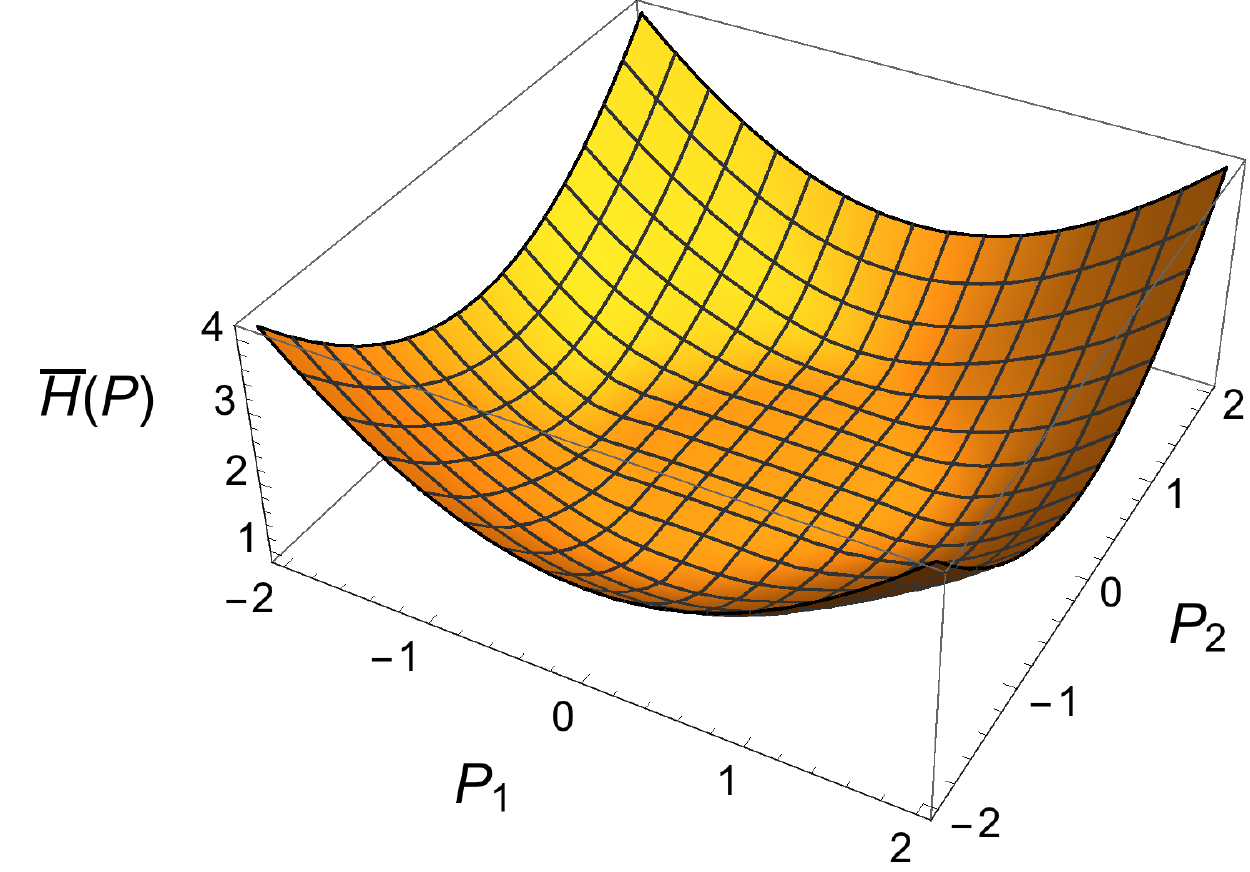}
	\end{subfigure}
	\caption{$\overline{\boldsymbol{H}}(P)$ computed by \eqref{ApproxHessianMonotoneFlow} when $d=2$ and $H(x,p)=\frac{\left|p_1\right|^2}{2}+\frac{\left|p_2\right|^2}{2}+\sin(2\pi x_1)\sin(2\pi x_2)$.} 
	\label{NonSepHRFHBar2D}
\end{figure}
\begin{figure}
	\begin{subfigure}[b]{0.3\textwidth}
		\includegraphics[width=\textwidth]{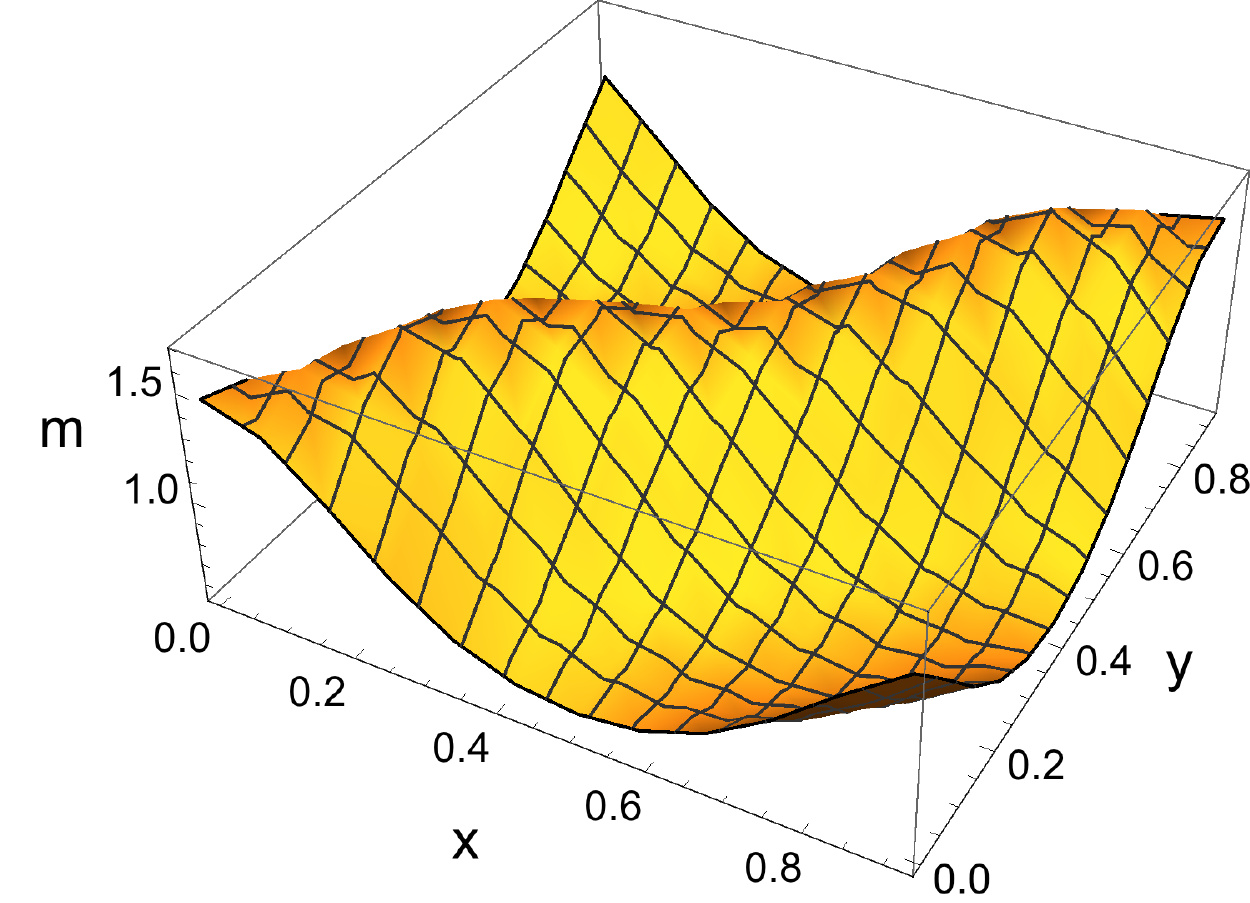}
		\caption{$m$ (HRF)}
	\end{subfigure}
	\begin{subfigure}[b]{0.3\textwidth}
		\includegraphics[width=\textwidth]{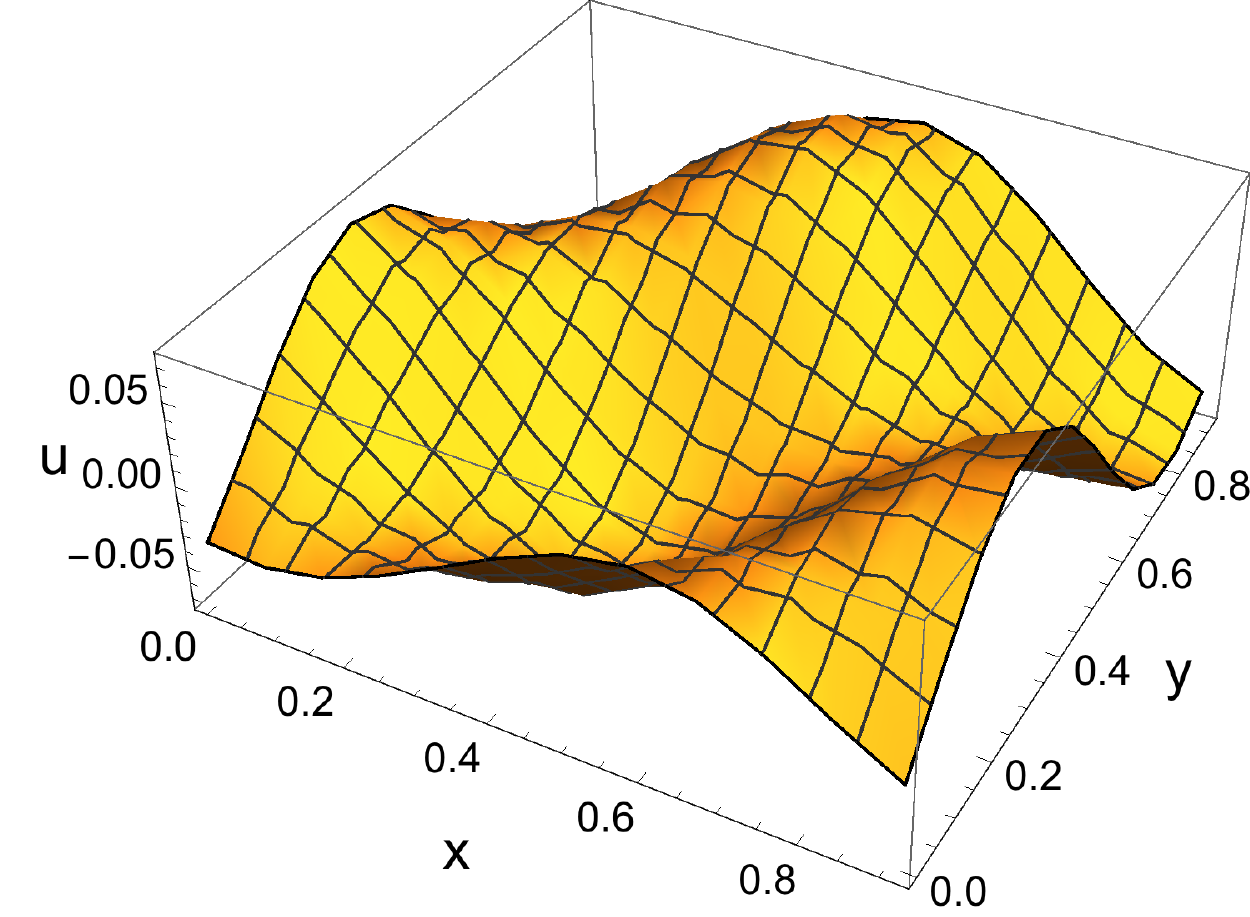}
		\caption{$u$ (HRF)}
	\end{subfigure}
	\\
	\begin{subfigure}[b]{0.3\textwidth}
		\includegraphics[width=\textwidth]{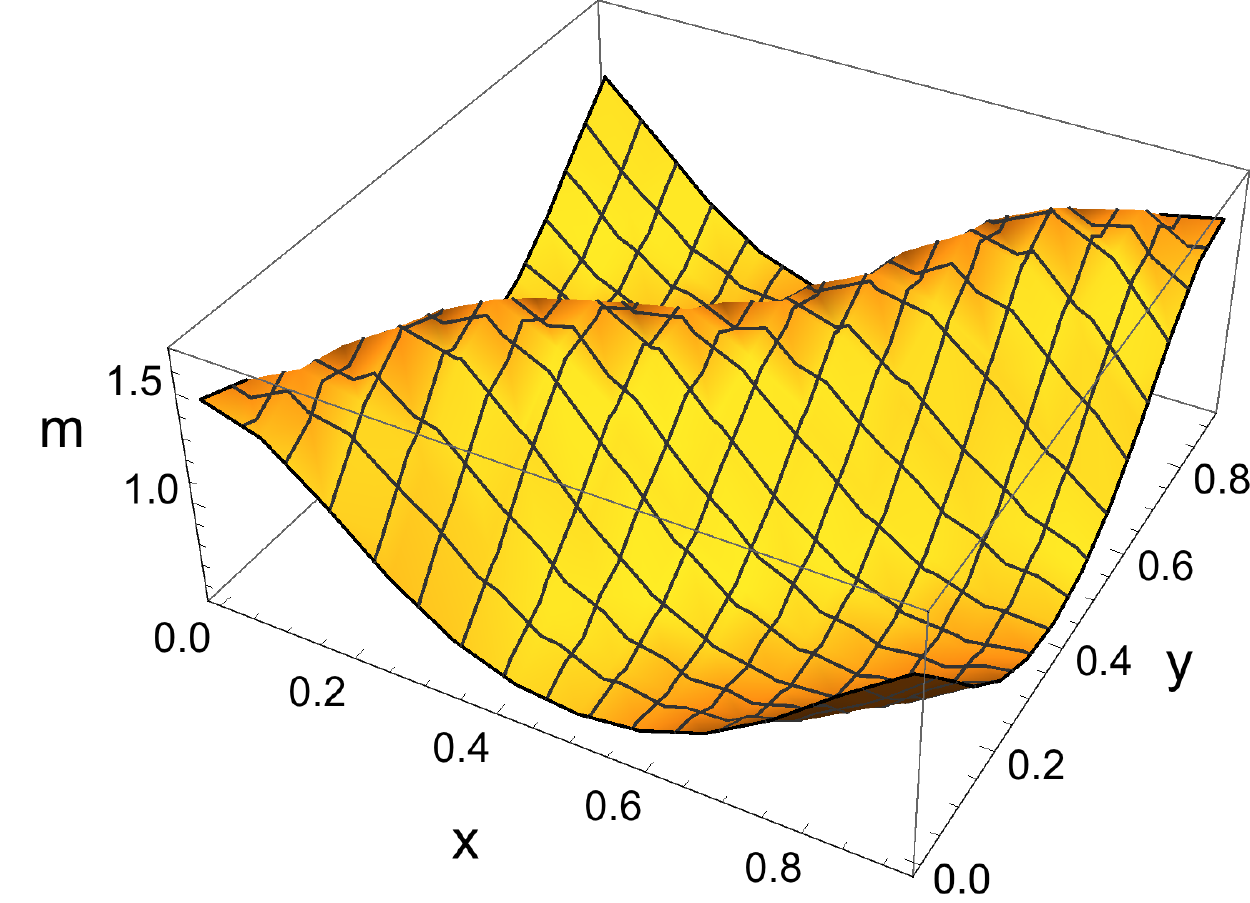}
		\caption{$m$ (NM)}
	\end{subfigure}
	\begin{subfigure}[b]{0.3\textwidth}
		\includegraphics[width=\textwidth]{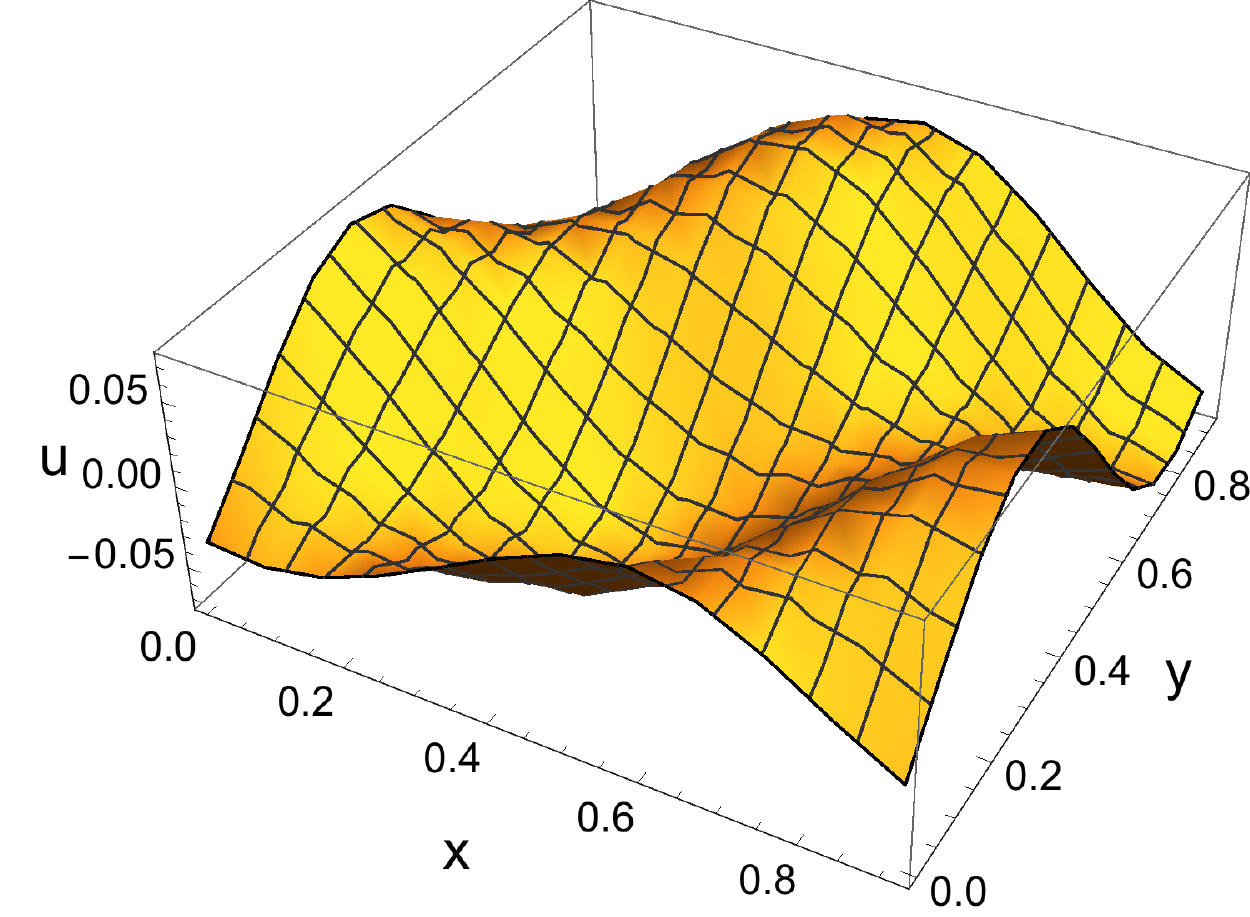}
		\caption{$u$ (NM)}
	\end{subfigure}
	\caption{Numerical solutions of
		Hessian Riemannian flow (HRF),
		\eqref{ApproxHessianMonotoneFlow}, and Newton's method (NM), \eqref{NewtonExplicitDiscretization}, at $t=14$ when $d=2$, $P=(1.5, 2.5)$, and $H(x,p)=\frac{\left|p_1\right|^2}{2}+\frac{\left|p_2\right|^2}{2}+\sin(2\pi x_1)\sin(2\pi x_2)$.}
	\label{NonSepTwoDimM}
\end{figure}
\begin{figure}
	\begin{subfigure}[b]{0.3\textwidth}
		\includegraphics[width=\textwidth]{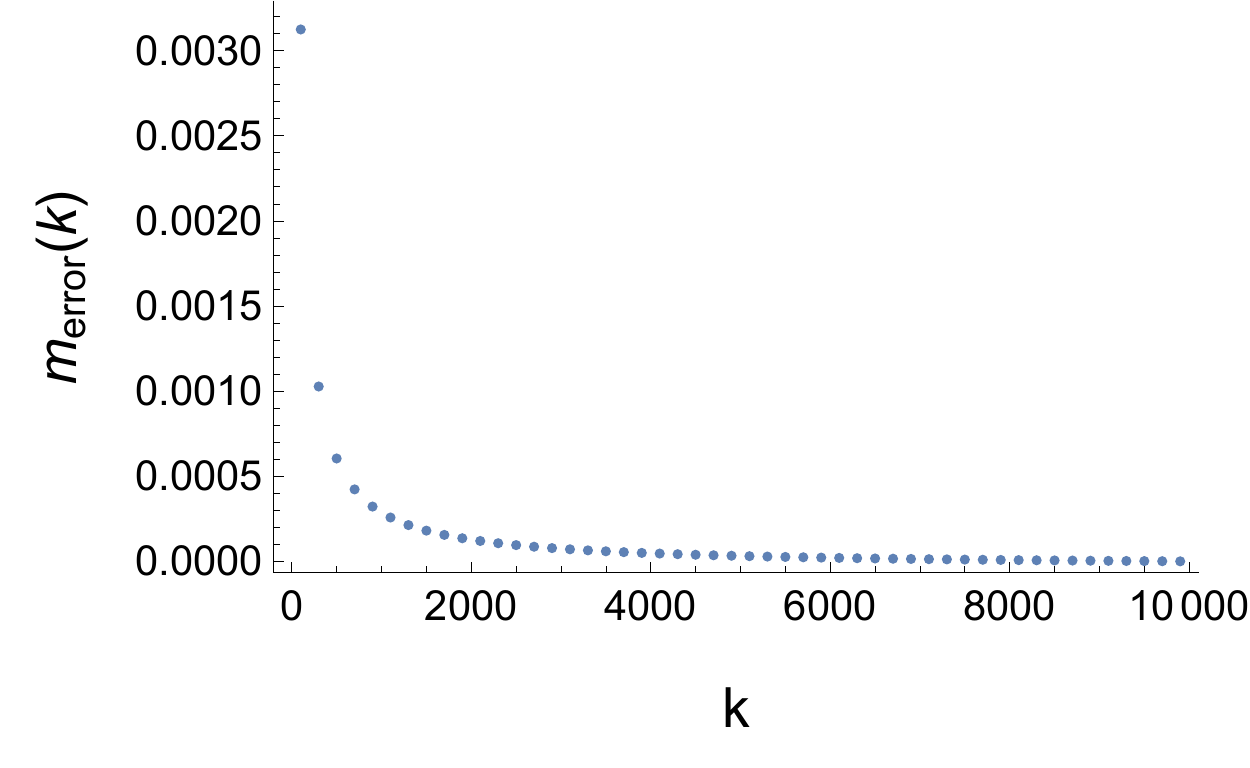}
		\caption{$m_{error}(k)$}
	\end{subfigure}
	\begin{subfigure}[b]{0.3\textwidth}
		\includegraphics[width=\textwidth]{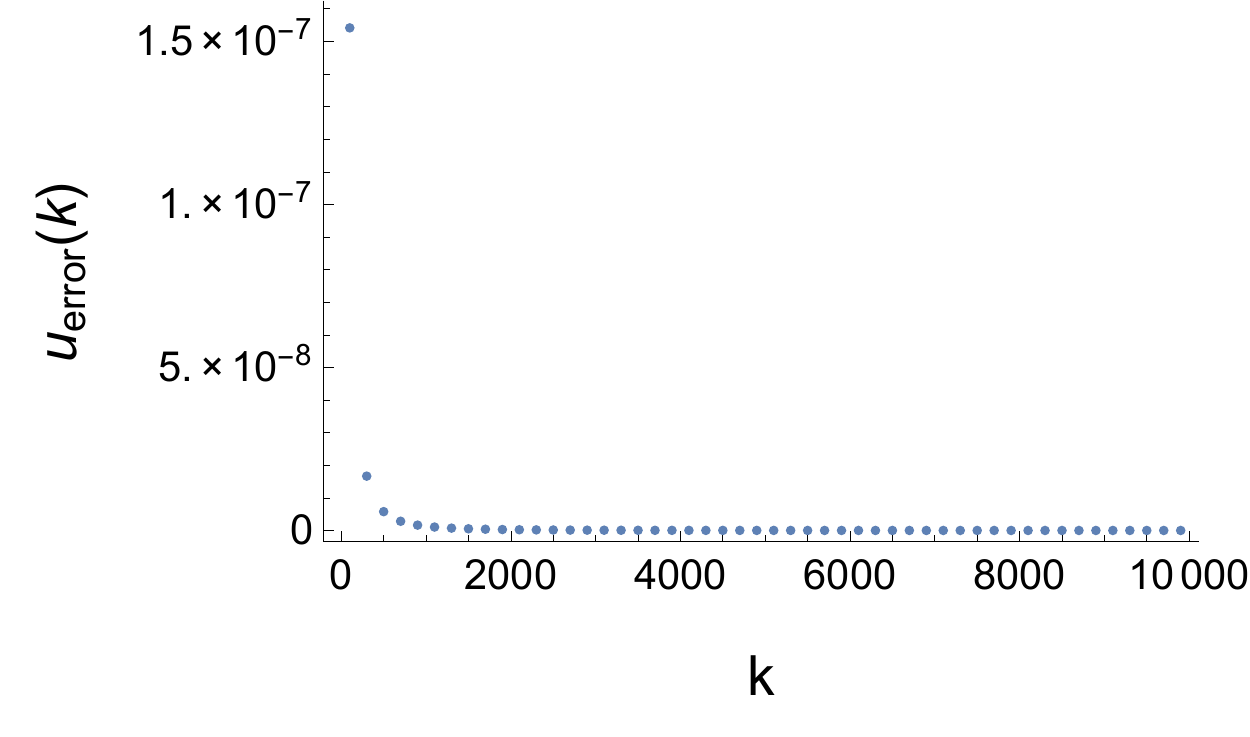}
		\caption{$u_{error}(k)$}
	\end{subfigure}
	\begin{subfigure}[b]{0.3\textwidth}
		\includegraphics[width=\textwidth]{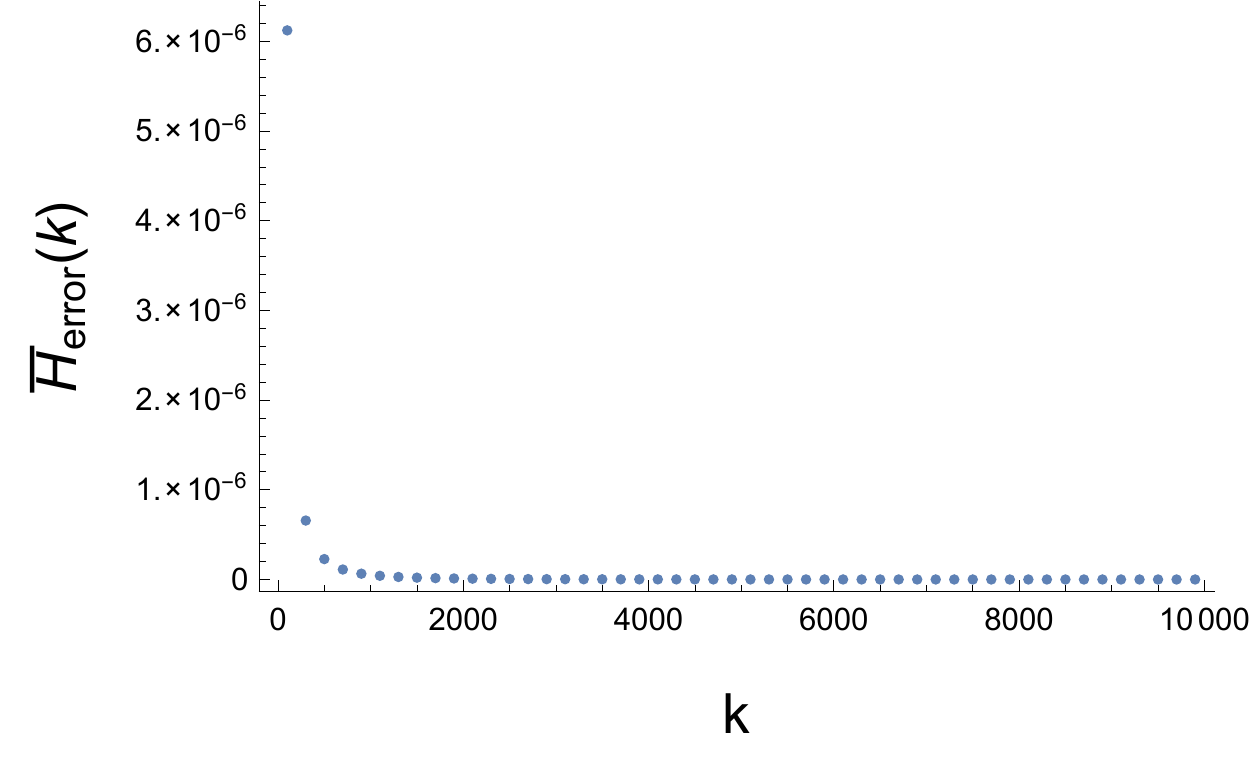}
		\caption{$\overline{H}_{error}(k)$}
	\end{subfigure}
	\caption{The convergence of solutions, approximated by \eqref{ApproxHessianMonotoneFlow}, of \eqref{ApproxiMFG} as $k$ increases when $d=2$, $P=(1.5, 2.5)$, and $H(x,p)=\frac{\left|p_1\right|^2}{2}+\frac{\left|p_2\right|^2}{2}+\sin(2\pi x_1)\sin(2\pi x_2)$.}
	\label{NonSepHessianflowKConv2D}
\end{figure} 
}

\subsection{Non-monotonicity of $\overline{F}$}
\label{NonMonotonicityOfHessianFlow}
Lemma \ref{Monotonicity} implies that $\widetilde{F}$ is monotone. However, the operator, $\overline{F}$, in \eqref{overlineF} may not be monotone. 

To illustrate the non-monotonicity of $\overline{F}$, we choose $$H(x,p)=\frac{|p|^2}{2}-10\cos(2\pi x)-10\sin(2\pi x).$$ In the simulation, we set $P=0.5$, $k=100$, and $N=20$.  Here, we compute two trajectories generated by \eqref{DynDiscreteHessianFlow} from two sets of initial values, $(M^0,U^0)=(m_1^0,\dots,m_N^0,u^0_1,\dots,u_N^0)$ and $(\widetilde{M}^0,\widetilde{U}^0)=\left(\widetilde{m}_1^0,\dots,\widetilde{m}_N^0,\widetilde{u}^0_1,\dots,\widetilde{u}^0_N\right)$, where $u^0_i=\cos(2 \pi x_i)$, $\widetilde{u}^0_i=\sin(2\pi x_i), m^0_i=1 + 0.2\cos(2 \pi x_i)$, $\widetilde{m}^0_i=1+0.7\cos(2\pi x_i)$. We represent the solutions corresponding to $(M^0,U^0)$ and $(\widetilde{M}^0,\widetilde{U}^0)$ by $$(M(t),U(t))=(m_1(t),\dots,m_N(t), u_1(t),\dots,u_N(t))$$ and  $$(\widetilde{M}(t),\widetilde{U}(t))=(\widetilde{m}_1(t),\dots,\widetilde{m}_N(t),\widetilde{u}_1(t),\dots,\widetilde{u}_N(t)),$$ respectively.
If $\overline{F}$ were monotone, we would have
\begin{align*}
&\frac{d}{dt}\left(\sum_{i=1}^{N}\left(\left(m_i(t)-\widetilde{m}_i(t)\right)^2+\left(u_i(t)-\widetilde{u}_i(t)\right)^2\right)\right)\\
=&-\left\langle \overline{F}(M(t),U(t))-\overline{F}(\widetilde{M}(t),\widetilde{U}(t)),(M(t),U(t))-(\widetilde{M}(t),\widetilde{U}(t)) \right\rangle\leq 0.
\end{align*}
Hence, we plot the values of $\sum \limits_{i=1}^{N}\left(\left(m_i(t)-\widetilde{m}_i(t)\right)^2+\left(u_i(t)-\widetilde{u}_i(t)\right)^2\right)$ versus time in Figure \ref{nonmotonicity}, which shows that the curve is not strictly decreasing. Thus, $\overline{F}$ fails to be monotone. In contrast, we plot $\overline{\phi}$ defined in \eqref{PhiBar} (see Figure \ref{motonicity}), which shows that $\overline{\phi}(t)$ is decreasing, as expected.

\begin{figure}
	\begin{subfigure}[b]{0.3\textwidth}
		\includegraphics[width=\textwidth]{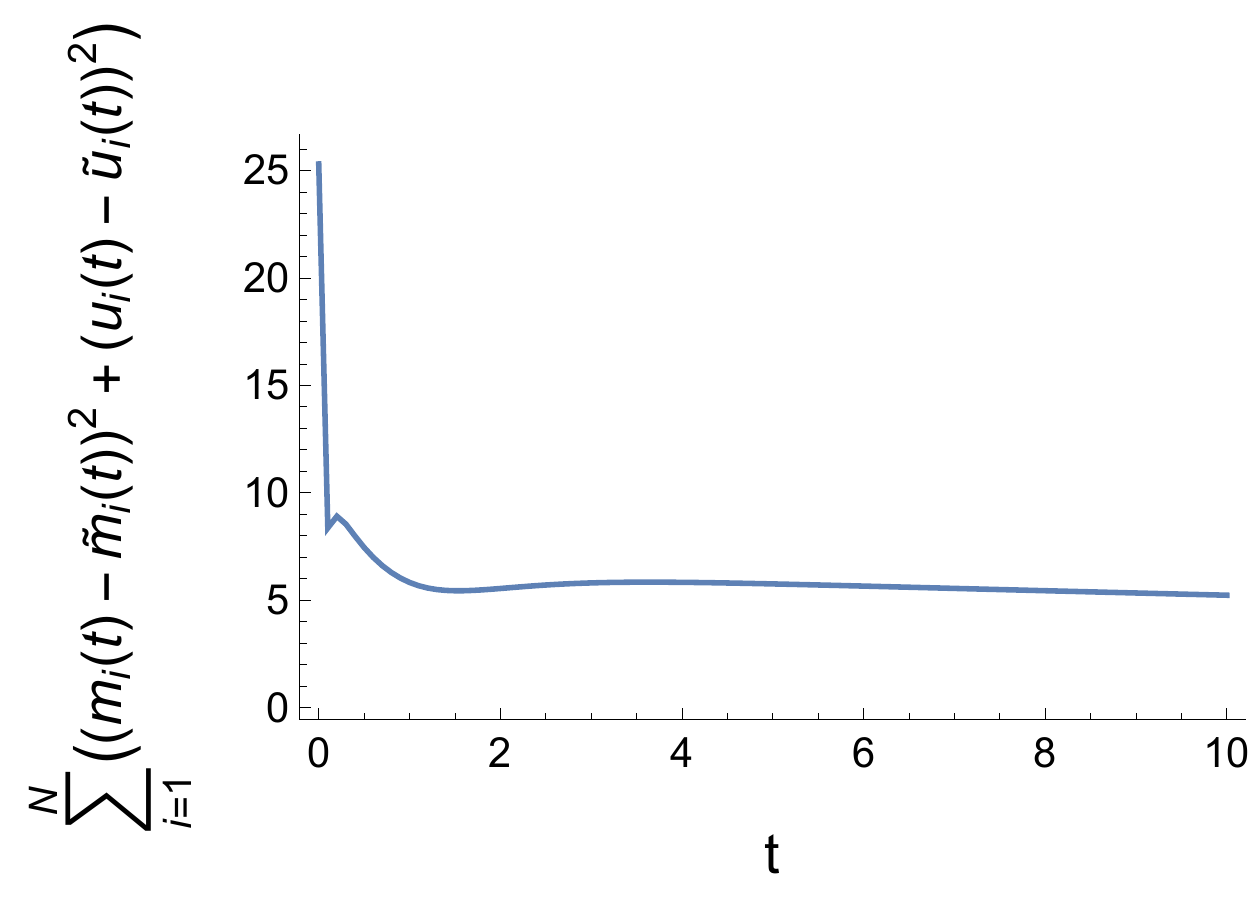}
		\caption{Non-monotonicity of $\overline{F}$.}
		\label{nonmotonicity}
	\end{subfigure}
	\begin{subfigure}[b]{0.3\textwidth}
		\includegraphics[width=\textwidth]{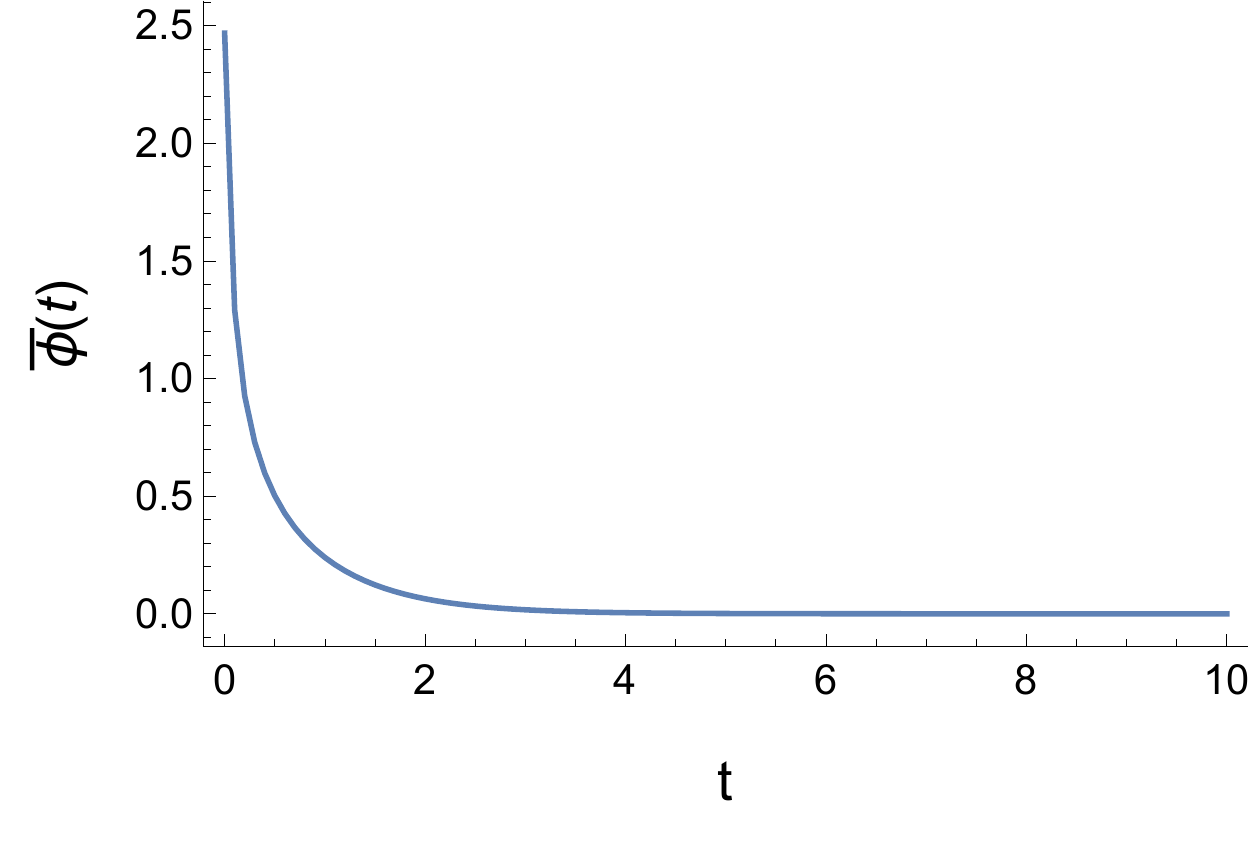}
		\caption{Monotonicity of $\overline{\phi}(t)$.}
		\label{motonicity}
	\end{subfigure}
	\caption{Monotonicity test}
	\label{MonotonicityTest}
\end{figure}

\subsection{Speed comparison between the Hessian Riemannian flow and Newton's  method}
Here, we compare the speed of the Hessian Riemannian flow in \eqref{DynDiscreteHessianFlow}, which is solved by NDSolve, with the speed of  Newton's method in \eqref{NewtonExplicitDiscretization}. 

We consider the Hamiltonian, $$H(x,p)=\frac{|p|^2}{2}-\sin(2\pi x).$$ 
In the numerical experiment, we set $P=0.5$ and $k=10^2$. In this case,  $\overline{H}(P)=1$. The initial point is given by  $(M^0,U^0)=(m_1^0,\dots,m_N^0,u_1^0,\dots,u_N^0)$, where $m_i^0=1+0.9\cos(2\pi x_i)$ and $u_i^0=0.2\cos(2\pi x_i)$. For Newton's method, we choose $\tau=\kappa=1$. For each value of $N$, we compute $\overline{\boldsymbol{H}}$ by { the Hessian Riemannian flow for a large time $T^\diamond$ and use it as a benchmark, named $\overline{\boldsymbol{H}}^\diamond$.} Then, we use the Hessian Riemannian flow and  Newton's method to compute $\overline{\boldsymbol{H}}(T)$ such that  $|\overline{\boldsymbol{H}}-\overline{\boldsymbol{H}}^\diamond|<\epsilon$. {To solve the Hessian Riemannian flow, we use the built-in function, NDSolve, in Mathematica and use two different methods, named LSODA and BDF, separately. Then, we record $T$, the interior number of iterations of NDSolve, and the corresponding CPU time (measured in seconds) in Table \ref{CPUtimeComparison}}. Here, we choose $T^\diamond=50$ and $\epsilon=0.001$. We see that  Newton's method is substantially faster than the Hessian Riemannian flow { solved by LSODA and BDF.} 
{ 
\begin{table}[h!]
	\begin{tabular}{||c c c c c||} 
		\hline
		N & 15 & 30 & 60 & 120\\ 
		\hline
		$\overline{\boldsymbol{H}}^\diamond$ & 0.964609 & 0.964754 & 0.96476 & 0.96476 \\ 
		\hline
		$T$ (HRF, BDF) & 44 & 44 & 44 & 44 \\
		\hline
		$T$ (HRF, LSODA) & 44 & 44 & 44 & 44 \\
		\hline
		$T$ (NM) & 44 & 43 & 43 & 43 \\
		\hline
		Num. of iterations (HRF, BDF) & 720 & 829 & 763 & 639 \\
		\hline
		Num. of iterations (HRF, LSODA) & 1473 & 1629 & 3366 & 1025 \\
		\hline
		Num. of iterations (NM) & 44 & 43  & 43  & 43 \\
		\hline
		CPU time (HRF, BDF) & 1.941897 & 11.285296  & 67.163746 & 2378.225083 \\
		\hline
		CPU time (HRF, LSODA) & 1.514499 & 8.101890 & 48.836245 & 1841.271556 \\
		\hline
		CPU time (NM) & 0.009373 & 0.040518 & 0.092378  & 0.364952 \\
		\hline
	\end{tabular}
	\caption{The Hessian Riemannian flow (HRF) vs.  Newton's method (NM).}
	\label{CPUtimeComparison}
\end{table}
}
\subsection{Stability of the Hessian Riemannian flow and  Newton's  method}

\begin{figure}[h]
	\begin{subfigure}[b]{0.3\textwidth}
		\includegraphics[width=\textwidth]{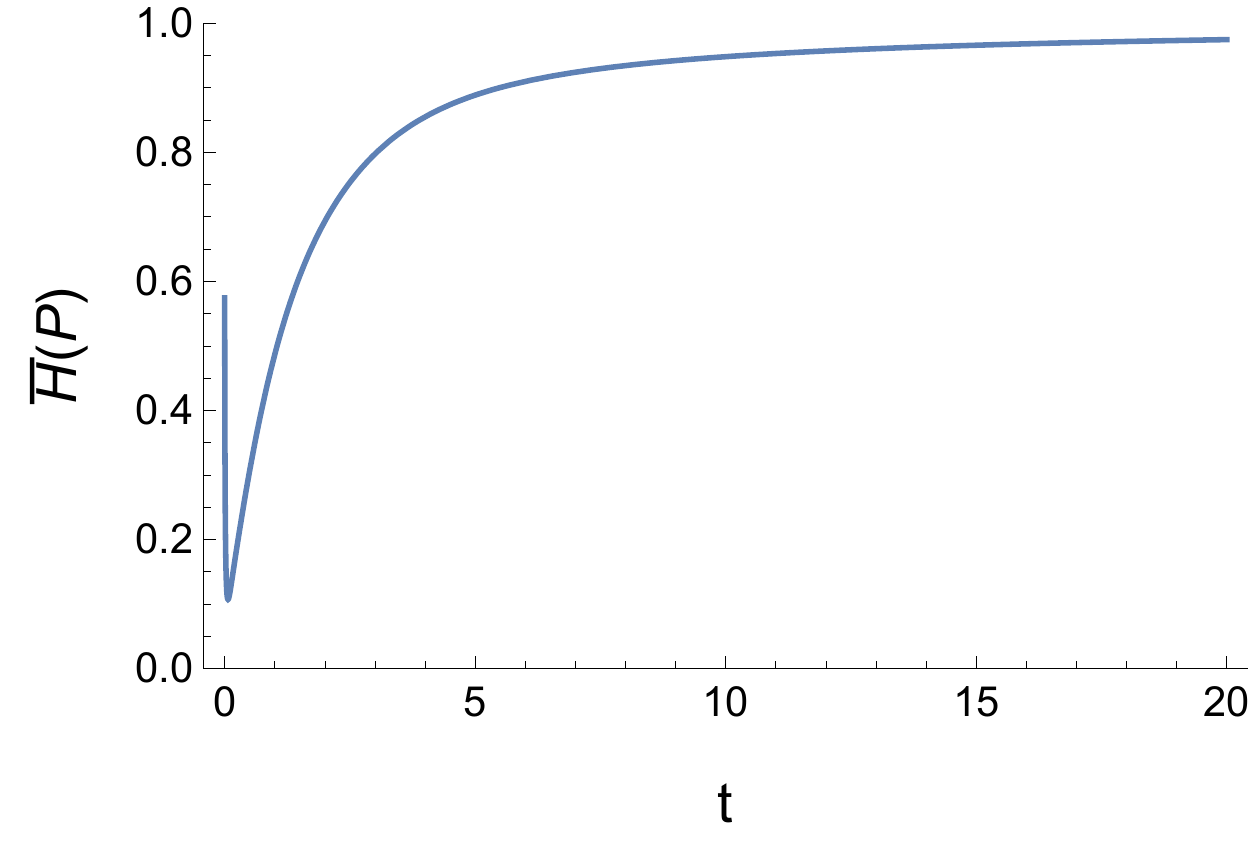}
		\caption{$\overline{\boldsymbol{H}}(P)$ (HRF)}
	\end{subfigure}
	\begin{subfigure}[b]{0.3\textwidth}
		\includegraphics[width=\textwidth]{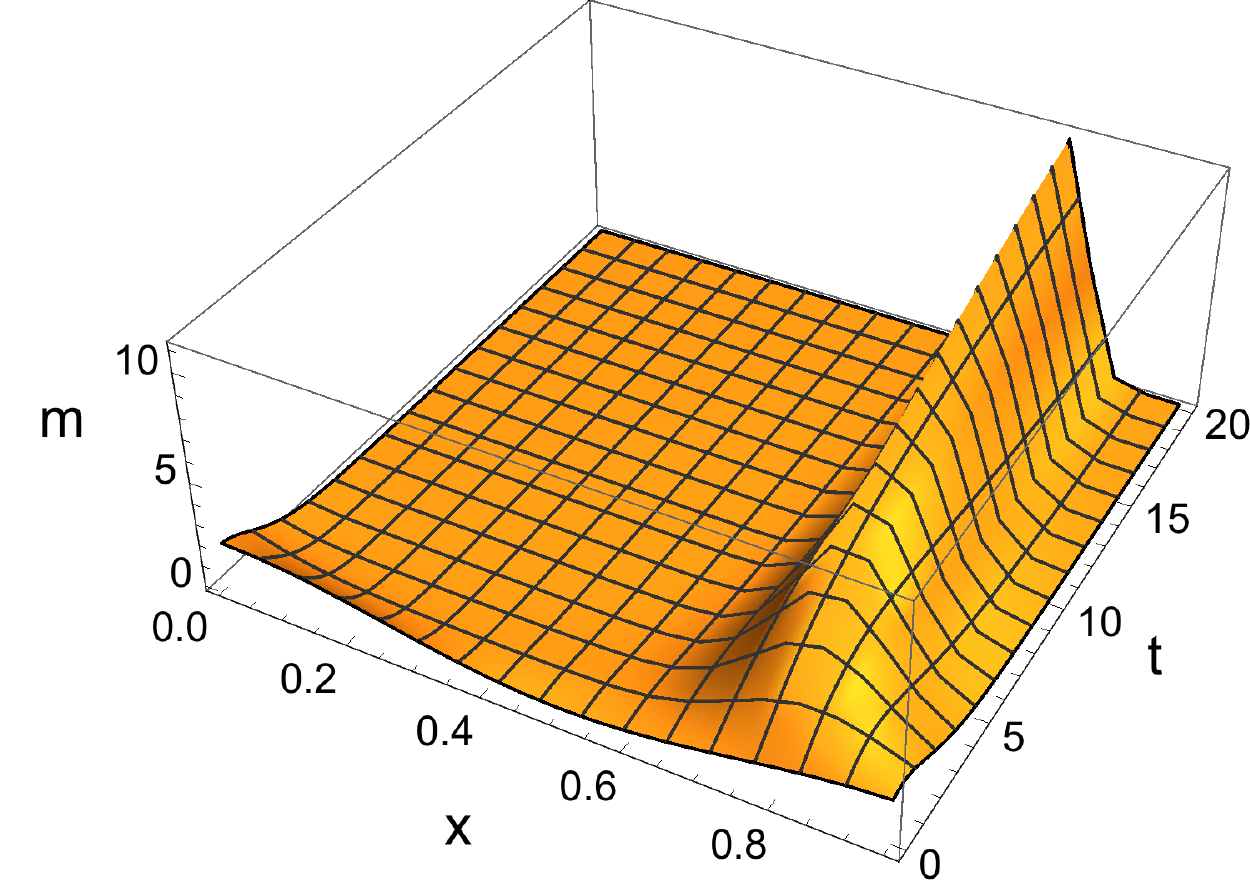}
		\caption{$\boldsymbol{m}$ (HRF)}
	\end{subfigure}
	\begin{subfigure}[b]{0.3\textwidth}
		\includegraphics[width=\textwidth]{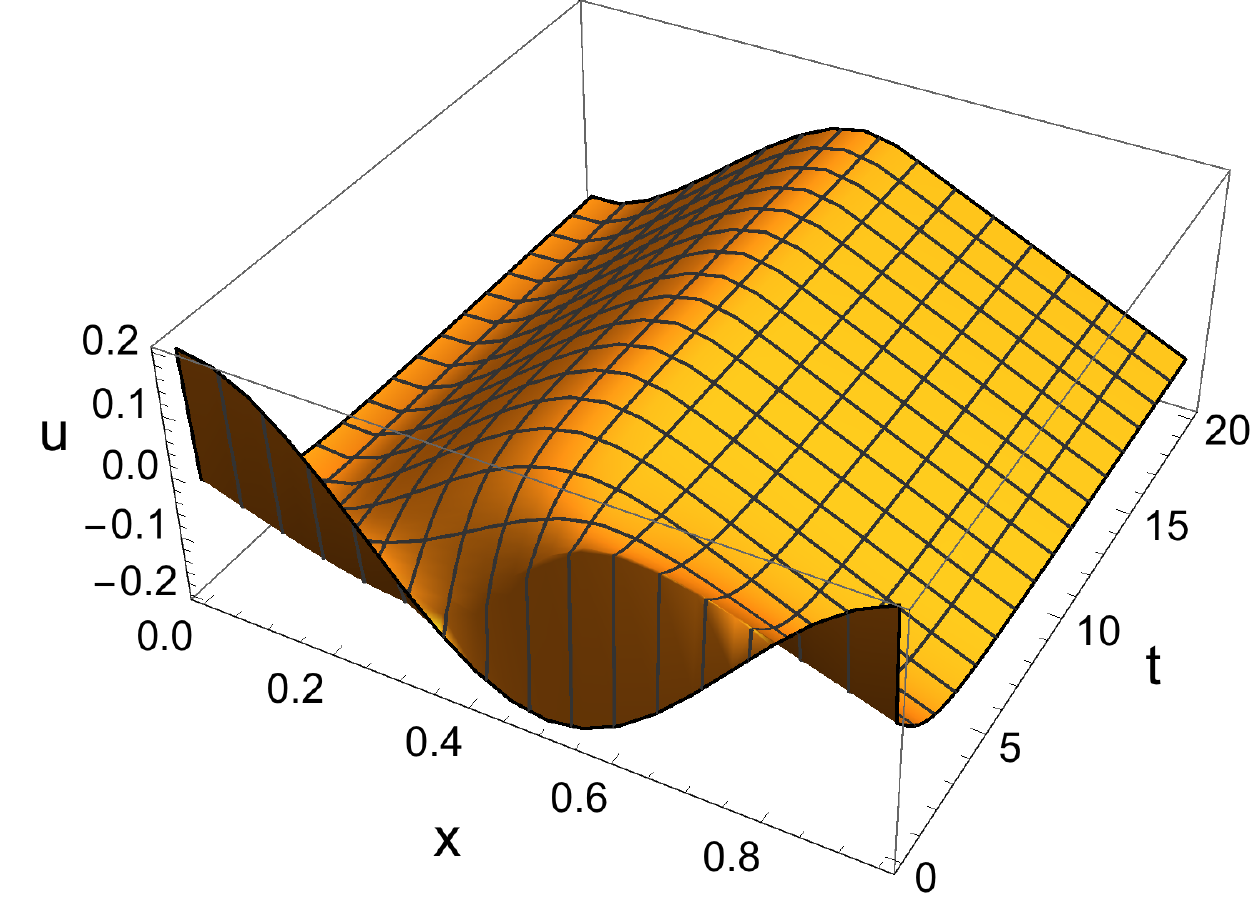}
		\caption{$\boldsymbol{u}$ (HRF)}
	\end{subfigure}
	\\
	\begin{subfigure}[b]{0.3\textwidth}
		\includegraphics[width=\textwidth]{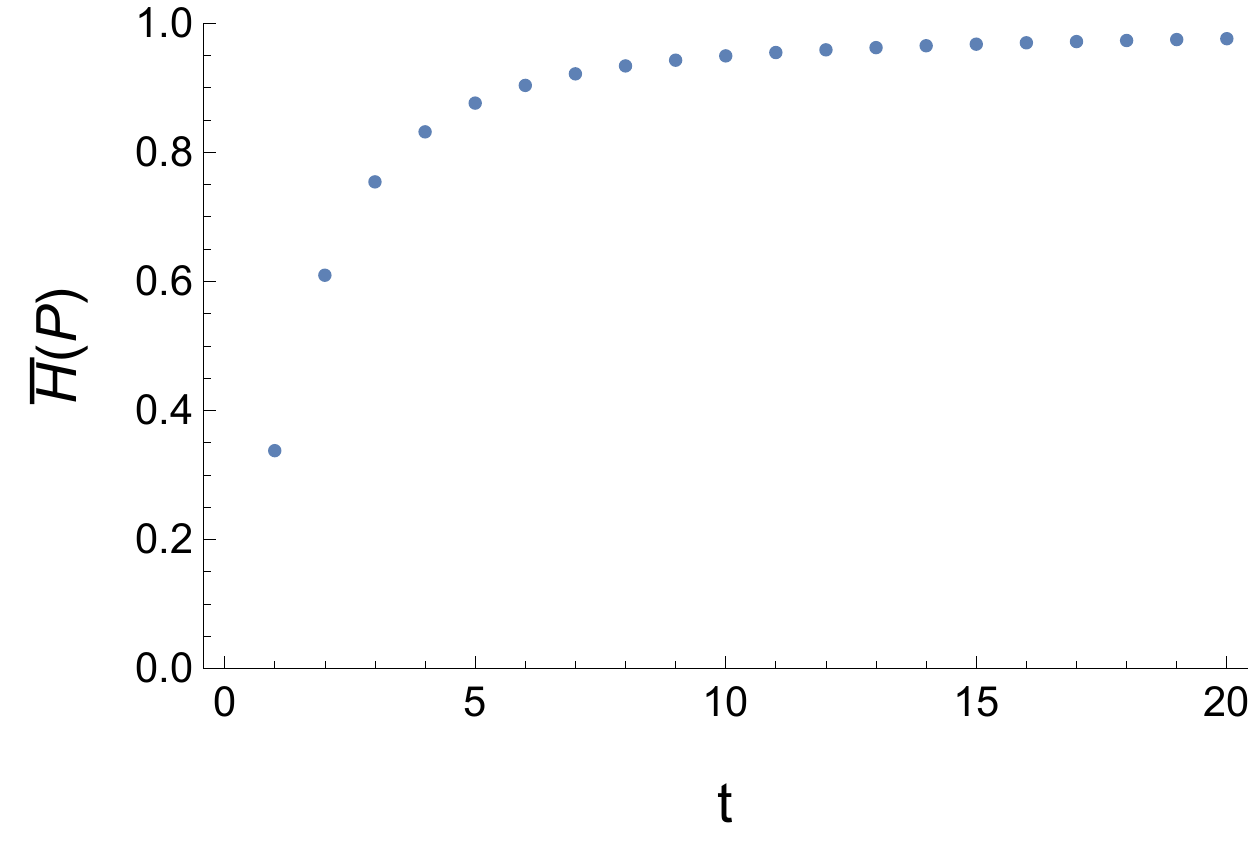}
		\caption{$\overline{\boldsymbol{H}}(P)$ (NM)}
	\end{subfigure}
	\begin{subfigure}[b]{0.3\textwidth}
		\includegraphics[width=\textwidth]{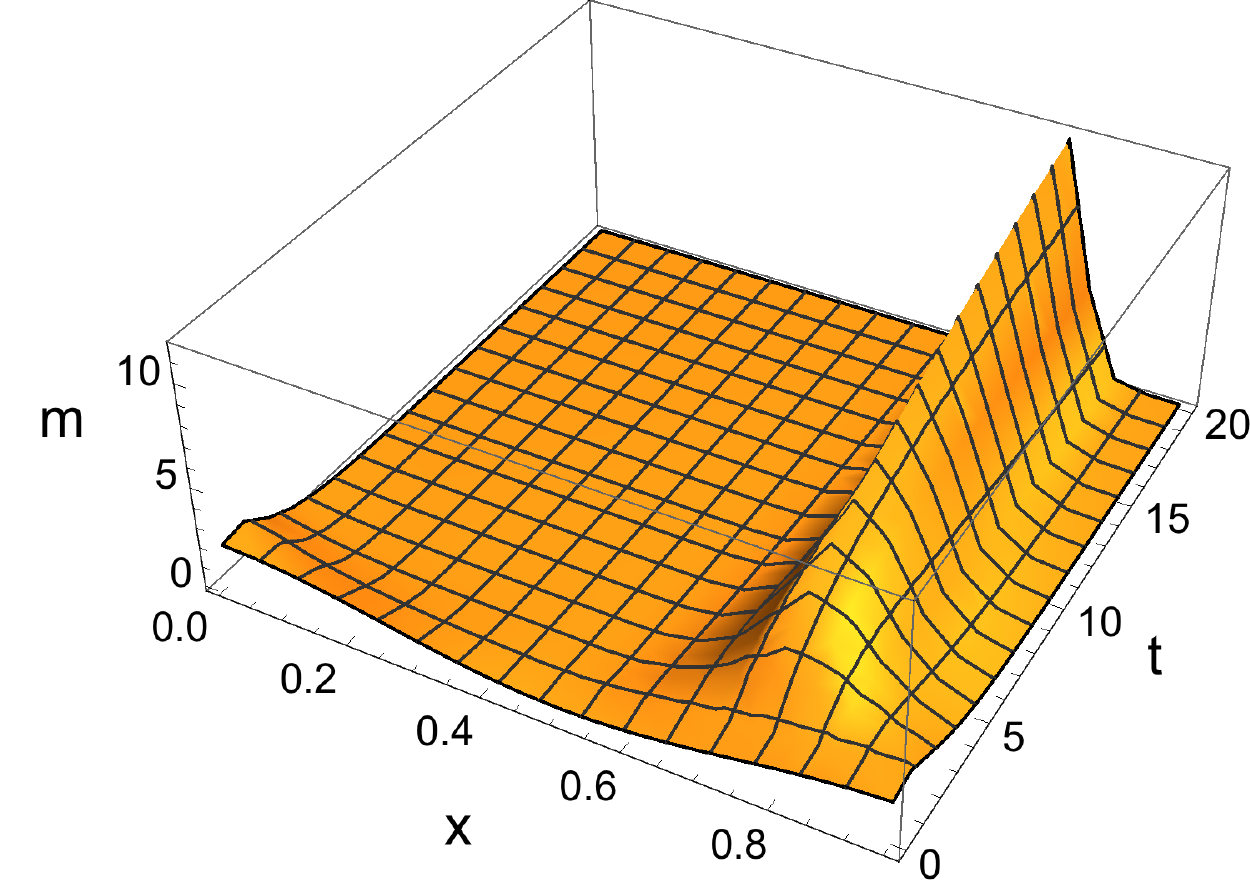}
		\caption{$\boldsymbol{m}$ (NM)}
	\end{subfigure}
	\begin{subfigure}[b]{0.3\textwidth}
		\includegraphics[width=\textwidth]{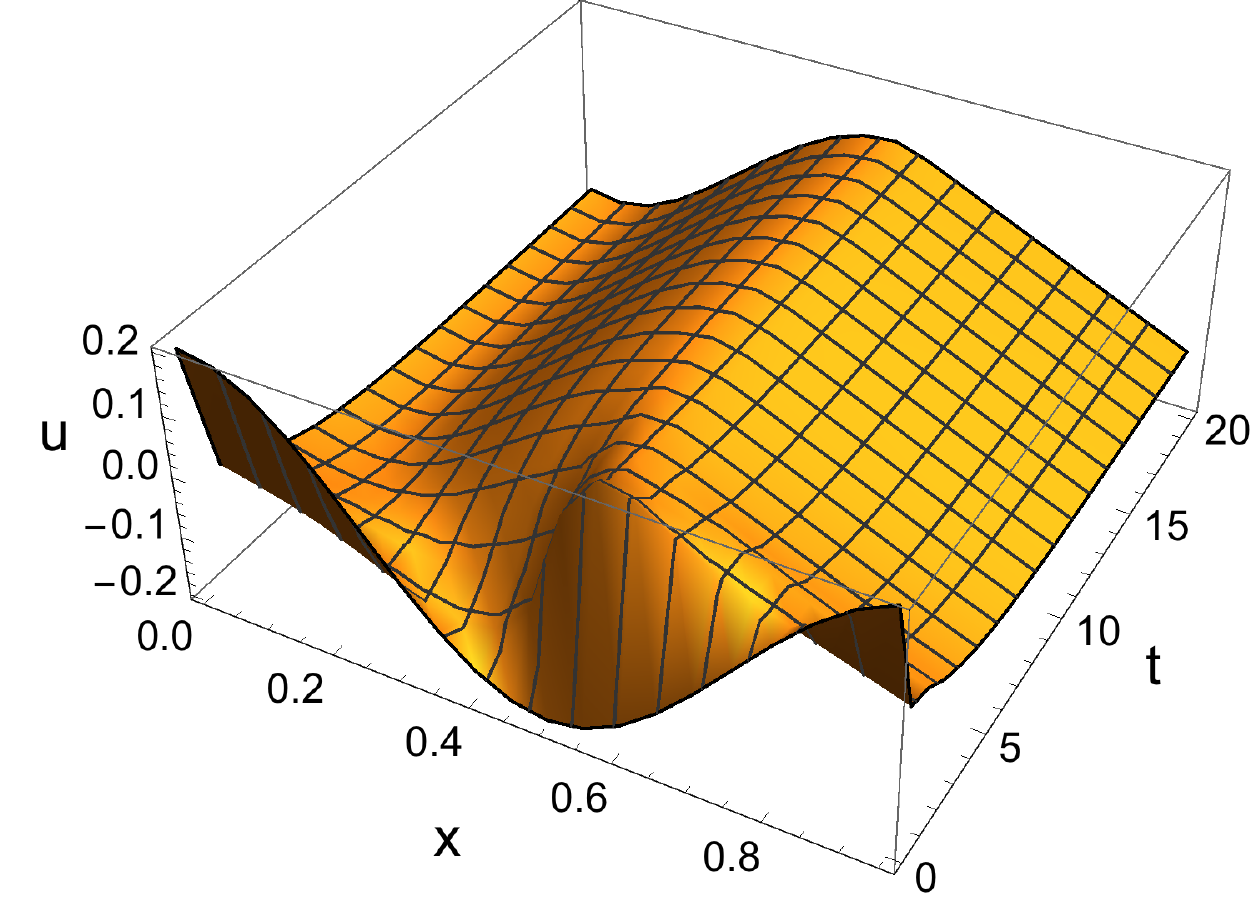}
		\caption{$\boldsymbol{u}$ (NM)}
	\end{subfigure}
	\caption{Numerical solutions of
		 Hessian Riemannian flow (HRF),
		\eqref{ApproxHessianMonotoneFlow}, and Newton's method (NM), \eqref{NewtonExplicitDiscretization}, for $k=10^5$.}
	\label{StabilityOfTheHessianflow}
\end{figure}

Though not stated explicitly in  \cite{falcone2008variational}, the algorithm described there computes both $\overline{H}$ and the projected Mather measure. However, as stated in that paper, that scheme becomes unstable if {$k$ is too large compared to the mesh size $N$.} Here, we show that the Hessian Riemannian flow and  Newton's method overcome this issue. 

To illustrate the stability of our methods, we consider $$H(x,p)=\frac{|p|^2}{2}-\sin(2\pi x).$$ For the implementation, we choose $P=0.5$, $k=10^5$, and $N=20$.  We use the initial value $(M^0,U^0)=(m_1^0,\dots,m_N^0,u_1^0,\dots,u_N^0)$, where $m_i^0=1+0.9\cos(2\pi x_i)$ and $u_i^0=0.2\cos(2\pi x_i)$. Figure  \ref{StabilityOfTheHessianflow} shows  the evolution of $\overline{\boldsymbol{H}}(P)$, $\boldsymbol{m}$ and $\boldsymbol{u}$ by the Hessian Riemannian flow and  Newton's method, which
illustrates that the Hessian Riemannian flow and  Newton's method are stable for nearly singular equations, corresponding to a large value of $k$.

\section{Conclusion}
\label{section7}
In this paper, {to calculate simultaneously the effective Hamiltonian and the Mather measure, we suggested two methods, the Hessian Riemannian flow and  Newton's method, to compute the approximated system in \eqref{ApproxiMFG}.} {We proved the convergence of the Hessian Riemannian flow in the continuous setting and gived both the existence and the  convergence of the Hessian Riemannian flow in the discrete setting.} We showed that this method guarantees the non-negativity of $m$. Besides, we pointed out the relation between the implicit discretization of the Hessian Riemannian flow and  Newton's method.  In our numerical experiments,  Newton's method is faster than the Hessian Riemannian flow. Both methods preserve the positivity of the Mather measure. Moreover, the Hessian Riemannian flow and  Newton's method seem to be stable for large $k$, a case where the variational method in \cite{falcone2008variational} faces difficulties. {Our experiments show that the solution to \eqref{ApproxiMFG} seems to converge as $k\rightarrow+\infty$ even when the values of $P$ are in areas where convergence has not yet been proven. We stress that our algorithm can also be used to solve stationary mean-field games. }

\def\cprime{$'$}

\end{document}